\theoremstyle{plain}
\newtheorem{theorem}{Theorem}[section]
\newtheorem{lemma}[theorem]{Lemma}
\newtheorem{corollary}[theorem]{Corollary}
\newtheorem*{assumption*}{Assumption}
\newtheorem{assumption}{Assumption}
\newtheorem{assumpt}{Assumption}
\theoremstyle{remark}
\newtheorem{remark}[theorem]{Remark}
\newtheorem{convention}[theorem]{Convention}
\numberwithin{equation}{section}
\def\a{\alpha}
\def\b{\beta}
\def\om{\omega}
\def\e{\varepsilon}
\renewcommand{\epsilon}{\varepsilon}
\def\g{\gamma}
\def\G{\Gamma}
\def\l{E}
\def\lmb{\lambda}
\def\p{\partial}
\def\D{\Delta}
\def\Om{\Omega}
\def\z{\zeta}
\def\vp{\varphi}
\def\vr{\varrho}
\def\d{\delta}
\def\L{\Lambda}
\def\H1{W^{1,2}}
\def\H2{W^{2,2}}
\def\di{\,d}
\def\iu{\mathrm{i}}
\def\cI{\mathcal{I}}
\newcommand{\EE}{\mathbb{E}}
\newcommand{\NN}{\mathds{N}}
\newcommand{\PP}{\mathbb{P}}
\newcommand{\RR}{\mathds{R}}
\newcommand{\ZZ}{\mathds{Z}}
\def\cL{\mathcal{L}}
\def\Op{\mathcal{H}}
\def\cS{\mathcal{S}}
\def\Dom{\mathfrak{D}}
\DeclareMathOperator{\supp}{supp}
\def\cB{\mathcal{B}}
\def\spec{\sigma}
\def\cA{\mathcal{A}}
\def\cR{\mathcal{R}}
\def\cT{\mathcal{T}}
\def\cP{\mathcal{P}}
\def\cU{\mathcal{U}}
\def\Ups{\Upsilon}
\newcommand{\vecfield}{\mathfrak{A}}
\renewcommand\e{\varepsilon}
\newcommand{\Tr}{\operatorname{Tr}}
\newcommand{\weyl}{\mathrm{Weyl}}
\newcommand{\drm}{\mathrm{d}}
\newcommand{\1}{\mathbf{1}}
\DeclareMathOperator{\RE}{Re}
\DeclareMathOperator{\dist}{dist}
\DeclareMathOperator{\IM}{Im}
\DeclareMathOperator{\sgn}{sign}
\DeclareMathOperator{\Div}{div}
\renewcommand{\leq}{\leqslant}
\renewcommand{\geq}{\geqslant}
\begin{document}

\allowdisplaybreaks

\title[]{Quantum Hamiltonians with weak random abstract perturbation. II. Localization in the expanded spectrum}
\author[Borisov, T\"aufer, and Veseli\'c]{Denis Borisov$^{1,2,3}$, Matthias T\"aufer$^{4}$, and  Ivan Veseli\'c$^{5}$}
\today

\keywords{random Hamiltonian, weak disorder, random geometry, quantum waveguide, low-lying spectrum, asymptotic analysis, Anderson localization}
\subjclass[2000]{35P15, 35C20, 60H25, 82B44}
\thanks{\jobname.tex}

\begin{abstract}
We consider multi-dimensional  Schr\"odinger operators  with a weak random perturbation
distributed in the cells of some periodic lattice.
In every cell the perturbation is described by the translate of a fixed abstract operator depending on a random variable.
The random variables, indexed by the lattice,  are assumed to be independent and identically distributed according to an absolutely continuous probability density.
A small global coupling constant tunes the strength of the perturbation.
We treat analogous random Hamiltonians defined on multi-dimensional  layers, as well.
For such models we determine the location of the almost sure spectrum and its dependence on the global coupling constant.
In this paper we concentrate on the case that the spectrum expands when the perturbation is switched on.
Furthermore, we derive a Wegner estimate and an initial length scale estimate, which together with Combes--Thomas estimate
allows to invoke the multi-scale analysis proof of localization.
We specify an energy region, including the bottom of the almost sure spectrum, which exhibits spectral and dynamical localization.
Due to our treatment of general, abstract perturbations our results apply at once to many interesting examples both known and new.
\end{abstract}

\maketitle

\begin{quote}
\begin{itemize}
\small
\item[1)] \emph{Department of Differential Equations, Institute of Mathematics with Computer Center, Ufa Scientific Center, Russian Academy of Sciences, Chernyshevsky. st.~112, Ufa, 450008,
Russia, Email: borisovdi@yandex.ru}

\item[2)] \emph{Faculty of Physics and Mathematics, Bashkir
State Pedagogical University, October rev. st.~3a, Ufa, 450000,
Russia}

\item[3)] \emph{Faculty of Science, University of Hradec Kr\'alov\'e, Rokitansk\'eho 62, 500 03, Hradec Kr\'alov\'e, Czech Republic}

\item[4)] \emph{School of Mathematical Sciences, Queen Mary University of London, United Kingdom}

\item[5)] \emph{Fakult\"at f\"ur Mathematik, Technische Universit\"at Dortmund,
44227 Dortmund, Germany}

\end{itemize}
\end{quote}

\section{Introduction}
Anderson localization refers to the physical phenomenon that waves do not propagate in certain types of disordered media.
The occurrence of this phenomenon depends on the energy associated with the wave and the specifics of the randomness present in the medium.
It was first studied in the context of propagation or diffusion of electron wavepackets in quantum mechanical models for condensed matter,
but can also occur in models for the propagation of classical waves.
More specifically, in the mathematical literature, \emph{spectral localization} in a certain energy region $I\subset \RR$ means that  the Hamiltonian governing the equation of motion of the waves has neither singular continuous nor absolutely continuous spectrum in $I$.
Thus, any spectrum in $I$ is pure point spectrum.
The term \emph{Anderson localization} describes the situation of spectral localization in $I$ where all eigenfunctions with eigenvalues in $I$ are exponentially decaying, even though this additional requirement is usually no restriction, in the sense that it occurs in most cases when spectral localization does.
Furthermore, there is the concept of \emph{dynamical localization} referring to the phenomenon that space-localized initial states do not spread, but remain localized uniformly in time when evolving as determined by the
Hamiltonian.
In most situations, whenever one can prove spectral localization one can prove dynamical localization as well. Concerning the other direction, spectral localization is a direct consequence of dynamical localization.
For a class of random Schr\"odinger operators the equivalence of these two notions has been proved in \cite{GerminetK-04}.
In order to understand the challenges posed by the models considered in this work let us emphasize that the random perturbation operators considered in \cite{GerminetK-04} are linear and monotone in the random variables and consist of multiplication operators whereas we shall consider a {more general} class of operators.

As a motivation for our results, let us start by noting that strictly speaking, the above mentioned notions of localization, which are ubiquitous in the literature, do not exclude the trivial situation where $I$ is a subset of the resolvent set.
At first glance, this seems to be a technical subtlety.
On the one hand, methods to exclude any spectrum in an interval $I$ are typically much simpler than methods to prove that the entire spectrum in $I$ is pure point.
On the other hand, proofs of localization often only apply in certain specific energy intervals, which are usually near the edges of the spectrum of a non-random operator and which are sometimes subject to technical conditions.
Showing that this region of localization indeed contains spectrum is a non-trivial task.
We have to perform a detailed analysis of the spectrum as a set, as well as of the region of localization, in order to show that their intersection is non-empty -- at least for certain parameter choices.

The operators which we consider in this work are random perturbations of a Schr\"odinger operator.
The perturbation however is general. It may be a differential, integral, multiplication or other operator, as long as it satisfies certain reasonable regularity conditions.
Furthermore the functional dependence on the random parameters may be non-linear and non-monotone.

Such perturbations are more difficult to analyse than the original Anderson or alloy type models with fixed-sign single site potentials, for which the theory of localization has been developed first.
Consequently, in subsequent developments, extensions of the methods have been invented in order to be able to deal with non-monotone, non-linear and non-potential perturbations.
The first step in this direction was the study of alloy type potentials with sign-changing single site potentials, as carried out in for instance in
\cite{Klopp-95a}, \cite{Stolz-00}, \cite{Veselic-01}, \cite{Klopp-02c}, \cite{Veselic-02a},
\cite{HislopK-02}, \cite{KostrykinV-06}, \cite{KloppN-09a}, \cite{Veselic-10a}.
In fact, the Wegner estimate which we prove in this paper relies on the vector field method of \cite{Klopp-95a,HislopK-02}.
More recently also the discrete analogue of this model was studied in
\cite{ElgartTV-10}, \cite{Veselic-10b}, \cite{ElgartTV-11},
\cite{TautenhahnV-10}, \cite{Krueger-12}, \cite{CaoE-12},
\cite{ElgartSS-14}, \cite{TautenhahnV-15}.
Another relevant model without direct monotonicity is a random potential given by a Gaussian stochastic
field with sign-changing covariance function,
c.f.~\cite{HupferLMW-01a}, \cite{Ueki-04}, \cite{Veselic-11}, \cite{Tautenhahn-19}.
Electromagnetic Schr\"odinger operators with random magnetic field
\cite{Ueki-94}, \cite{Ueki-00}, \cite{HislopK-02}, \cite{KloppNNN-03}, \cite{Ueki-08},
\cite{Bourgain-09},
\cite{ErdoesH-12a}, \cite{ErdoesH-12c}, \cite{ErdoesH-12b},
as well as Laplace-Beltrami operators with random metrics \cite{LenzPV-04}, \cite{LenzPPV-08},
\cite{LenzPPV-09} exhibit a non-monotonous parameter dependence which
affects the higher order terms of the differential operator.
Another model where geometric randomness enters naturally  is the random displacement model, cf.{} e.g.{}
\cite{Klopp-93,BakerLS-08,GhribiK-10,KloppLNS-12}. This is also the case for random waveguide Laplacians
which we discuss in the next paragraph.

The paper builds upon, improves and complements several earlier papers of two of us (partially with other coauthors).
In \cite{BorisovHEV-16} and \cite{BorisovHEV-18}
we have analysed what type of expansion rate for the spectrum is possible
for weak disorder random Hamiltonians.
For a large class of random operators we were able to show that the expansion is either
linear or quadratic, i.~e.~that other rates do not occur.
In the present paper we cover both of these two scenarios.
Initial length scale estimates for randomly bent and randomly curved  waveguides
and more general geometric perturbations have been proved in
\cite{BorisovV-11}, \cite{BorisovV-13}, and \cite{BorisovGV-16}
as well as  \cite{Borisov-17}, respectively.
However none of these paper proved Wegner estimates, which we provide here.

Let us list the main achievements of the paper together with the theorems where they are spelled out and the sections where they are proven.

\begin{enumerate}[(i)]
  \item
A characterisation of the almost sure spectrum as a set and its minimum, for a class of general weak disorder Hamiltonians.
(Theorem \ref{th2.1}, proof in Section~\ref{s:minimum}.)
  \item
Characterisation of linear and quadratic movement of the spectral minimum as a function of the weak disorder parameter.
(Corollary \ref{c:expansion})
  \item
An improved variational lower bound for the principal eigenvalue of Hamiltonians on large, finite segments
or cubes: The deviation of the random parameter configuration from the optimal one determines
explicitly how much the eigenvalue is lifted.
(Theorems \ref{thm:lower_bound_ground_state} and \ref{th4.1}, proof in Section~\ref{s:initial}.)
  \item
A Wegner estimate for general weak disorder random Hamiltonians
specifying an energy $\times$ disorder regime where it holds.
(Theorem~\ref{thm:Wegner_intro} and \ref{thm:Wegner}, proof in Section~\ref{s:Wegner}.)
  \item
Two results with explicit estimates on the location and size of a localization regime in the spectrum.
One of them corresponds to models with linear shift of the spectral minimum, the other
to models with a quadratic one, with respect to the small disorder parameter.
(Theorem~\ref{thm:localization_C1} and \ref{thm:localization_C2}.)
  \item
An broad list of examples which are covered by the general class or random Hamiltonians which we consider,
presented in Section~\ref{s:entire-space} and \ref{s:examples}.
\end{enumerate}

One of the complementary motivations of this paper is to understand \cite[Theorem 6.1]{HislopK-02}, where a Wegner estimate in the weak disorder regime is presented.
While we were able to generalize the proof to our situation,
we learned that it has to be complemented by a detailed analysis of the region in the energy$\times$disorder plane where the Wegner bound holds and an analysis about the expansion of the spectrum.
For certain models, like weak disorder magnetic fields
with no magnetic field in the unperturbed operator, the considered method of proof yields a Wegner estimate only in the resolvent set!
We shall discuss this in more detail for the specific example of a compactly supported magnetic field single site potential in  Section~\ref{ss:simple-magnetic} and Appendix \ref{ap:CaseIII}.

An important technical assumption in our models is that each single site perturbation acts on functions supported in one individual
periodicity cell. Overlapping single site perturbations would lead to additional
higher order terms, which would change the perturbation analysis which we carry out.

The proof of our theorems on localization is split into two parts.
First, one identifies an energy interval $I$, where it is possible to prove an appropriate Wegner estimate and an initial length scale estimate.
Thus in this region one can perform the multi-scale analysis.
Consequently, the spectrum in this energy region must be pure point spectrum with exponentially decaying eigenfunctions.
However, this does note yet exclude the situation that $I$ is part of the resolvent set.
For this reason we prove separately in Section~\ref{sec:MSA} that a nonempty subset $I_0\subset I$ belongs to the almost sure spectrum $\Sigma_\e$.
This is not trivial, because one has to analyse how fast the spectrum expands with emerging disorder $\e$.
At the same time, the method of proof we use for the Wegner estimate requires that one has a certain security distance away from the spectrum of the unperturbed operator $\Op^0$.
Thus, $I_0\subset \Sigma_\e$ is a set which is close, but not too close to the unperturbed spectrum $\Sigma_0$.

\section{Definitions and results for operators on multidimensional layers}
\label{s:results}

We shall first explain the abstract framework and our main results for operators on multidimensional layers before treating operators on $\RR^n$ in Section~\ref{s:entire-space}.
We fix a dimension $n \in \NN$ and write $x'=(x_1,\ldots,x_n)$ and $x=(x',x_{n+1})$ for Cartesian coordinates in $\RR^n$ and $\RR^{n+1}$, respectively.
For $d > 0$, we define $\Pi:=\{x:\, 0<x_{n+1}<d\} \subset \RR^{n+1}$, the multidimensional layer of width $d$.
Let $\G$ be a periodic lattice in $\RR^n$ with basis $e_1$, \ldots, $e_n$.
Its elementary cell is $\square':=\{x': x'=\sum\limits_{i=1}^{n} a_i e_i,\; a_i\in(0,1)\}$ and we also define $\square:=\square'\times(0,d) \subset \Pi$.

Let $\Pi\ni x\mapsto V_0(x)=V_0(x_{n+1})\in \RR$ be a measurable and bounded
potential depending only on the transversal variable $x_{n+1}$.
We consider the self-adjoint operator $\Op^0:=-\D +V_0$ in $L^2(\Pi)$ with
either Dirichlet or Neumann boundary conditions on $\p\Pi$, abbreviated by
\begin{equation}\label{eq:boundary-condition-B}
\cB u=0 \quad \text{where}\quad \cB u=u\quad \text{or}\quad \cB u=\frac{\p u}{\p x_{n+1}}.
\end{equation}
It is possible to choose different types of boundary conditions on the two (``upper'' and ``lower'') components of $\p\Pi$. The domain of this operator is
\begin{equation}\label{2.0}
\Dom(\Op^0)
:=\{u\in {\H2}(\Pi):\, \text{(\ref{eq:boundary-condition-B}) is satisfied on $\p\Pi$}\}.
\end{equation}

Let $T > 0$ be a fixed, positive parameter.
We introduce a family of linear operators $\cL(t)$, $t\in[-T,T]$.
These operators are assumed to act from $\H2(\square)$ into $L^2(\square)$ and are defined as
\begin{equation*}
\cL(t):=t \cL_1 + t^2 \cL_2 + t^3 \cL_3(t),
\end{equation*}
where $\cL_i\colon {\H2}(\square)\to L^2(\square)$ are bounded symmetric linear operators.
The operator $\cL_3(t)$ and its derivative with respect to $t$ are assumed to be bounded uniformly in $t\in[-T,T]$.
Recall that $\H2(\square)$ and $L^2(\square)$ are canonically embedded into $\H2(\Pi)$ and $L^2(\Pi)$, respectively.
For each $u\in H^2(\Pi)$, the restriction of this function on $\square$ is an element of $u\in H^2(\square)$.
Hence, the function $\cL_i u$ is a well-defined element of $L^2(\square)$ and
we may extend this function by zero in $\Pi\setminus\square$.
After the extension this function is an element of $L^2(\Pi)$.
In view of the described continuation,
in what follows, we regard the operators $\cL_i$ as unbounded operators in $L^2(\Pi)$ with domain $\H2(\Pi)$.

Let $\{ \xi_k \}_{k \in\ZZ^n}$ be a sequence of numbers with values in $[-1,1]$,
and denote by $\cS(k)$ the shift operator in $L^2(\Pi)$, i.e. $(\cS(k)u)(x)=u(x'+k,x_{n+1})$.
For $\e \in (0, T]$, we define the unbounded operator
\begin{equation}\label{2.2}
\Op^\e(\xi):=-\D +V_0+ \cL^\e(\xi),\quad \cL^\e(\xi):=\sum\limits_{k\in\G} \cS(k) \cL(\e \xi_k) \cS(-k),
\end{equation}
on the domain $\Dom(\Op^\e(\xi))=\Dom(\Op^0) \subset L^2(\Pi)$.

By $\|\cdot\|_{X\to Y}$ we denote the norm of a bounded operator acting from a Banach space $X$ into a Banach space $Y$.

\begin{convention}
To ensure a number of further properties of the operator $\Op^\e(\xi)$
it will be necessary to chose $\e$ from a smaller interval than $(0,T]$.
There is a (small) number $0<t_0<1$ depending only on
$n$, $\Gamma$, $\cB$, $V_0$,
$ \| \cL_1\|_{{\H2}(\square)\to L^2(\square)}$,
$ \| \cL_2\|_{{\H2}(\square)\to L^2(\square)}$,
$ \sup\limits_{-T\leq t\leq T} \| \cL_3(t)\|_{{\H2}(\square)\to L^2(\square)}$,
$ \sup\limits_{-T\leq t\leq T} \| \frac{\partial}{\partial t} \cL_3(t)\|_{{\H2}(\square)\to L^2(\square)}$
and the measure $\mu$ (which will be defined shortly) such that all results in
the paper  hold for $\e \in (0,t_0]$.
In proofs, the specific values of $t_0$ may change from line to line,
nevertheless we shall \emph{not} write each time \emph{possibly decreasing $t_0$ further}.
The main point is that $t_0$ does not depend on $\epsilon$ nor on the scale $N$ appearing below.
\end{convention}

If $t_0 > 0$ is sufficiently small, then
for every $\e \in (0, t_0]$ the operator $\cL^\e$ is
relatively bounded with respect to the Laplacian on $\Dom(\Op^\e(\xi))$ with relative bound smaller than one.
As a consequence, the operator $\Op^\e(\xi)$ is lower semi-bounded and, by the Kato-Rellich theorem, self-adjoint.

Let now $\om:=\{\om_k\}_{k\in\G}$ be a sequence of independent and identically distributed random variables on the probability space $(\Omega, \PP)$, where $\Om:=\times_{k\in\G} [-1,1]$, and $\PP=\bigotimes_{k\in\G} \mu$.
The probability measure $\mu$ corresponding to each $\omega_j$ is assumed to have an absolutely continuous probability density $h_0$ with respect to the Lebesgue measure and to satisfy
\begin{equation*}
-1\leqslant b=\min\supp\mu<\max\supp\mu=1.
\end{equation*}
We shall write $\EE$ for the expectation with respect to $\PP$.
Throughout the paper, the symbol $\Op^\e(\om)$ will denote a realization of our random operator corresponding to the configuration $\om\in \Omega$.
The notation $\Op^\e(\xi)$ will be used if we study properties which are valid for every single $\xi\in \Omega$.

We now define the auxiliary operator
\begin{equation*}
\Op^\d_\square:=-\Delta + V_0|_{\square} +\cL(\d),\quad \d\in[-t_0,t_0],
\end{equation*}
in $L^2(\square)$ with domain consisting of all functions in the Sobolev space $\H2(\square)$ satisfying the boundary condition (\ref{eq:boundary-condition-B}) on $\p\square\cap\p\Pi$ and periodic boundary conditions on  $\g:=\p\square\setminus\p\Pi$.
The operator $\Op^\d_\square$ is again self-adjoint.
For $\delta \in [- t_0, t_0]$, we 
denote by $\L^\d$ the lowest eigenvalue of the operator $\Op^\d_\square$ and by $\Psi^\d$ an associated, appropriately normalized eigenfunction.

In Lemma~\ref{lem:Taylor}, we prove that the eigenvalue $\L^\d$ and the eigenfunction $\Psi^\d$ {are, in a specific sense,} twice differentiable as functions of $\d$ in a neighbourhood of $\d = 0$ with Taylor expansions
\begin{equation}
 \label{eq:Taylor}
 \L^\d
 =
 \L_0 + \d \L_1 + \d^2 \L_2 + O(\d^3),
 \quad
 \text{and}
 \quad
 \Psi^\d
 =
 \Psi_0 + \d \Psi_1 + \d^2 \Psi_2 + O(\d^3),
\end{equation}
where $\Lambda_i$ and $\Psi_i$, $i \in \{0,1,2\}$, are explicitly given, cf. Lemma~\ref{lem:Taylor} for details.

In the present paper we cover the following two scenarios:
\begin{enumerate}\def\theenumi{\textbf{Case \Roman{enumi}}}
\item \label{C1}  The constant $\L_1$ is strictly negative.

\item \label{C2} The constant $\L_1$ is zero and
\begin{equation}\label{eq:definition-eta}
b>-1,\quad
 \eta:=-(b+1) \L_2 + (1-b)\big( \Phi_1-\Psi_1, \cL_1\Psi_0\big)_{L^2(\square)}>0.
\end{equation}
where $\Phi_1\in \H2(\square)$ is the unique solution to the problem
\begin{equation}\label{2.12b}
(-\Delta+V_0-\L_0)\Phi_1=-\cL_1\Psi_0\quad \text{in}\quad \square,\qquad \cB\Phi_1=0\quad\text{on}\quad \p\square\cap\p\Pi,\qquad \frac{\p\Phi_1}{\p\nu}=0\quad\text{on}\quad\g,
\end{equation}
orthogonal to $\Psi_0$ in $L^2(\square)$.
\end{enumerate}

\begin{remark}
  \label{r:CaseII}
  \begin{enumerate}
    \item
    \label{rem2.2}
    Note that \eqref{C2} implies that the vector $\cL_1 \Psi_0$ is orthogonal to $\Psi_0$ itself,
    cf.~Lemma~\ref{lem:Taylor},
    the unique ground state of $\Op^0_\square=-\Delta+V_0$.
    Hence, $\cL_1 \Psi_0$ is orthogonal to the entire spectral subspace of $\Lambda_0$.
    The operator $\Op^0_\square$ commutes with this subspace and is invertible on the orthogonal complement.
    Thus, there is indeed a unique solution $\Phi_1$ to (\ref{2.12b}).
    \item
    \label{rem2.3}
    It was shown in \cite{Borisov-17} that
    \begin{equation*}
    \big(\RE (\Phi_1-\Psi_1), \RE \cL_1\Psi_0\big)_{L^2(\square)}
    =
    \big(\Phi_1-\Psi_1, \cL_1\Psi_0\big)_{L^2(\square)},
    \end{equation*}
    thus the second inequality in condition \eqref{eq:definition-eta} can be rewritten as
    \begin{equation*}
    \eta:=-(b+1) \L_2 + (1-b)\big(\RE (\Phi_1-\Psi_1), \RE \cL_1\Psi_0\big)_{L^2(\square)}>0,
    \end{equation*}
    \item
    \label{rem2.4}
    Condition (\ref{eq:definition-eta}) is in some sense equivalent to $\L_2<0$. Namely, it was shown  in \cite{Borisov-17} that
    \begin{equation*}
    \big( \RE(\Phi_1-\Psi_1), \RE\cL_1\Psi_0\big)_{L^2(\square)} \leqslant 0.
    \end{equation*}
    Hence, the second term in the definition of $\eta$ is negative and we can satisfy (\ref{eq:definition-eta}) only if $\L_2<0$.
    On the other hand, if $\L_2<0$, then condition (\ref{eq:definition-eta}) is satisfied at least for $b$ close enough to $1$.
    \item In a companion paper we plan to discuss the case $\Lambda_1>0$.
    Currently we do not know how to treat the cases $\Lambda_1=0$, $\Lambda_2<0$ and $b=-1$, $\Lambda_1>0, \Lambda_2>0$.
  \end{enumerate}
\end{remark}

In the situation of~\eqref{C1} and~\eqref{C2} and provided that 
we establish in Lemma~\ref{lm:minimal-value} the following minimization property
\begin{equation*}
\forall \ \e \in (0, t_0]: \quad
\min\limits_{\d\in[\e b,\e]} \L^\d= \L^{\e}.
\end{equation*}
We 
use the corresponding ground state $\Psi^{\epsilon}$ to define a function $\rho^{\epsilon}$ on the lateral boundary $\g$, given by
\begin{equation*}
\rho^{\epsilon}:=\frac{1}{\Psi^{\epsilon}} \frac{\p\Psi^{\epsilon}}{\p\nu},
\end{equation*}
where $\nu$ is the outward normal vector.
Note that a priori, one might have to verify whether this definition of $\rho^{\epsilon}$ is admissible.
For instance, one needs to exclude that $\Psi^{\epsilon}$ vanishes on a nonempty open subset of $\gamma$.
The issue of well-definedness of $\rho^{\epsilon}$ will be addressed by Assumption~\ref{A1} below.
We shall also assume that for all $\e \in (0, t_0]$, the boundary term $\rho^{\epsilon}$ can be uniformly approximated by polynomial expressions in boundary terms derived from $\Psi_0$, $\Psi_1$, and $\Psi_2$, that appear in the Taylor expansion of Lemma \ref{lem:Taylor}.
This is made precise in the following assumption, that is satisfied for many examples as discussed in Section~\ref{s:examples}.

\setcounter{assumption}{12}
\begin{assumption}
  \label{A1}
  For all $\e \in [0,t_0]$ the function $\rho^{\epsilon}$ is piecewise continuous on $\g$.
  Furthermore,
  \begin{itemize}
   \item
   If \eqref{C1} holds, then the asymptotic identity
    \begin{equation}\label{2.12c}
    \sup\limits_{\overline{\g}} \big|\rho^{\epsilon}-\epsilon\rho_1\big|=O(\e^2)
    \end{equation}
    holds true, where
    \begin{equation}\label{2.12d}
    \rho_1:=\frac{1}{\Psi_0} \frac{\p\Psi_1}{\p\nu}
    \end{equation}
   is a piecewise continuous function on $\g$.
   \item
    If \eqref{C2} holds, then the asymptotic identity
    \begin{equation*}
    \sup\limits_{\overline{\g}} \big|\rho^{\epsilon}-\epsilon\rho_1-\epsilon^2\rho_2\big|=O(\e^3)
    \end{equation*}
    holds true, where $\rho_1$ is given by (\ref{2.12d}),
    \begin{equation*}
    \rho_2:=\frac{1}{\Psi_0} \frac{\p\Psi_2}{\p\nu} - \frac{\Psi_1}{\Psi_0^2} \frac{\p\Psi_1}{\p\nu},
    \end{equation*}
    and
    the functions $\rho_1$, $\rho_2$ are piecewise continuous on $\g$.
   \end{itemize}
\end{assumption}

Since $\Psi^{\epsilon}$ satisfies periodic boundary conditions, the function $\rho^{\epsilon}$ inherits this periodicity.
More precisely, after periodically extending $\rho^{\epsilon}$ from $\gamma$ to $\bigoplus_{k \in \Gamma} S(k) \g $ the values of $\rho^{\epsilon}$ on two touching boundaries of elementary cells of $\Gamma$ will sum up to zero.
The function $\rho^{\epsilon}$ will be used to define restrictions of the operator $\Op^\e(\om)$ onto finite parts of the layer $\Pi$ with so-called \emph{Mezincescu boundary conditions}.
These are a special choice of Robin boundary conditions, which ensures that the ground state of finite volume restrictions coincides with the infimum of the spectrum of the infinite volume operator.

Our first result describes the location of the spectrum of $ \spec(\Op^\e(\om))$.

\begin{theorem}\label{th2.1}
Let $t_0 > 0$ be sufficiently small.
For all $\e \in [0, t_0]$ there exists a closed set $\Sigma_\e\subset\RR$ such that
\begin{equation*}
\spec(\Op^\e(\om))=\Sigma_\e\quad\PP-a.s.
\end{equation*}
This set $\Sigma_\e$ is equal to the closure of the union of spectra for all periodic realizations of $\Op^\e(\cdot)$:
\begin{equation}\label{2.19}
\Sigma_\e=\overline{\bigcup\limits_{N\in \NN} \quad \bigcup\limits_{\xi \ \text{is $2^N \G$-periodic}} \spec\big(\Op^\e(\xi)\big)},
\end{equation}
where the second union is taken over all sequences $\xi: \G\to\supp \mu $, which are periodic with respect to   the sublattice $2^N \G:=\{2^N q:\, q\in\G\}$.

In particular, if we are in \eqref{C1} or in \eqref{C2} and Assumption~\ref{A1} holds, then, assuming $t_0 > 0$ sufficiently small,
we have
\begin{equation}\label{2.20}
\min\Sigma_\e=\L^{\epsilon}.
\end{equation}
\end{theorem}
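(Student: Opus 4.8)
The plan is to prove the two assertions of Theorem~\ref{th2.1} separately. For the first assertion --- the existence of a deterministic set $\Sigma_\e$ with $\spec(\Op^\e(\om))=\Sigma_\e$ almost surely, together with the characterisation \eqref{2.19} via periodic realisations --- I would follow the by-now standard ergodicity argument for random operators of alloy type. First I would check that $\Op^\e(\cdot)$ is an ergodic family: the lattice $\G$ acts on $\Omega=\times_{k\in\G}[-1,1]$ by shifts, this action is ergodic because the $\om_k$ are i.i.d., and the covariance relation $\cS(q)\Op^\e(\om)\cS(-q)=\Op^\e(\tau_q\om)$ holds for $q\in\G$ by construction of $\cL^\e(\xi)=\sum_{k\in\G}\cS(k)\cL(\e\xi_k)\cS(-k)$. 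Then the Pastur--Kirsch--Martinelli theorem gives the existence of $\Sigma_\e$ and of the individual spectral components as deterministic sets. For the characterisation \eqref{2.19} I would use that periodic configurations are dense in $\supp\mu^{\otimes\G}$ in the product topology, that $\Op^\e(\xi)$ depends strong-resolvent continuously on $\xi$ (uniform in $\xi$ relative boundedness of $\cL^\e(\xi)$, which holds once $t_0$ is small, is the key input here), so that spectra of periodic approximants both approximate and are approximated by the almost sure spectrum; the inclusion ``$\supseteq$'' uses that any periodic $\xi$ can be realised as a typical tail event, and ``$\subseteq$'' uses a Weyl sequence localised in a large box together with a periodic configuration agreeing with $\om$ on that box.

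For the second assertion, \eqref{2.20}, the inequality $\min\Sigma_\e\le\L^\e$ is the easy half: take the periodic configuration $\xi\equiv\e$ (constant), i.e. $\xi_k=\e$ for all $k$; then $\Op^\e(\xi)$ is $\G$-periodic and, by Floquet--Bloch theory, the bottom of its spectrum is at most the lowest eigenvalue of the fibre operator at quasimomentum zero, which --- because $V_0$ depends only on $x_{n+1}$ and the perturbation $\cL(\e)$ is the same in every cell --- is exactly $\L^\e$, the lowest eigenvalue of $\Op^\e_\square$ with periodic boundary conditions on $\g$. Hence $\L^\e\in\spec(\Op^\e(\xi))\subseteq\Sigma_\e$ by \eqref{2.19}, so $\min\Sigma_\e\le\L^\e$.

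The reverse inequality $\min\Sigma_\e\ge\L^\e$ is the main obstacle and is where the hypotheses \eqref{C1}/\eqref{C2} and Assumption~\ref{A1} enter. The idea is a Dirichlet--Neumann bracketing / quadratic form argument: for any $\xi\in\Omega$ and any realisation $\om$, decompose $\Pi=\bigcup_{k\in\G}\cS(k)\square$ and bound the quadratic form of $\Op^\e(\om)$ from below by the direct sum over cells of the cell operators $\Op^{\e\om_k}_\square$ equipped with the Mezincescu--Robin boundary conditions on $\g$ built from $\rho^\e$; the crucial point of Mezincescu boundary conditions is precisely that this bracketing is lossless at the level of the ground state, i.e. the lowest eigenvalue of the infinite operator equals $\inf_k\L^{\e\om_k}$, and the admissibility and the approximation \eqref{2.12c} of $\rho^\e$ (guaranteed by Assumption~\ref{A1}) are what make these boundary conditions well-defined and consistent across touching cell faces (recall the values sum to zero on touching boundaries). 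Since $\e\om_k\in[\e b,\e]$ for every $k$, it remains to invoke Lemma~\ref{lm:minimal-value}, which under \eqref{C1} or \eqref{C2} states $\min_{\d\in[\e b,\e]}\L^\d=\L^\e$; therefore $\L^{\e\om_k}\ge\L^\e$ for all $k$, and the bracketing yields $\inf\spec(\Op^\e(\om))\ge\L^\e$ almost surely, hence $\min\Sigma_\e\ge\L^\e$. Combining the two inequalities gives \eqref{2.20}. I expect the delicate technical points to be the rigorous justification of the Mezincescu bracketing for the abstract (non-multiplication, possibly differential) perturbation $\cL(\d)$ --- in particular verifying that the form domains match up and that $\rho^\e$ being merely piecewise continuous still yields a well-defined self-adjoint Robin-type realisation on each cell --- and the reduction of the periodic fibre problem to the single-cell operator $\Op^\e_\square$.
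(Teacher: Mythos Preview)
Your proposal is correct and follows essentially the same approach as the paper: the first assertion is delegated to standard ergodicity results (the paper cites \cite{BorisovHEV-18}), the upper bound $\min\Sigma_\e\le\L^\e$ comes from the constant periodic configuration together with a Rayleigh quotient, and the lower bound comes from Mezincescu bracketing on single cells combined with Lemma~\ref{lm:minimal-value}, exactly as in Section~\ref{ss-identify_minimum}. One minor slip: the constant configuration should be $\xi_k=1$ (so that $\e\xi_k=\e$), not ``$\xi_k=\e$'', since the periodic configurations in \eqref{2.19} must take values in $\supp\mu$ and $1=\max\supp\mu$.
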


A consequence of this theorem and Lemmata \ref{lem:Taylor} and \ref{lm:minimal-value} is
\begin{corollary}\label{c:expansion}
Let $t_0 > 0$ be sufficiently small.
If \eqref{C1} holds, then for all $\e \in (0, t_0]$
\[
  \min\Sigma_\e =\L^\e\leq \Lambda_0 - c \e, \quad \text{where}\quad c = - \Lambda_1/2>0 .
\]
If \eqref{C2} holds, then for all $\e \in(0,t_0]$ we have
\[
  \min\Sigma_\e=\L^\e \leq \Lambda_0 - c \e^2,\quad \text{where}\quad c = - \Lambda_2/2>0 .
\]
\end{corollary}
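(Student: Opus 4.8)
The plan is to derive Corollary~\ref{c:expansion} as a direct consequence of \eqref{2.20} in Theorem~\ref{th2.1} together with the minimisation identity of Lemma~\ref{lm:minimal-value} and the Taylor expansion \eqref{eq:Taylor} of Lemma~\ref{lem:Taylor}. By Theorem~\ref{th2.1}, under \eqref{C1} or under \eqref{C2} with Assumption~\ref{A1}, we have $\min\Sigma_\e=\L^\e$, so everything reduces to estimating $\L^\e$ from above. By Lemma~\ref{lm:minimal-value} (valid for $\e\in(0,t_0]$), $\L^\e=\min_{\d\in[\e b,\e]}\L^\d$, and in particular $\L^\e\le\L^\d$ for every admissible $\d$; taking $\d=\e$ is of course trivial, but the point is that $\L^\e$ is the minimiser, so it suffices to produce \emph{any} good upper bound on $\min_{\d\in[\e b,\e]}\L^\d$.

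First, in \textbf{Case I}, I would insert $\d=\e$ into the Taylor expansion \eqref{eq:Taylor}: $\L^\e=\L_0+\e\L_1+\e^2\L_2+O(\e^3)$. Since $\L_1<0$ by hypothesis, for $t_0$ small enough the linear term dominates the quadratic and cubic corrections, so $\L^\e\le\L_0+\tfrac12\e\L_1=\L_0-c\e$ with $c=-\L_1/2>0$, provided $\e\le t_0$ and $t_0$ is chosen (depending only on $\L_1,\L_2$ and the uniform bound on the $O(\e^3)$ term, i.e.\ ultimately on the data listed in the Convention) so small that $|\e^2\L_2+O(\e^3)|\le -\tfrac12\e\L_1$ for all such $\e$. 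Combined with $\min\Sigma_\e=\L^\e$ this gives the first assertion.

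For \textbf{Case II}, $\L_1=0$, so $\L^\e=\L_0+\e^2\L_2+O(\e^3)$, and I need $\L_2<0$. This is exactly the content of Remark~\ref{r:CaseII}\eqref{rem2.4}: condition \eqref{eq:definition-eta} forces $\L_2<0$ (the second term in $\eta$ being nonpositive, and $-(b+1)\L_2$ must then be positive with $b>-1$). Hence with $c=-\L_2/2>0$ and $t_0$ small enough that $|O(\e^3)|\le -\tfrac12\e^2\L_2$ for $\e\le t_0$, we get $\L^\e\le\L_0+\tfrac12\e^2\L_2=\L_0-c\e^2$, and again $\min\Sigma_\e=\L^\e$ finishes the proof.

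There is essentially no obstacle here: the corollary is bookkeeping on top of the three cited results. The only point requiring a word of care is the uniformity of the constant in the $O(\d^3)$ remainder of \eqref{eq:Taylor} — one must make sure the implicit constant there does not depend on $\e$ (it doesn't, since the expansion is purely about the $\d$-dependence of $\L^\d$ near $\d=0$, with constants controlled by the operator norms in the Convention), so that the threshold $t_0$ absorbing the remainder can be chosen in accordance with the Convention. With that understood, the chain $\min\Sigma_\e=\L^\e=\min_{\d\in[\e b,\e]}\L^\d\le$ (Taylor bound) delivers both inequalities.
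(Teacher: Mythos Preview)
Your proposal is correct and follows exactly the route the paper intends: the corollary is stated as an immediate consequence of Theorem~\ref{th2.1} (giving $\min\Sigma_\e=\L^\e$), Lemma~\ref{lem:Taylor} (the Taylor expansion of $\L^\d$), and Lemma~\ref{lm:minimal-value}, and you have filled in precisely the missing arithmetic---absorbing the remainder terms into half of the leading coefficient for $t_0$ small. Your observation that Lemma~\ref{lm:minimal-value} is not strictly needed for the \emph{inequality} (since one simply plugs $\d=\e$ into the expansion) is accurate; its role is already inside the proof of \eqref{2.20}.
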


This ensures that the random perturbation will indeed create new spectrum below the one of the unperturbed operator $\Op^0$.
The newly created spectrum expands at least by order $\e$ or $\e^2$ away from the original spectrum, respectively.
This is crucial to  make sure that our results below on localization are not trivial.

Having proved a lower bound on the expansion of the spectrum below $\L_0$, we then identify a region where we can prove localization.
The next theorem is the first ingredient for this purpose.
It provides a lower bound on the distance of the ground state energy of finite volume restrictions $\Op^{\epsilon}(\xi)$ (with Mezincescu boundary conditions) to the  minimum
over \emph{all} configurations $\Lambda^{\epsilon}$ in terms of an average over expressions containing the $\xi_k$.

\begin{theorem}[See Theorem \ref{th4.1}] \label{thm:lower_bound_ground_state}
For $N\in\NN$, we denote by  $\Pi_{N}$ the open interior of the set
\begin{align*}
&\tilde\Pi_{N}:=\{x\in \RR^{n+1}:\, x'=\sum\limits_{j=1}^{n} a_j e_j,\ a_j\in[0,N),\ 0<x_{n+1}<d\},
\end{align*}
$\G_{N}:= \tilde\Pi_{N}\cap \Gamma$, and by $\L_{N}^\e(\xi)$ the lowest eigenvalue of the operator
\begin{equation*}
\Op_{N}^\e(\xi):=-\D +V_0+ \cL_{N}^\e(\xi),\quad \cL_{N}^\e(\xi):=\sum\limits_{k\in\G_{N}} \cS(k) \cL(\e\xi_k) \cS(-k)
\end{equation*}
on $\Pi_{N}$ subject to boundary condition (\ref{eq:boundary-condition-B}) on $\p\Pi\cap\p\Pi_{N}$ and to the Mezincescu boundary condition
$\frac{\p u}{\p\nu}= \rho^{\epsilon}u$ on $ \g_{N}:=\p\Pi_{N}\setminus\p\Pi$.

Let $t_0$ be sufficiently small.
Then there exist constants $N_1\in \NN$ and $c_0>0$,
depending exclusively on the operators $\cL(t)$, $t\in[-T,T]$,
and on $V_0$,
 such that for all $N \in \NN$ with $N \geq N_1$,
$\xi\in \Omega$, and $\e \in (0, t_0]$, we have
\begin{align*}
&\L_{N}^\e(\xi)-\L^{\epsilon}\geqslant \frac{\e |\L_1|}{4 N^n} \sum\limits_{k\in\G_{N}} (1-\xi_k) \quad \text{for}\ \e<c_0 N^{-2}\ \text{in \eqref{C1}},
\\
&\L_{N}^\e(\xi)-\L^{\epsilon}\geqslant \frac{\eta \e^2}{N^n} \sum\limits_{k\in\G_{N}} (1-\xi_k) \quad\hphantom{11} \text{for}\ \e<c_0 N^{-4}\ \text{in \eqref{C2}},
\end{align*}
where $\L_1$ is given in Lemma~\ref{lem:Taylor} and $\eta$ is given in~\eqref{eq:definition-eta}.
\end{theorem}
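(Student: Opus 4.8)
The plan is to use the Mezincescu boundary condition together with the variational principle applied to the quadratic form of $\Op_N^\e(\xi)$, and to compare this form against a suitable test function built from the optimal ground state $\Psi^\e$ of the periodic cell problem. The key point of the Mezincescu choice is that it makes the ``optimal'' configuration $\xi_k\equiv 1$ achieve exactly $\Lambda^\e$, so that the periodic extension of $\Psi^\e$ (call it $\Psi^\e_N$ on $\Pi_N$) is the ground state of $\Op_N^\e(\1)$ with eigenvalue $\Lambda^\e$; I would first recall this fact from Lemma~\ref{lm:minimal-value} and the construction of $\rho^\e$. Then for a general $\xi$ I would write, for any normalized $u\in\Dom(\Op_N^\e(\xi))$,
\begin{equation*}
\langle \Op_N^\e(\xi)u,u\rangle
= \langle \Op_N^\e(\1)u,u\rangle + \sum_{k\in\G_N}\big\langle \cS(k)\big(\cL(\e\xi_k)-\cL(\e)\big)\cS(-k)u,\,u\big\rangle,
\end{equation*}
and estimate the first term from below by $\Lambda^\e$ (using that $\Psi^\e_N$ is its ground state) plus a spectral-gap penalty $\gamma\|u_\perp\|^2$, where $u_\perp$ is the component of $u$ orthogonal to $\Psi^\e_N$ and $\gamma>0$ is the uniform gap above $\Lambda^\e$ for the cell operator (this gap is stable for small $\e$ by Lemma~\ref{lem:Taylor}-type perturbation theory).

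Next I would take $u$ to be the normalized ground state of $\Op_N^\e(\xi)$ itself, decompose $u=a\Psi^\e_N+u_\perp$ with $|a|^2+\|u_\perp\|^2=1$, and analyze the perturbation sum. On the cell $k$, the operator difference is $\cL(\e\xi_k)-\cL(\e)=(\e\xi_k-\e)\cL_1+O(\e^2(1-\xi_k))+\dots$; evaluated against $\Psi^\e_N$ (which restricts to $\Psi^\e$, hence to $\Psi_0+O(\e)$ on each cell), the leading contribution is $(\e\xi_k-\e)(\cL_1\Psi_0,\Psi_0)_{L^2(\square)}|a|^2$ per cell, up to relative error $O(\e)$. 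In \eqref{C1} this inner product equals $\Lambda_1<0$ by Lemma~\ref{lem:Taylor}, so each term contributes $\e(1-\xi_k)|\Lambda_1||a|^2$ to the lower bound, minus cross terms between $\Psi^\e_N$ and $u_\perp$ and the $\|u_\perp\|^2$-order remainders, all controlled by $C\e\|u_\perp\|$ and absorbed into the gap term $\gamma\|u_\perp\|^2$ provided $\e$ is small relative to $N^{-2}$ (the $N^{-2}$ arises because the gap of the finite-volume operator, or the relevant Neumann-type comparison, degrades like $N^{-2}$; this is the origin of the threshold $\e<c_0N^{-2}$). Dividing by $N^n$ to pass from the sum over $\G_N$ (which has $\asymp N^n$ elements) and keeping track of $|a|^2\geq \tfrac12$ once $\|u_\perp\|^2$ is shown small, yields $\Lambda_N^\e(\xi)-\Lambda^\e\geq \tfrac{\e|\Lambda_1|}{4N^n}\sum_{k\in\G_N}(1-\xi_k)$.

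In \eqref{C2} the first-order term vanishes since $\Lambda_1=0$ and $(\cL_1\Psi_0,\Psi_0)_{L^2(\square)}=0$ (Remark~\ref{r:CaseII}\eqref{rem2.2}), so one must expand to second order: the relevant per-cell quantity becomes, after using \eqref{2.12b} to identify $\Phi_1$ and the Taylor data of Lemma~\ref{lem:Taylor}, precisely $\e^2(1-\xi_k)\big(-(b+1)\Lambda_2+(1-b)(\Phi_1-\Psi_1,\cL_1\Psi_0)_{L^2(\square)}\big)$-type expressions, i.e.\ $\e^2(1-\xi_k)\eta$ up to rearrangement and using $\xi_k\in[b,1]$ to bound $1-\xi_k^2$ and similar quadratic combinations by multiples of $1-\xi_k$; here the remainder is $O(\e^3(1-\xi_k))$ plus gap-absorbed terms, forcing the sharper threshold $\e<c_0N^{-4}$. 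I would cite Lemma~\ref{lem:Taylor} and the already-established identities of \cite{Borisov-17} (Remark~\ref{r:CaseII}) to do the bookkeeping that turns the raw second-order expansion into the stated coefficient $\eta$.

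The main obstacle is the bookkeeping of the cross terms and higher-order remainders uniformly in $N$: one needs the perturbation sum and the spectral-gap penalty to be compared on the same footing, which requires that the error terms of order $\e^2$ (resp.\ $\e^3$) summed over $\asymp N^n$ cells, together with the $\Psi^\e_N$–$u_\perp$ cross terms of order $\e$ (resp.\ $\e^2$), stay below the gap term $\gamma\|u_\perp\|^2$; this is exactly what dictates the $\e\lesssim N^{-2}$ (resp.\ $N^{-4}$) restriction and why the constants $N_1,c_0$ depend only on $\cL(t)$ and $V_0$. A secondary technical point is justifying that $\|u_\perp\|^2$ is indeed small — this is a bootstrap: the inequality we are proving, used with the trivial lower bound $\Lambda_N^\e(\xi)\geq\Lambda^\e-C\e$ together with the gap estimate, first yields $\|u_\perp\|^2\leq C'\e$, which then feeds back to make the cross terms genuinely lower-order and closes the argument.
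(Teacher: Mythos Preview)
Your approach is genuinely different from the paper's. The paper does not run a variational argument on the ground state of $\Op_N^\e(\xi)$ at all; instead it sets up an \emph{exact resolvent identity} (their equation~(4.10)),
\[
\L_N^\e(\xi)-\L^\e=\frac{\big(\cA_N^\e(\xi)\hat\cL_N^\e(\xi)\Psi^\e,\Psi^\e\big)_{L^2(\Pi_N)}}{N^n\|\Psi^\e\|_{L^2(\square)}^2},
\qquad
\cA_N^\e(\xi)=\big(I+\hat\cL_N^\e(\xi)\hat\cR_N^\e(\L_N^\e(\xi))\big)^{-1},
\]
where $\hat\cR_N^\e$ is the \emph{reduced resolvent} of $\Op_N^\e(\1)$ near $\L^\e$, satisfying $\|\hat\cR_N^\e(\l)\|\leq CN^2$. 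Writing $\cA=I-\hat\cL\hat\cR\cA$ splits the right side into a main term $S_1$ (paired only against $\Psi^\e$) and a correction $S_2$; the $N^2$ bound on $\hat\cR$ is precisely what produces the threshold $\e<c_0N^{-2}$. This sidesteps any need to decompose or bootstrap the perturbed ground state. For \eqref{C2} the paper simply cites~\cite[Sect.~5]{Borisov-17}, where the same resolvent formula is expanded to second order.

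Your variational route is morally right for \eqref{C1}, and your identification of the $N^{-2}$ gap as the origin of the $\e N^2$ restriction is correct (their Lemma~\ref{lm4.1} confirms the second eigenvalue of $\Op_N^\e(\1)$ sits at $\L_0+C_1N^{-2}+O(\e)$). But there is a real gap in your \eqref{C2} sketch: you assert that the second-order per-cell coefficient comes out as $\eta$, which involves $\Phi_1$ solving the \emph{Neumann} cell problem~\eqref{2.12b}, not the periodic one. In a naive variational expansion against $\Psi^\e_N$ you only ever see the Taylor data $\Psi_0,\Psi_1,\Psi_2$ of the \emph{periodic} ground state; $\Phi_1$ does not appear. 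In the paper's scheme $\Phi_1$ arises because the reduced resolvent $\hat\cR_N^\e$ on the finite box with Mezincescu boundary is, to leading order on each cell, the inverse of $-\Delta+V_0-\L_0$ with Neumann conditions on~$\g$---exactly the operator whose solution is $\Phi_1$. Your framework would recover this only if you expanded $u_\perp$ itself via the reduced resolvent, at which point you are back to the paper's method. So for \eqref{C2} your outline does not yet contain the mechanism that produces the correct constant $\eta$; citing~\cite{Borisov-17} for ``bookkeeping'' does not help, since that reference works with the resolvent identity, not a variational decomposition.
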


It is a canonical step to combine this theorem with large deviation estimates, and so-called Combes-Thomas estimates to deduce a so-called initial scale estimate.
Initial scale estimates are probabilistic bounds on the decay of the resolvent of finite volume restrictions of $\Op^\e(\om)$.
They serve as an induction anchor in the multi-scale analysis.
The initial scale estimates can be found in Theorem~\ref{thm:ISE} in Section~\ref{s:initial}.

The second ingredient in a multi-scale analysis proof of localization are Wegner estimates.
They bound the probability of finding an eigenvalue of a finite volume restriction of $\Op^{\e}(\om)$ in an interval and serve as a non-resonance condition in the induction step in the multi-scale analysis.
The next theorem is a Wegner estimate in an $\e$-dependent regime below $\L_0$.
\begin{theorem}[See Theorem~\ref{thm:Wegner_with_epsilon}]
\label{thm:Wegner_intro}
  Assume that $t_0 > 0$ is sufficiently small.
  There are $D$ depending exclusively on the operators $\cL(t), t\in [-T,T]$,
  $C_{n,h_0}$ depending exclusively on $n$ and $h_0$,
  as well as $C_{n,V_0}$ depending merely on $n$, $V_0$, and the lattice $\Gamma$,
  such that for all $\e \in (0, t_0]$, all $\alpha \in \Gamma$, and all $N \in \NN$
  the following hold:
\begin{enumerate}[(i)]
  \item
  For all $E \leq \Lambda_0 - D \e^2$ and  all $\kappa \leq D \e^2/ 4$ we have
  \begin{align*}
    \PP ( \dist(\sigma(\Op_{\a,N}^\e(\omega)), E) \leq \kappa )
    \leq
    \frac{C_{n,h_0}}{D \e^2}  [1+C_{n,V_0}|\square| N^n]\cdot \kappa \, N^{n}.
  \end{align*}
  \item
  Assume that $\cL_2 \leq 0$.
  Then for all $E \leq \Lambda_0 - D \e^3$ and all $\kappa \leq D \e^3/ 4$,  we have
  \begin{equation*}
    \PP ( \dist(\sigma(\Op_{\a,N}^\e(\omega)), E) \leq \kappa )
    \leq
    \frac{C_{n,h_0}}{D \e^3}  [1+C_{n,V_0}|\square| N^n]\cdot \kappa \, N^{n}.
  \end{equation*}
 \end{enumerate}
\end{theorem}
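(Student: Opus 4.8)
The plan is to follow the vector field method of Klopp and Hislop–Klopp, adapted to the abstract setting. Write $\Op^{\epsilon}_{\alpha,N}(\omega)$ for the finite volume restriction of the random operator with Mezincescu boundary conditions on a cube of side $N$ centred at $\alpha$, and let $E_0 := \Lambda_0$ be the bottom of the unperturbed spectrum. Since we are only interested in energies $E$ at distance at least $D\epsilon^2$ (resp. $D\epsilon^3$) below $\Lambda_0$, the relevant eigenvalues of $\Op^{\epsilon}_{\alpha,N}(\omega)$ lie strictly below $\Lambda_0 - D\epsilon^2/2$, hence at a security distance of order $\epsilon^2$ (resp. $\epsilon^3$) away from $\spec(\Op^0)$. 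The strategy is to show that the expectation of the number of such eigenvalues in the window $[E-\kappa, E+\kappa]$, i.e. $\EE\,\Tr\,\1_{[E-\kappa,E+\kappa]}(\Op^{\epsilon}_{\alpha,N}(\omega))$, is bounded by the right-hand side; by Chebyshev's inequality this dominates $\PP(\dist(\spec(\Op^{\epsilon}_{\alpha,N}(\omega)),E)\leq\kappa)$.

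The key steps are as follows. First I would establish a \emph{quantitative monotonicity / spectral shift} estimate: there is a collective variation of all random parameters $\xi_k \mapsto \xi_k + s$ (a one-parameter push $s\geq 0$) under which the perturbation $\cL^{\epsilon}(\xi)$ moves in a controlled direction. Because $\cL(t) = t\cL_1 + t^2\cL_2 + t^3\cL_3(t)$ and $\Lambda_1 < 0$ in Case I (resp. $\Lambda_1=0$, and $\cL_2\leq 0$ controls the quadratic term in Case II), the leading contribution of $\frac{d}{ds}\cL^{\epsilon}(\xi+s)\big|_{s=0} = \epsilon\sum_k \cS(k)(\cL_1 + 2\epsilon\xi_k\cL_2 + \ldots)\cS(-k)$ has, when sandwiched between spectral projectors of $\Op^{\epsilon}_{\alpha,N}(\omega)$ onto energies near $E$, a definite sign with a lower bound of order $\epsilon$ (resp. $\epsilon^2$) — this is exactly where the constant $D$ and the restriction $E\leq\Lambda_0 - D\epsilon^2$ enter, since one needs $\langle u, \cL_1 u\rangle$ bounded below for $u$ spectrally localized near the bottom, which by the Taylor expansion of Lemma~\ref{lem:Taylor} and the security distance from $\Lambda_0$ holds after absorbing error terms. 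Second, translating this into the language of the vector field method: the averaged trace can be written as a spectral shift and bounded by $\int h_0$-weighted integrals of $\frac{\partial}{\partial \xi_k}$ of a smoothed counting function, summed over the $N^n$ cells $k\in\G_N$. The single-site density $h_0$ contributes the factor $C_{n,h_0}$, the number of cells contributes $N^n$, the width of the spectral window contributes $\kappa$, the inverse of the lower bound on the derivative contributes $(D\epsilon^2)^{-1}$ (resp. $(D\epsilon^3)^{-1}$), and the term $[1 + C_{n,V_0}|\square|N^n]$ arises from controlling the trace of the spectral projector on a single cell by a Weyl-type / heat-kernel bound with explicit dependence on $V_0$ and the cell volume.

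The main obstacle I anticipate is step one: obtaining the lower bound of order $\epsilon$ (resp. $\epsilon^2$) on the quadratic form $\langle u, \frac{d}{ds}\cL^{\epsilon}(\xi+s)u\rangle$ \emph{uniformly} over all $u$ in the spectral subspace $\1_{(-\infty,\Lambda_0-D\epsilon^2/2]}(\Op^{\epsilon}_{\alpha,N}(\omega))$ and uniformly in $N$, $\xi$. The difficulty is that $\cL_1$ is merely a bounded symmetric operator from $\H2(\square)$ to $L^2(\square)$, not sign-definite, so positivity must be extracted from the interplay of $\cL_1$ with the geometry near the ground state: one uses that eigenfunctions with energy below $\Lambda_0 - D\epsilon^2/2$ are, cell by cell, close to multiples of $\Psi_0$ (the bottom of the transversal problem) with an $L^2$-defect controlled by the security distance, and then invokes $(\Psi_0,\cL_1\Psi_0)_{L^2(\square)} = \Lambda_1 < 0$ in Case I (resp. in Case II that $\Lambda_1=0$ forces one to go to second order, where $\cL_2\leq0$ together with $\eta>0$ gives the sign). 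Making the ``cell by cell close to $\Psi_0$'' statement precise and quantitative — with errors that can be absorbed by choosing $D$ large and $t_0$ small, independently of $N$ — is the technical heart of the argument. The remaining steps, i.e. the passage from the derivative bound to the trace bound via an $h_0$-averaging (spectral averaging) identity and the Weyl-type control of the local trace, are by now standard once the monotonicity-type input is in place, and I would carry them out following \cite{HislopK-02,Klopp-95a} with the obvious modifications for the abstract perturbation and the Mezincescu boundary conditions.
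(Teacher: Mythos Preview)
Your overall architecture (vector field method, spectral averaging against $h_0$, Weyl-type trace bound) matches the paper, but the specific vector field you choose is different, and this difference is exactly where your proposal has a genuine gap.

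You propose the \emph{translation} flow $\xi_k\mapsto \xi_k+s$, so that the relevant derivative is $\sum_k \cS(k)\big(\e\cL_1+2\e^2\xi_k\cL_2+\cdots\big)\cS(-k)$, and then you need a sign on $\langle u,\cL_1 u\rangle$ for $u$ spectrally supported below $\Lambda_0-D\e^2$. You correctly flag this as the main obstacle, and your suggested fix (eigenfunctions are cell-by-cell close to $\Psi_0$, then invoke $(\cL_1\Psi_0,\Psi_0)=\Lambda_1<0$) is not workable in the generality claimed: eigenfunctions of $\Op^\e_{\a,N}(\xi)$ at energies merely $O(\e^2)$ below $\Lambda_0$ need not be close to $\Psi_0$ in each cell with an error small enough to preserve the sign of a general bounded operator $\cL_1\colon W^{2,2}(\square)\to L^2(\square)$, uniformly in $N$. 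Nothing in the hypotheses makes $\cL_1$ sign-definite, and the only information you have is the single number $\Lambda_1=(\cL_1\Psi_0,\Psi_0)$; extracting a quadratic-form lower bound on the whole low-energy subspace from that is precisely the kind of monotonicity one does \emph{not} have here. In \eqref{C2} the situation is worse still, since $\Lambda_1=0$ and you would have to push the argument to second order.

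The paper avoids this entirely by using the \emph{dilation} vector field $\vecfield=\sum_k\omega_k\partial_{\omega_k}$ instead of translation. The point is the algebraic identity
\[
t\,\partial_t\cL(t)-\cL(t)=t^2\cL_2+2t^3\cL_3(t)+t^4\cL_3'(t),
\]
so that $\vecfield\cL^\e_{\a,N}(\omega)-\cL^\e_{\a,N}(\omega)$ contains \emph{no} $\cL_1$ contribution at all: the linear term cancels identically. What remains is $O(\e^2)$ (and, if $\cL_2\le 0$, actually $O(\e^3)$) and is controlled by mere relative boundedness with respect to the positive operator $\Op_{\a,N}-E_0$, with $E_0\le\Lambda_0-D\e^2$ (resp.\ $\Lambda_0-D\e^3$). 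No sign information on $\cL_1$, and no structure of low-energy eigenfunctions, is ever used. After this, the argument proceeds by conjugating with $(\Op_{\a,N}-E)^{-1/2}$ to the bounded operator $X$, applying $\vecfield$ to a smoothed spectral projector of $X$ near $-1$, integrating by parts against the density $h_0$ (which produces the $\|h_0\|_{W^{1,1}}$ factor in $C_{n,h_0}$), and finishing with the Weyl count you describe. So: keep your steps two and three, but replace the translation flow by the dilation flow; then the ``main obstacle'' you identify simply disappears.
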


It is paramount to notice that in Theorem~\ref{thm:Wegner_intro} the intervals $[\Lambda^{\e}, \Lambda_0 - D \e^2]$ or $[\Lambda^{\e}, \Lambda_0 - D \e^3]$, respectively,  from which $E$ is chosen, must have a sufficiently large distance to $\L_0$.
These intervals can only be non-empty if $\e\mapsto\L^{\e}$ expands (anti-tonically) at a sufficient fast rate below $\L_0$.
It is Corollary \ref{c:expansion} that ensures this.

The initial scale estimate and the Wegner estimate are the crucial ingredients to start the multi-scale analysis as in~\cite{GerminetK-03} and conclude spectral and dynamical localization.
To implement this one needs actually a number of several additional, rather general properties. We show in Section~\ref{sec:MSA} that they hold, provided the following assumption is satisfied.

\setcounter{assumption}{17}
\begin{assumption}
\label{B1+2}
\begin{enumerate}\def\theenumi{\textbf{R.{\arabic{enumi}}}}
\item
\label{B1}
There exists a constant $c_6>0$ such that for any function $\vp\colon \overline{\square'} \to [0,1]$, $\vp\in C^\infty(\overline{\square'})$, there exists a constant $c_7 = c_7(\|\vp\|_{C^2(\overline{\square})})$
satisfying the estimate
\begin{multline*}
\text{for all }\ u\in \H2(\square), i=1,2,3:\\
\sup_{t\in[-T,T]}\big|(\cL_i(t) u,\vp u)_{L^2(\square)}\big|\leqslant c_6 (\vp \nabla u,\nabla u)_{L^2(\square)} + c_7 \|u\|_{L^2(\square)}^2.
\end{multline*}
For $i=1,2$, the above supremum can be omitted.
The constant $c_7$ is allowed to depend on $\vp$ only through its norm $\|\vp\|_{C^2(\overline{\square})}$ in such a way that
$\|\vp\|_{C^2(\overline{\square})}\mapsto c_7 $  is non-decreasing.
\item
\label{B2}
For any function $\vp\colon \overline{\square'} \to [0,1]$, $\vp\in C^\infty(\overline{\square'})$, there exists a constant
$c_8 = c_8(\|\vp\|_{C^2(\overline{\square})})$ such that the estimate
\begin{equation*}
\text{for all } \ u\in \H2(\square), i=1,2,3: \
\sup_{t\in[-T,T]}\|\cL_i(t) \vp u -\vp \cL_i(t) u\|_{L^2(\square)}\leqslant c_8\|u\|_{W^{1,2}(\square)}
\end{equation*}
holds true, where for  $i=1,2$, the above supremum can be omitted.
The constant $c_8$ is allowed to depend on $\vp$ only through its norm $\|\vp\|_{C^2(\overline{\square})}$ in such a way that
$\|\vp\|_{C^2(\overline{\square})}\mapsto c_8 $  is non-decreasing.
\end{enumerate}
\end{assumption}
Finally, this allows us formulate our results on Anderson localization in a small neighbourhood of $\Sigma_\e$.

\begin{theorem}\label{thm:localization_C1}
Assume that we are in \eqref{C1}, Assumptions~\ref{A1}, \ref{B1+2} hold, and $t_0 > 0$ is sufficiently small.
Then there exists $C > 0$ such that for all $\e\in(0,t_0]$ the set
$\Sigma_\e\cap[\min\Sigma_\e,\min\Sigma_\e+C \e]$
is almost surely non empty and exhibits dynamical localization.
\end{theorem}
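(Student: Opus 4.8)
The plan is to assemble the three standard pillars of a multi-scale analysis (MSA) proof of localization---an initial length scale estimate (ILSE), a Wegner estimate, and a Combes--Thomas estimate---on a suitable $\e$-dependent energy interval near $\min\Sigma_\e$, feed them into the abstract MSA machinery of \cite{GerminetK-03} (whose hypotheses we have arranged to verify in Section~\ref{sec:MSA} under Assumption~\ref{B1+2}), and then separately argue that the localization interval we produce actually intersects the almost sure spectrum. First I would fix, in \eqref{C1}, the energy window $I_\e:=[\min\Sigma_\e,\ \L_0-D\e^2]$ with $D$ as in Theorem~\ref{thm:Wegner_intro}(i). By Corollary~\ref{c:expansion} we have $\min\Sigma_\e=\L^\e\le\L_0-c\e$ with $c=-\L_1/2>0$, so for $\e\le t_0$ small enough this interval has length $\gtrsim (c/2)\e$; hence a constant $C>0$ (namely $C=c/2$ works for $\e$ small) such that $[\min\Sigma_\e,\min\Sigma_\e+C\e]\subset I_\e$.

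Next I would verify the MSA input on $I_\e$. The Wegner estimate is exactly Theorem~\ref{thm:Wegner_intro}(i): for $E\in I_\e$ and $\kappa\le D\e^2/4$ it gives $\PP(\dist(\spec(\Op^\e_{\a,N}(\om)),E)\le\kappa)\le \tfrac{C_{n,h_0}}{D\e^2}[1+C_{n,V_0}|\square|N^n]\,\kappa N^n$, which is polynomial in $N$ and linear (hence Hölder-continuous) in the energy-window length---the form required by the continuum MSA. The initial length scale estimate follows by the canonical route sketched after Theorem~\ref{thm:lower_bound_ground_state}: Theorem~\ref{thm:lower_bound_ground_state} gives the deterministic lower bound $\L^\e_N(\xi)-\L^\e\ge \tfrac{\e|\L_1|}{4N^n}\sum_{k\in\G_N}(1-\xi_k)$ on cubes of side $N$ (with Mezincescu boundary conditions so that $\min\Sigma_\e$ is genuinely the finite-volume ground-state floor), and since the $\xi_k$ are i.i.d.\ with $\max\supp\mu=1$, a large-deviation estimate shows that with probability exponentially close to one in $N^n$ the average $\tfrac1{N^n}\sum(1-\xi_k)$ is bounded below by a positive constant, so that $\L^\e_N(\om)\ge \min\Sigma_\e+c'\e$ on cubes of side $N\sim\e^{-1/2}$ (compatibly with the constraint $\e<c_0N^{-2}$); this keeps the spectrum away from energies just above $\min\Sigma_\e$ and, combined with Combes--Thomas, yields resolvent decay at the initial scale. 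The Combes--Thomas estimate in this setting is standard for $\Op^\e_{\a,N}(\om)$ since $\cL^\e$ is relatively Laplacian-bounded with bound $<1$; it provides exponential off-diagonal resolvent decay with a rate depending on the distance to the spectrum, i.e.\ on $\dist(E,\L_0)\gtrsim\e^2$ on $I_\e$. With these three ingredients and the general properties established in Section~\ref{sec:MSA}, the MSA of \cite{GerminetK-03} runs and yields, for $\e\le t_0$, pure point spectrum with exponentially decaying eigenfunctions and dynamical (in fact strong Hilbert--Schmidt) localization throughout $I_\e\supset[\min\Sigma_\e,\min\Sigma_\e+C\e]$.

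It remains to show the set $\Sigma_\e\cap[\min\Sigma_\e,\min\Sigma_\e+C\e]$ is almost surely nonempty; but this is immediate from Theorem~\ref{th2.1}, since under \eqref{C1} and Assumption~\ref{A1} we have $\min\Sigma_\e=\L^\e$, so $\min\Sigma_\e$ itself lies in the set, and $\Sigma_\e$ is the deterministic almost sure spectrum. Thus the set is nonempty (it contains its own infimum) and, being a subset of $I_\e$, exhibits dynamical localization; possibly shrinking $t_0$ once more absorbs all the smallness requirements accumulated above. The main obstacle---and the reason this theorem is not a triviality---is the interplay of scales: the Wegner bound only holds at a security distance $\sim\e^2$ below $\L_0$, the ILSE only holds for $N$ up to $\sim\e^{-1/2}$, and the localization window only has positive length because of the \emph{linear} ($\sim\e$) expansion rate from Corollary~\ref{c:expansion}; one must check that the MSA length-scale growth, the polynomial volume factors $[1+C_{n,V_0}|\square|N^n]$ in the Wegner bound, and the exponential-in-$N^n$ large-deviation gain are mutually compatible uniformly in $\e$, i.e.\ that all constants can be chosen independent of $\e$ (as the Convention after \eqref{2.2} demands). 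Verifying this uniformity, rather than any single estimate, is where the real work lies.
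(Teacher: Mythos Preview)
Your overall plan is the right one and matches the paper's strategy: fix an $\e$-dependent energy window bounded away from $\L_0$ by order $\e^2$ (so the Wegner estimate of Theorem~\ref{thm:Wegner_intro}(i) applies), verify the Germinet--Klein hypotheses there via an initial scale estimate derived from Theorem~\ref{thm:lower_bound_ground_state} plus Combes--Thomas, and finally note that by Corollary~\ref{c:expansion} the window has length $\gtrsim\e$ and contains $\min\Sigma_\e=\L^\e$.

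The gap is in your choice of initial scale. You take $N\sim\e^{-1/2}$, which is indeed the largest $N$ at which the constraint $\e<c_0N^{-2}$ of Theorem~\ref{thm:lower_bound_ground_state} is satisfied. But at that scale the Combes--Thomas decay you obtain is only $\exp\big(-c\sqrt{c'\e}\cdot N\big)=\exp(-O(1))$, a fixed constant: the spectral gap from Theorem~\ref{thm:lower_bound_ground_state} is $\sim c'\e$ and the distance across the box is $\sim N\sim\e^{-1/2}$. This cannot beat the polynomial prefactor $D_\cI N_0^{11n/3}$ in the Germinet--Klein initial scale criterion (recall $D_\cI\sim Q_\cI\sim\e^{-2}$ here), so inequality~\eqref{eq:ISE_GerminetK} fails as $\e\to0$. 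The issue is not merely one of ``uniformity of constants'' as your last paragraph suggests; the scales are genuinely incompatible.

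The paper resolves this by choosing the much larger initial scale $N_0\sim\e^{-2}$; then the Combes--Thomas exponent is $\sim\sqrt{N_0}\sim\e^{-1}$, which dominates any polynomial in $\e^{-1}$. Of course Theorem~\ref{thm:lower_bound_ground_state} does \emph{not} apply directly at this $N_0$, since $\e<c_0N_0^{-2}$ would require $\e<c_0\e^4$. Instead one partitions $\Pi_{\a,N_0}$ into sub-boxes of side $N_0^{1/\tau}$ (with $\tau=5$), applies Theorem~\ref{thm:lower_bound_ground_state} on each sub-box (where the constraint becomes $\e<c_0N_0^{-2/5}\sim c_0\e^{4/5}$, which holds for small $\e$), lifts the spectral gap to the full box via the bracketing property of Mezincescu boundary conditions, and union-bounds over the $N_0^{n(1-1/\tau)}$ sub-boxes. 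This is precisely the content of Theorem~\ref{thm:ISE}: the parameter $\tau$ encodes this sub-box argument and relaxes the admissible range of $\e$ from $\e<c_0N^{-2}$ to $\e\le c_0N^{-2/\tau}$, making $N_0\sim\e^{-2}$ compatible. You need either to invoke Theorem~\ref{thm:ISE} directly or to reproduce this sub-box step; without it the argument does not close.
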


\begin{theorem}\label{thm:localization_C2}
Assume that we are in \eqref{C2}, Assumptions~\ref{A1}, \ref{B1+2} hold, and $t_0 > 0$ is sufficiently small.
Assume also that the operator $\cL_2\leqslant 0$ is non-positive.
Then there exists $C > 0$ such that for all $\e\in(0,t_0]$ the set $\Sigma_\e\cap[\min\Sigma_\e,\min\Sigma_\e+C \e^2]$
is almost surely non empty and exhibits dynamical localization.
\end{theorem}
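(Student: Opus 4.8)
The plan is to run the multi-scale analysis of~\cite{GerminetK-03} for $\Op^\e(\om)$ at a fixed but arbitrarily small disorder parameter $\e$. Its two probabilistic inputs are the Wegner estimate of Theorem~\ref{thm:Wegner_intro}(ii) --- available precisely because we assume $\cL_2\leqslant 0$ --- and the initial scale estimate of Theorem~\ref{thm:ISE}. The remaining structural inputs of the multi-scale scheme (independence at a distance, geometric resolvent / Simon--Lieb inequalities, the eigenfunction decay inequality, the generalized eigenfunction expansion, and measurability and self-adjointness of the Mezincescu finite volume operators) are exactly the ``several additional, rather general properties'' established in Section~\ref{sec:MSA} under Assumption~\ref{B1+2}; independence at a distance is immediate from the i.i.d.\ structure. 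The output is spectral and dynamical localization in an energy interval $I_\e$, and the remaining point --- the crux of the whole construction --- is to pin down $I_\e$ so that it reaches down to $\min\Sigma_\e$ while still staying below the unperturbed threshold $\L_0$, and so that $|I_\e|\geqslant C\e^2$ with $C$ independent of $\e$.

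To locate $I_\e$ I would proceed as follows. By Corollary~\ref{c:expansion}, $\min\Sigma_\e=\L^\e\leqslant\L_0-c\e^2$ with $c=-\L_2/2>0$, so there is a window of width $\sim\e^2$ available below $\L_0$. The Wegner estimate of Theorem~\ref{thm:Wegner_intro}(ii) covers all $E\leqslant\L_0-D\e^3$; since $D\e^3\ll c\e^2$ for small $\e$, it holds on all of $[\L^\e,\L_0-\tfrac{c}{2}\e^2]$. For the initial scale estimate I would combine Theorem~\ref{thm:lower_bound_ground_state} with a Bernstein-type large deviation bound for $\sum_{k\in\G_N}(1-\xi_k)$ (typically of order $N^n(1-\EE\xi_0)$, and smaller only with probability $\leqslant e^{-c'N^n}$): at the largest admissible initial scale $\ell_0$ --- the constraint $\e<c_0\ell_0^{-4}$ only forces $\ell_0\lesssim\e^{-1/4}$, which still tends to infinity as $\e\to0$ --- this yields $\L_{\ell_0}^\e(\xi)\geqslant\L^\e+c_1\e^2$ on an event of probability $\geqslant1-e^{-c'\ell_0^{\,n}}$, and Combes--Thomas then turns this spectral gap into exponential resolvent decay at the initial scale for every $E\leqslant\L^\e+\tfrac{c_1}{2}\e^2$. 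Setting $I_\e:=[\L^\e,\L^\e+\tfrac{c_1}{2}\e^2]$ and choosing $c_1<c$, both inputs hold throughout $I_\e$ and $I_\e$ lies strictly below $\L_0$.

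Feeding $I_\e$ into~\cite{GerminetK-03} then yields, $\PP$-almost surely, pure point spectrum in $I_\e$ with exponentially decaying eigenfunctions and dynamical localization in $I_\e$. With $C:=c_1/2$, the set $\Sigma_\e\cap[\min\Sigma_\e,\min\Sigma_\e+C\e^2]=\Sigma_\e\cap I_\e$ therefore exhibits dynamical localization; it is non-empty because $\min\Sigma_\e\in\Sigma_\e$ (closedness of $\Sigma_\e$ from Theorem~\ref{th2.1}), and --- the reason the statement is not vacuous --- it is a genuine piece of spectrum lying strictly below $\L_0$, hence disjoint from $\Sigma_0=\spec(\Op^0)$, by Corollary~\ref{c:expansion}.

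The main obstacle is the competing $\e$-dependence of the quantitative inputs: the Wegner constant degrades like $\e^{-3}$, the gap produced by the initial scale estimate is only $\sim\e^2$, the Combes--Thomas rate it yields is only $\sim\e$, and the largest permitted initial scale is only $\sim\e^{-1/4}$. One must verify that, for all sufficiently small $\e$, these still combine so as to satisfy the scale-quantitative hypotheses that launch the bootstrap induction of~\cite{GerminetK-03} --- in particular that the admissible initial scale window $[N_1,(c_0/\e)^{1/4}]$ is wide enough to contain the scale forced by the Wegner and Combes--Thomas constants --- and, in the same spirit, that the Mezincescu boundary conditions are compatible with the geometric resolvent inequalities of the multi-scale machinery. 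This bookkeeping is what occupies Sections~\ref{s:initial}, \ref{s:Wegner}, and~\ref{sec:MSA}; the rest is assembly.
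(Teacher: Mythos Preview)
Your overall architecture---multi-scale analysis fed by the Wegner estimate of Theorem~\ref{thm:Wegner_intro}(ii) and an initial scale estimate---matches the paper's, and you correctly isolate the quantitative tension between the various $\e$-powers as ``the main obstacle''. But what you defer as ``bookkeeping'' is in fact a structural gap: the scheme you outline does not close.

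You apply Theorem~\ref{thm:lower_bound_ground_state} directly at the initial scale, which forces $\e<c_0\ell_0^{-4}$ and hence $\ell_0\lesssim\e^{-1/4}$. With your own estimated Combes--Thomas rate $\sim\e$ (from a gap $\sim\e^2$) the resolvent exponent is then $\e\cdot\ell_0\sim\e^{3/4}\to0$: no decay at all, let alone enough to beat the prefactor $D_\cI\,\ell_0^{11n/3}\sim\e^{-3}\ell_0^{11n/3}$ required in~\eqref{eq:ISE_GerminetK}. For that inequality one needs $\e\,\ell_0\to\infty$, i.e.\ $\ell_0\gg\e^{-1}$, incompatible with $\ell_0\lesssim\e^{-1/4}$. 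Independently, the a~priori lower bound $N_0\geqslant\eta_\cI^{-1/(5/3+n)}\sim\e^{-3/(5/3+n)}$ in Theorem~\ref{thm:ISE_GerminetK} already exceeds $\e^{-1/4}$ for every $n\geqslant1$. So the ``admissible initial scale window $[N_1,(c_0/\e)^{1/4}]$'' is \emph{not} wide enough.

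The paper circumvents this by \emph{not} applying Theorem~\ref{thm:lower_bound_ground_state} at the initial scale. The passage to Theorem~\ref{thm:ISE} (whose proof is delegated to the cited references) uses Theorem~\ref{th4.1} at a sub-scale $N^{1/\tau}$ inside a box of side $N$, turning the constraint $\e<c_0 N^{-4}$ into $\e<c_0 N^{-4/\tau}$, i.e.\ $N\lesssim\e^{-\tau/4}$. With $\tau=17$ the paper then chooses $N_0(\e)\sim\e^{-4}$ (Section~\ref{sec:MSA}); at that scale the Combes--Thomas exponent in Theorem~\ref{thm:ISE} is $c_2\sqrt{N_0}\sim\e^{-2}$, which comfortably absorbs $D_\cI\sim\e^{-3}$ and the polynomial volume factors. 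This two-scale decoupling is the missing idea in your proposal.
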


 The proof of Theorems~\ref{thm:localization_C1} and~\ref{thm:localization_C2} follows the classical strategy of multi-scale analysis proofs of localization, but some extra attention is required due to the interplay between the scale $N$ and the disorder $\e$.
 These details are explained in Section~\ref{sec:MSA}.

\section{Reformulation for operators acting on the entire space}
\label{s:entire-space}

We now explain the necessary modifications in order to also treat operators living on the whole space $\RR^n$.
We use the same approach as in \cite[Sect. 3.7]{BorisovGV-16}.
Let us define the operators
\begin{equation*}
\cL'(t):=t \cL'_1 + t^2 \cL'_2 + t^3 \cL'_3(t),
\end{equation*}
where $\cL'_i: {\H2}(\square')\to L^2(\square')$ are bounded symmetric linear operators and the operator $\cL'_3(t)$ as well as its derivative with respect to $t$ are bounded uniformly in $t\in[-t_0,t_0]$.
We then introduce in $L^2(\RR^n)$ the operator
\begin{equation*}
{\Op'}^\e(\xi):=-\D_{x'}+ \sum\limits_{k\in\G} \cS '(k) \cL'(\e \xi_k) \cS '(-k)
\end{equation*}
where $\D_{x'}$ denotes the Laplacian in $\RR^n$ and $\cS' (k)$ is the shift operator: $(\cS' (k)u)(x')=u(x'+k)$.
The operator ${\Op'}^\e(\om)$ is a random self-adjoint operator in $L^2(\RR^n)$.

In order to use results of the previous section, we extend this construction from $\RR^n$ to the layer $\Pi \subset \RR^{n+1}$ where we set its height $d = \pi$ and impose Neumann boundary conditions on $\p\Pi$, i.e.
\begin{equation*}
\mathcal{B}u=\frac{\p u}{\p x_{n+1}}.
\end{equation*}
More precisely, we extend the operators $\cL_i'$ constantly to the {$(n+1)$th} dimension, i.e. we extend them to operators $\cL_i: {\H2}(\square)\to L^2(\square)$ via
\begin{equation*}
(\cL_i u)(x',x_{n+1})=\big(\cL'_i u(\cdot,x_{n+1})\big)(x').
\end{equation*}
This allows to introduce $\Op^\e(\xi)$ in $L^2(\Pi)$ as before by (\ref{2.2}) with $V_0 = 0$.
Now, we have in particular that the lowest eigenvalue $\L_0$ of
\[
\Op^\bot:=\frac{\drm^2\hphantom{x_{n+}}}{\drm x_{n+1}^2}
 \quad
 \text{on}
 \quad
 (0,\pi)
\]
with Neumann boundary condition is $\L_0=0$, see Section~\ref{s:minimum}.
The associated eigenfunction $\Psi_0$, extended to $\square$, and with the normalization as in Section~\ref{s:minimum} is simply a constant function
\begin{equation*}
\Psi_0=\frac{\mathbf{1}}{\sqrt{\pi|\square'|}},\quad \mathbf{1}(x')\equiv 1.
\end{equation*}

The spectral properties of
$\Op^\e(\xi)$ can be analysed by separation of variables ($x'$ and $x_{n+1}$).
More precisely we have
\begin{equation}\label{5.24b}
\Op^\e(\xi)={\Op'}^\e\otimes\Op^\bot,
\end{equation}
This implies
\begin{equation}\label{5.24a}
\spec(\Op^\e(\xi))=\bigcup\limits_{m=0}^{\infty} \spec({\Op'}^\e(\xi))+m^2.
\end{equation}
and that $\Op^\e(\xi) \chi_I(\Op^\e(\xi) )$
and ${\Op'}^\e(\xi) \chi_I({\Op'}^\e(\xi))$ are unitarily equivalent
for
\[
I:=[\min\spec({\Op'}^\e(\xi)),\min\spec({\Op'}^\e(\xi))+1].
\]
Since Theorem~\ref{th2.1}, Lemma~\ref{lem:Taylor}, and Lemma~\ref{lm:minimal-value}
concern only properties in a small neighbourhood of the spectral bottom, they
immediately imply analogous statements for the operator ${\Op'}^\e(\xi)$.

More precisely, the analogue of the auxiliary operator $\Op_\square^\d$ is introduced as
\begin{equation*}
{\Op'}^\d_{\square'}:=-\Delta + \cL'(\d),\quad \d\in[-\e,\e],
\end{equation*}
in $L^2(\square')$ subject to periodic boundary conditions on $\p\square'$.
By $\L'_\delta$ we denote again the lowest eigenvalue of the operator ${\Op'}^\d_\square$ and by ${\Psi'}^\d$ an appropriately normalized associated eigenfunction.
Similarly as in the previous section, we can expand them in a Taylor series
\[
 {\L'}^\d
 =
 \d {\L_1'} + \d^2 {\L_2'} + O(\d^3)
 \quad
 \text{and}
 \quad
 \Psi^\d(x')=\frac{1}{\sqrt{|\square'|}} \left( \mathbf{1}+\d\Psi'_1(x')+\d^2\Psi'_2(x')\right)+O(\d^3)
\]
around $\d = 0$.
The expressions $\L_i'$ and $\Psi_i'$, $i \in \{1,2\}$ are given by analogous formulas as the corresponding $\L_i$ and $\Psi_i$ in Lemma~\ref{lem:Taylor}.
For convenience, we explicitly formulate these expressions in Lemma~\ref{lm3.2.1}.

Analogously to~\eqref{C1} and~\eqref{C2} we treat the following cases:
\begin{enumerate}\def\theenumi{\textbf{Case \Roman{enumi}'}}
\item \label{C1'}  The constant $\L'_1$ is strictly negative.

\item \label{C2'} The constant $\L'_1$ is zero and
\begin{equation*}
b>-1,\quad \eta:=-(b+1) \L'_2 + (1-b) \big(\RE (\Phi'_1-\Psi'_1), \RE \cL'_1\mathbf{1}\big)_{L^2(\square')}>0,
\end{equation*}
where $\Phi_1' \in \H2(\square')$ is the unique solution to the problem
\begin{equation*}
-\Delta\Phi_1' = -\cL_1 \mathbf{1}\quad \text{in}\quad \square',\qquad \frac{\p\Phi'_1}{\p\nu}=0\quad\text{on}\quad\p\square',
\end{equation*}
orthogonal to $\mathbf{1}$ in $L^2(\square')$.
\end{enumerate}

Lemma~\ref{lm:minimal-value} carries over verbatim which means that for sufficiently small $t_0$
we have
\[
 \forall \e \in (0, t_0] \colon
 \quad
 \min_{\delta \in [\e b, \e]} {\L'}^\d = {\L'}^\e.
\]
We also introduce on the lateral boundary $\p\square'$ the analogue of the function $\rho^{\epsilon}$:
\begin{equation*}
{\rho'}^{\epsilon}:=\frac{1}{{\Psi'}^{\epsilon}} \frac{\p{\Psi'}^{\epsilon}}{\p\nu},
\end{equation*}
where $\nu$ is the outward normal.

\setcounter{assumpt}{12}
The place of Assumption~\ref{A1} will be taken by
\begin{assumpt}
  \label{A1'}
  For all $\e \in [0,t_0]$ the function ${\rho'}^{\epsilon}$ is piecewise continuous on $\p\square'$.
  Furthermore,
  \begin{itemize}
   \item
    If \eqref{C1'} holds, then the asymptotic identity
    \begin{equation*}
    \sup\limits_{\overline{\p\square'}} \big|{\rho'}^{\epsilon}-\epsilon\rho'_1\big|=O(\e^2)
    \end{equation*}
    holds true, where
    \begin{equation}\label{3.2.12d}
    \rho'_1:=\frac{\p\Psi'_1}{\p\nu},
    \end{equation}
    {is piecewise continuous on $\p\square'$.}
   \item
    If \eqref{C2'}  holds, then the asymptotic identity
    \begin{equation*}
    \sup\limits_{\overline{\p\square'}} \big|{\rho'}^{\epsilon}-\epsilon\rho'_1-\epsilon^2\rho'_2\big|=O(\e^3)
    \end{equation*}
    holds true, where $\rho'_1$ is given by (\ref{3.2.12d}),
    \begin{equation*}
    \rho_2:= \frac{\p\Psi'_2}{\p\nu} - \Psi'_1 \frac{\p\Psi'_1}{\p\nu},
    \end{equation*}
    and $\rho'_1$, $\rho'_2$ are piecewise continuous on $\p\square'$.
   \end{itemize}
\end{assumpt}

\setcounter{assumpt}{17}
\begin{assumpt}\label{primeB1+2}
Assumptions~\ref{B1} and~\ref{B2} remain almost unchanged: we just replace $\square$ by $\square'$ in their formulation.
\end{assumpt}

Combining identity~\eqref{5.24a} with the results of the previous section, we arrive at the following analogous results on the almost sure spectrum and on Anderson localization near the bottom of the spectrum of ${\Op'}^\e(\om)$.

\begin{theorem}
Let $t_0 > 0$ be sufficiently small.
Then, for all $\e \in [0, t_0]$ there exists a closed set $\Sigma'_\e$ such that
\begin{equation*}
\spec({\Op'}^\e(\om))=\Sigma'_\e\quad\PP-a.s.
\end{equation*}
The set $\Sigma'_\e$ is equal to the closure of the union of spectra for all periodic realizations of ${\Op'}^\e(\om)$:
\begin{equation*}
\Sigma'_\e=\overline{\bigcup\limits_{N\in \NN} \quad \bigcup\limits_{\xi \ \text{is $2^N \G$-periodic}} \spec\big({\Op'}^\e(\xi)\big)},
\end{equation*}
where the second union is taken over all sequences $\xi: \G\to\supp \mu $, which are periodic  with respect to   the sublattice $2^N \G:=\{2^N q:\, q\in\G\}$.

Assume that we are in \eqref{C1'} or in \eqref{C2'} and Assumption~\ref{A1'} holds, then, assuming $t_0 > 0$ sufficiently small, we have
\begin{equation*}
\min\Sigma'_\e={\L'}^{\epsilon}.
\end{equation*}
\end{theorem}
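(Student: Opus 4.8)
The statement to prove is the final theorem in the excerpt — the analogue of Theorem~\ref{th2.1} for the operator ${\Op'}^\e(\om)$ acting on $L^2(\RR^n)$, asserting (a) existence of a deterministic almost-sure spectrum $\Sigma'_\e$, (b) its description as the closure of the union of periodic spectra, and (c) the identification $\min\Sigma'_\e = {\L'}^\e$ under Case I$'$/II$'$ and Assumption A1$'$.

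The plan is to deduce everything by transfer from the layer setting via the tensor-product decomposition $\Op^\e(\xi)={\Op'}^\e(\xi)\otimes I + I\otimes\Op^\bot$ (the excerpt's \eqref{5.24b}), which gives the spectral mapping $\spec(\Op^\e(\xi))=\bigcup_{m\geq0}\big(\spec({\Op'}^\e(\xi))+m^2\big)$ as in \eqref{5.24a}. I would argue in the order (a)$\to$(b)$\to$(c).

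For part (a): ergodicity of $\om\mapsto{\Op'}^\e(\om)$ with respect to the $\Gamma$-action of shifts on $\Omega=\times_{k\in\Gamma}[-1,1]$ follows from independence of the $\om_k$ (the shift group acts ergodically, indeed mixingly, on the product measure), exactly as in the layer case; and the map $\om\mapsto{\Op'}^\e(\om)$ is $\Gamma$-equivariant via the unitaries $\cS'(k)$. The standard argument — measurability of the resolvent in $\om$, equivariance, and the $0$–$1$ law — yields a closed deterministic set $\Sigma'_\e$ with $\spec({\Op'}^\e(\om))=\Sigma'_\e$ almost surely. This is verbatim the proof of the first part of Theorem~\ref{th2.1} with $\square$ replaced by $\square'$ and $\cS$ by $\cS'$, so I would simply cite Section~\ref{s:minimum}; alternatively one reads it off from the layer result: since $\spec(\Op^\e(\om))=\bigcup_m(\spec({\Op'}^\e(\om))+m^2)$ is almost surely equal to the deterministic $\Sigma_\e$, and the $m=0$ term is isolated in the relevant range (near the bottom), $\spec({\Op'}^\e(\om))$ is almost surely deterministic.

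For part (b): the description \eqref{2.19} of $\Sigma_\e$ as the closure of the union over $2^N\Gamma$-periodic configurations $\xi$ of $\spec(\Op^\e(\xi))$ is established (Section~\ref{s:minimum}). Applying the spectral mapping to a fixed periodic $\xi$ gives $\spec(\Op^\e(\xi))=\bigcup_{m\geq0}(\spec({\Op'}^\e(\xi))+m^2)$; taking unions over all periodic $\xi$ and then closures, and using that $\bigcup_m(\,\cdot\,+m^2)$ commutes with unions and that $\overline{\bigcup_m(S+m^2)}=\bigcup_m(\overline S+m^2)$ for $S$ bounded below (the translates $+m^2$ are locally finite), one gets $\Sigma_\e=\bigcup_{m\geq0}\big(\Sigma'^{\mathrm{per}}_\e+m^2\big)$ where $\Sigma'^{\mathrm{per}}_\e:=\overline{\bigcup_N\bigcup_{\xi\ 2^N\Gamma\text{-per}}\spec({\Op'}^\e(\xi))}$. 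On the other hand $\Sigma_\e=\bigcup_m(\Sigma'_\e+m^2)$ from part (a). Intersecting both expressions with a bounded interval near $\min\Sigma_\e$ below $\min\Sigma_\e+1$, where only the $m=0$ term contributes, yields $\Sigma'_\e\cap(-\infty,\min\Sigma'_\e+1)=\Sigma'^{\mathrm{per}}_\e\cap(-\infty,\min\Sigma'_\e+1)$; but since both sides are determined globally by periodic approximations (apply the same reasoning with $m^2$ shifted, or invoke the general Bloch/periodic-approximation theorem directly for ${\Op'}^\e$ as in \cite[Sect. 3.7]{BorisovGV-16}), one concludes $\Sigma'_\e=\Sigma'^{\mathrm{per}}_\e$. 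Cleanest is to note that the periodic-approximation argument of Section~\ref{s:minimum} never used the transversal variable and so applies directly to ${\Op'}^\e$; I would phrase it that way and keep the tensor argument only as a consistency check.

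For part (c): by the tensor decomposition $\min\Sigma_\e=\min\Sigma'_\e+\min\spec(\Op^\bot)=\min\Sigma'_\e+\L_0=\min\Sigma'_\e$ (recall $\L_0=0$ with Neumann conditions, and $\L_0$ here is the transversal ground state). Theorem~\ref{th2.1}, under Case I or II and Assumption A1, gives $\min\Sigma_\e=\L^\e$, where $\L^\e$ is the ground state of $\Op^\d_\square$. But $\Op^\d_\square=-\Delta+V_0+\cL(\d)$ on $\square=\square'\times(0,\pi)$ with $V_0=0$ and the extended $\cL(\d)=\cL'(\d)$ acting trivially in $x_{n+1}$, so $\Op^\d_\square={\Op'}^\d_{\square'}\otimes I+I\otimes\Op^\bot$ and hence $\L^\d={\L'}^\d+\L_0={\L'}^\d$. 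Thus $\min\Sigma'_\e=\min\Sigma_\e=\L^\e={\L'}^\e$. One must only check that the hypotheses match: Case I$'$ (resp. II$'$) for ${\Op'}$ implies Case I (resp. II) for the extended $\Op$ — indeed $\L_1=\L'_1$, $\L_2=\L'_2$, $\Psi_0=\1/\sqrt{\pi|\square'|}$, and the defining data of $\eta$ in \eqref{eq:definition-eta} reduce to the primed ones upon factoring out the constant transversal mode (here $\Phi_1$ extends $\Phi'_1$ constantly, $\Psi_1$ extends $\Psi'_1$, etc.); and Assumption A1$'$ for ${\rho'}^\e$ implies Assumption A1 for $\rho^\e$ because $\rho^\e$, being the logarithmic normal derivative of $\Psi^\e$ on $\g=\p\square'\times(0,\pi)$, is independent of $x_{n+1}$ and equals ${\rho'}^\e$ (the normal derivative in the $x_{n+1}$-direction on the top/bottom is governed by the Neumann condition, not by $\g$). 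I would spell this reduction out as a short lemma.

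The main obstacle is the bookkeeping in part (c): verifying carefully that the extended operator genuinely satisfies Case I/II and Assumption A1 whenever the primed data do, i.e. tracking how every object in \eqref{eq:Taylor}, \eqref{eq:definition-eta}, \eqref{2.12b}, and Assumption~\ref{A1} behaves under the constant extension in the transversal variable — in particular confirming that the normalization $\Psi_0=\1/\sqrt{\pi|\square'|}$ is consistent, that $O(\d^3)$ error terms are uniform, and that ``piecewise continuous on $\g$'' transfers from ``piecewise continuous on $\p\square'$''. All of this is routine but must be done; everything else is a direct quotation of Theorem~\ref{th2.1}, Lemma~\ref{lem:Taylor}, and Lemma~\ref{lm:minimal-value} combined with the elementary spectral mapping \eqref{5.24a}.
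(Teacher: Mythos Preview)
Your proposal is correct and follows essentially the same route as the paper: the paper deduces the entire-space theorem from Theorem~\ref{th2.1} via the tensor decomposition \eqref{5.24b}--\eqref{5.24a}, noting that near the spectral bottom only the $m=0$ mode contributes, so Theorem~\ref{th2.1}, Lemma~\ref{lem:Taylor}, and Lemma~\ref{lm:minimal-value} transfer directly. Your observation that the tensor argument alone does not yield the global periodic characterization in part~(b), and that one should instead cite the periodic-approximation result (i.e.\ \cite[Theorem~2.1]{BorisovHEV-18}) directly for ${\Op'}^\e$, is well taken --- this is exactly what the paper does for parts~(a) and~(b) of Theorem~\ref{th2.1} itself (see Section~\ref{ss-identify_minimum}), and the same citation applies verbatim to ${\Op'}^\e$.
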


\begin{corollary}
Let $t_0 > 0$ be sufficiently small.
If \eqref{C1'} holds, then for all $\e \in (0, t_0]$
\[
   \inf \Sigma'_\e \leq \inf \Sigma'_0 - c \e,
   \quad
   \text{where}
   \quad
   c = - \L'_1/2 > 0.
\]
If \eqref{C2'} holds, then for all $\e \in(0,t_0]$ we have
\[
 \inf \Sigma'_\e \leq \inf \Sigma'_0 - c \e^2
   \quad
   \text{where}
   \quad
   c = - \L'_2/2 > 0.
\]
\end{corollary}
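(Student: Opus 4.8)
The plan is to deduce this Corollary from its layer-analogue, namely Corollary~\ref{c:expansion}, by exploiting the tensor decomposition~\eqref{5.24b} and the spectral identity~\eqref{5.24a}. First I would recall that, thanks to the constant extension of the operators $\cL_i'$ to the $(n+1)$-st variable and the choice of Neumann boundary conditions with $d=\pi$, the layer operator $\Op^\e(\xi)$ constructed in Section~\ref{s:entire-space} is precisely of the abstract form~\eqref{2.2} treated in Section~\ref{s:results}, with $V_0\equiv 0$. In particular the unperturbed transversal operator $\Op^\bot$ has lowest eigenvalue $\L_0=0$ with constant ground state, so all the hypotheses under which Theorem~\ref{th2.1}, Lemma~\ref{lem:Taylor}, Lemma~\ref{lm:minimal-value} and Corollary~\ref{c:expansion} were proven are available here. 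Moreover, as is spelled out in the excerpt, \eqref{C1'} (resp.~\eqref{C2'}) for $\Op'^\e$ is exactly the condition \eqref{C1} (resp.~\eqref{C2}) for the extended operator $\Op^\e$, the Taylor coefficients satisfy $\L_0=0$, $\L_1=\L_1'$, $\L_2=\L_2'$, and Assumption~\ref{A1'} is Assumption~\ref{A1} for the extended family; this is the content of the sentence ``Since Theorem~\ref{th2.1}, Lemma~\ref{lem:Taylor}, and Lemma~\ref{lm:minimal-value} concern only properties in a small neighbourhood of the spectral bottom, they immediately imply analogous statements for the operator ${\Op'}^\e(\xi)$.''

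Next I would observe that~\eqref{5.24a} gives, $\PP$-a.s.,
\[
 \inf\Sigma'_\e = \min\spec(\Op'^\e(\om)) = \min\spec(\Op^\e(\om)) = \min\Sigma_\e,
\]
since the extra summands $m^2$ with $m\geq 1$ only push spectrum upward by at least $1$, and for $t_0$ small the perturbation moves the bottom by at most $O(\e)=o(1)$, so the infimum of $\spec(\Op^\e(\om))$ is attained in the $m=0$ block. The same reasoning applied at $\e=0$ yields $\inf\Sigma'_0=\min\Sigma_0=\L_0=0$. Now Corollary~\ref{c:expansion} applied to the extended operator gives, for all $\e\in(0,t_0]$, that $\min\Sigma_\e\leq\L_0-c\e=-c\e$ with $c=-\L_1/2>0$ in case \eqref{C1} (equivalently \eqref{C1'}), and $\min\Sigma_\e\leq\L_0-c\e^2=-c\e^2$ with $c=-\L_2/2=-\L_2'/2>0$ in case \eqref{C2}. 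Combining these equalities and inequalities yields precisely
\[
 \inf\Sigma'_\e \leq \inf\Sigma'_0 - c\e
 \quad\text{resp.}\quad
 \inf\Sigma'_\e \leq \inf\Sigma'_0 - c\e^2,
\]
with the claimed constants.

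The only genuine point that needs care — and hence what I would regard as the main (modest) obstacle — is the justification that the infimum in~\eqref{5.24a} is indeed realized by the $m=0$ tensor factor and not shifted by the higher transversal modes; this requires a crude a~priori bound $|\min\spec(\Op^\e(\om))-\L_0|<1$ uniform in $\om$, which follows from the relative boundedness of $\cL^\e$ with relative bound less than one (stated after the Convention) together with $\e\leq t_0$ small, exactly as in the proof of the layer statements. Everything else is bookkeeping: matching the primed and unprimed Taylor data, checking that \eqref{C1'}/\eqref{C2'} and Assumption~\ref{A1'} translate into \eqref{C1}/\eqref{C2} and Assumption~\ref{A1}, and quoting Corollary~\ref{c:expansion}. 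I would keep the write-up short, essentially a paragraph, since the substance is entirely inherited from the layer case via~\eqref{5.24b}–\eqref{5.24a}.
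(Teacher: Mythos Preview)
Your proposal is correct and follows essentially the same approach as the paper: the Corollary is stated without a standalone proof, being the immediate whole-space transcription of Corollary~\ref{c:expansion} via the tensor decomposition~\eqref{5.24b}--\eqref{5.24a} that Section~\ref{s:entire-space} sets up precisely for this purpose. Your identification $\inf\Sigma'_\e=\min\Sigma_\e$ through the $m=0$ block, together with $\L_0=0$, $\L_i=\L_i'$, is exactly the mechanism the paper intends; the only superfluous detail is the invocation of Assumption~\ref{A1'}, since the inequality $\inf\Sigma'_\e\leq{\L'}^\e$ already follows from the variational upper bound~\eqref{4.3} without it.
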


We also obtain the following analogue of Theorem~\ref{thm:lower_bound_ground_state}:

\begin{theorem}
 \label{thm:lower_bound_ground_state'}
For $N\in\NN$, we denote by  $\square'_N$ the open interior of the set
\begin{align*}
&\square'_N:=\{x'\in \RR^{n}:\, x'=\sum\limits_{j=1}^{n} a_j e_j,\ a_j\in[0,N)\},
\end{align*}
$\G_{N}:= {\square'}_{N} \cap \Gamma$, and by ${\L'}_{N}^\e(\xi)$ the lowest eigenvalue of the operator
\begin{equation*}
{\Op'}_{N}^\e(\xi):=-\D_{x'} {\cL'}_{N}^\e(\xi),\quad {\cL'}_{N}^\e(\xi):=\sum\limits_{k\in\G_{N}} \cS(k) \cL'(\e\xi_k) \cS(-k)
\end{equation*}
on $\square'_N$ subject to the Mezincescu boundary condition
$\frac{\p u}{\p\nu}= \rho'^{\epsilon}u$ on $\p \square'_N$.

Let $t_0 > 0$ be sufficiently small. Then
there exist constants $N_1\in \NN$ and $c_0>0$,
depending exclusively on the operators $\cL'(t)$,  $t \in [-T,T]$, such that for all $N \in \NN$ with $N \geq N_1$, $\xi\in \Omega$, and $\e \in (0, t_0]$,  we have
\begin{align}
&{\L'}_{N}^\e(\xi)-\L'^{\epsilon}\geqslant \frac{\e |\L_1|}{4 N^n} \sum\limits_{k\in\G_{N}} (1-\xi_k) \quad \text{for}\ \e<c_0 N^{-2}\ \text{in \eqref{C1}},
\\
&{\L'}_{N}^\e(\xi)-\L'^{\epsilon}\geqslant \frac{\eta \e^2}{N^n} \sum\limits_{k\in\G_{N}} (1-\xi_k) \quad\hphantom{11} \text{for}\ \e<c_0 N^{-4}\ \text{in \eqref{C2}}
\end{align}
\end{theorem}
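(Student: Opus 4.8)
The plan is to reduce Theorem~\ref{thm:lower_bound_ground_state'} to the already established Theorem~\ref{thm:lower_bound_ground_state} (equivalently Theorem~\ref{th4.1}) via the tensor-product structure \eqref{5.24b}--\eqref{5.24a}. Recall that, after extending the operators $\cL_i'$ constantly in the $x_{n+1}$-variable and choosing $d = \pi$ with Neumann conditions, the operator $\Op^\e(\xi)$ on the layer decomposes as ${\Op'}^\e(\xi)\otimes I + I\otimes\Op^\bot$, the transverse operator $\Op^\bot$ has ground state energy $\L_0 = 0$ with constant ground state $\Psi_0$, and the Taylor coefficients $\L_i$, $\Psi_i$ of Lemma~\ref{lem:Taylor} for the full problem on $\square$ coincide with the primed coefficients $\L_i'$, $\Psi_i'$ on $\square'$ (this is why Lemma~\ref{lm3.2.1} records the primed formulas). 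In particular $\L_1 = \L_1'$, $\eta$ is the same quantity in both formulations, and \eqref{C1'}, \eqref{C2'} are literally \eqref{C1}, \eqref{C2} for the extended operators.

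First I would verify that the finite-volume box operator ${\Op'}_N^\e(\xi)$ on $\square'_N$ with Mezincescu boundary condition $\partial u/\partial\nu = {\rho'}^\e u$ on $\partial\square'_N$ tensorizes in exactly the same way as the infinite-volume operator: namely $\Op_N^\e(\xi)$ on $\Pi_N = \square'_N\times(0,\pi)$ — with boundary condition \eqref{eq:boundary-condition-B} (here Neumann) on $\partial\Pi\cap\partial\Pi_N$ and Mezincescu condition $\partial u/\partial\nu = \rho^\e u$ on $\g_N = \partial\Pi_N\setminus\partial\Pi$ — equals ${\Op'}_N^\e(\xi)\otimes I + I\otimes\Op^\bot$. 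This is because $\cL_N^\e(\xi)$ acts trivially in $x_{n+1}$, the set $\g_N$ is $\partial\square'_N\times(0,\pi)$ so the Mezincescu condition there is $\partial u/\partial\nu = \rho^\e u$ with $\rho^\e = {\rho'}^\e$ independent of $x_{n+1}$, and $\Gamma_N$ (the lattice points in the layer box) is canonically identified with $\Gamma_N$ in $\square'_N$. Consequently the spectrum of $\Op_N^\e(\xi)$ is $\bigcup_{m=0}^\infty\bigl(\spec({\Op'}_N^\e(\xi)) + m^2\bigr)$, and since $\Op^\bot\geq 0$ with bottom eigenvalue $0$, the lowest eigenvalue satisfies $\L_N^\e(\xi) = {\L'}_N^\e(\xi)$. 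Likewise $\L^\e = {\L'}^\e$ by the same argument applied to the single-cell operators $\Op_\square^\d$ and ${\Op'}^\d_{\square'}$ (both already noted in the excerpt).

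Given these identifications, the two asserted inequalities for ${\L'}_N^\e(\xi) - {\L'}^\e$ are exactly the inequalities of Theorem~\ref{thm:lower_bound_ground_state} for $\L_N^\e(\xi) - \L^\e$, with the same $\L_1$ and $\eta$. The only point requiring a line of comment is that the constants $N_1$ and $c_0$ in Theorem~\ref{thm:lower_bound_ground_state} are asserted to depend only on the operators $\cL(t)$ and on $V_0$; here $V_0 = 0$ and the $\cL_i$ are the trivial extensions of the $\cL_i'$, so these constants depend only on the $\cL'(t)$, $t\in[-T,T]$, as claimed. One should also check that Assumption~\ref{A1'} for ${\rho'}^\e$ on $\partial\square'$ implies Assumption~\ref{A1} for $\rho^\e$ on $\g$ under the constant extension — immediate, since $\rho_1 = \rho_1'$, $\rho_2 = \rho_2'$ after noting $\Psi_0 = 1/\sqrt{\pi|\square'|}$ absorbs into the normalization — and that Assumption~\ref{primeB1+2} gives Assumption~\ref{B1+2} for the extended operators, which is equally routine.

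I do not anticipate a genuine obstacle here: the entire content of the statement is the tensorization bookkeeping, and the substantive estimate has already been done in Theorem~\ref{th4.1}. The one place to be slightly careful is matching the normalization of $\Psi_0$ and hence the boundary functions $\rho_1,\rho_2$ versus $\rho_1',\rho_2'$ (the factor $\sqrt{\pi|\square'|}$ versus $\sqrt{|\square'|}$), and making sure the Mezincescu boundary condition on $\g_N$ really is constant in the transverse variable so that the variables separate in the finite box exactly as in infinite volume; once that is observed, the proof is a two-line invocation of Theorem~\ref{thm:lower_bound_ground_state}.
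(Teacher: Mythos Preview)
Your proposal is correct and follows precisely the route the paper takes: the paper does not give a separate proof of this theorem but simply presents it as the analogue of Theorem~\ref{thm:lower_bound_ground_state} obtained via the tensor-product identity \eqref{5.24b}--\eqref{5.24a}, and your write-up supplies exactly the bookkeeping (tensorization of the finite-volume box operators, matching of the Mezincescu data $\rho^\e={\rho'}^\e$, and identification $\L_N^\e(\xi)={\L'}_N^\e(\xi)$, $\L^\e={\L'}^\e$) that the paper leaves implicit.
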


Now, we can again combine Theorem~\ref{thm:lower_bound_ground_state'} with Combes-Thomas estimates and deduce a corresponding initial scale estimate for the operator ${\Op'}^\e(\omega)$.
Finally, we note that the Wegner estimate of
Theorem~\ref{thm:Wegner_intro} can also be proved for the operator ${\Op'}^\e(\omega)$.
This is clear from the proof in Section~\ref{s:Wegner} since the fact that
Theorem~\ref{thm:Wegner_intro} is formulated on the layer $\Pi$ only enters in the volume bound.
We conclude:

\begin{theorem}
  Assume that $t_0 > 0$ is sufficiently small.
  There are $D$ depending exclusively on the operators $\cL'(t)$, $t\in [-T,T]$,
    $C_{n,h_0}$ depending exclusively on $n$, and $h_0$,
  as well as $C_{n}$ depending merely on $n$, and the lattice $\Gamma$,
  such that for all $\e \in (0, t_0]$, all $\alpha \in \Gamma$, and all $N \in \NN$
  the following hold:
\begin{enumerate}[(i)]
  \item
  For all $E \leq \Lambda_0 - D \e^2$ and  all $\kappa \leq D \e^2/ 4$ we have
  \begin{align*}
    \PP ( \dist(\sigma(\Op_{\a,N}^\e(\omega)), E) \leq \kappa )
    \leq
    \frac{C_{n,h_0}}{D \e^2}  [1+C_{n}|\square'| N^n]\cdot \kappa \, N^{n}.
  \end{align*}
  \item
  Assume that $\cL_2 \leq 0$.
  Then for all $E \leq \Lambda_0 - D \e^3$ and all $\kappa \leq D \e^3/ 4$,  we have
  \begin{equation*}
    \PP ( \dist(\sigma(\Op_{\a,N}^\e(\omega)), E) \leq \kappa )
    \leq
    \frac{C_{n,h_0}}{D \e^3}  [1+C_{n}|\square'| N^n]\cdot \kappa \, N^{n}.
  \end{equation*}
 \end{enumerate}
\end{theorem}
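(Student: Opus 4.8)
The plan is to reduce the statement on $\RR^n$ to the already established Wegner estimate on the layer $\Pi$ (Theorem~\ref{thm:Wegner_intro}, proved in Section~\ref{s:Wegner}), exploiting the tensor product structure~\eqref{5.24b} and the spectral decomposition~\eqref{5.24a}. First I would recall the construction from Section~\ref{s:entire-space}: one takes $d=\pi$, imposes Neumann boundary conditions on $\p\Pi$, and extends the operators $\cL'_i$ constantly in the $(n+1)$th variable to obtain operators $\cL_i$ on $\square$. With these choices $\Op^\e(\xi)=\Op'^\e(\xi)\otimes I + I\otimes\Op^\bot$ (in the sense of~\eqref{5.24b}), $\Op^\bot$ has Neumann ground state eigenvalue $\L_0=0$ with constant eigenfunction, and the Taylor coefficients $\L_i$, $\Psi_i$ of the layer operator coincide with the primed quantities $\L'_i$, $\Psi'_i$ by the very definition of the extension. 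In particular $\L_0$ as it appears in Theorem~\ref{thm:Wegner_intro} equals $0$, which is the $\L_0$ appearing in the statement to be proved, and the hypothesis $\cL_2\le 0$ in part (ii) is equivalent to $\cL'_2\le 0$.

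The key step is the observation, already noted in the excerpt after~\eqref{5.24a}, that on the finite box the same separation of variables holds: the finite-volume restriction $\Op_{\a,N}^\e(\omega)$ on $\Pi_N$ with boundary condition~\eqref{eq:boundary-condition-B} (here Neumann) on $\p\Pi\cap\p\Pi_N$ and Mezincescu conditions on $\g_N$ decomposes as $\Op_{\a,N}^\e(\omega)\otimes I + I\otimes\Op^\bot_N$, where $\Op^\bot_N$ is the Neumann Laplacian on $(0,\pi)$; hence
\begin{equation*}
\sigma(\Op_{\a,N}^\e(\omega)) = \bigcup_{m=0}^\infty \Big( \sigma({\Op'}_{\a,N}^\e(\omega)) + m^2 \Big).
\end{equation*}
Since $m^2\ge 0$, for any $E\le\L_0 = 0$ and any $\kappa>0$ the event $\dist(\sigma(\Op_{\a,N}^\e(\omega)),E)\le\kappa$ forces the $m=0$ branch to be within $\kappa$ of $E$, i.e.\ $\dist(\sigma({\Op'}_{\a,N}^\e(\omega)),E)\le\kappa$, and conversely. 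Thus the two events coincide (for $E\le 0$), so
\begin{equation*}
\PP\big(\dist(\sigma({\Op'}_{\a,N}^\e(\omega)),E)\le\kappa\big) = \PP\big(\dist(\sigma(\Op_{\a,N}^\e(\omega)),E)\le\kappa\big),
\end{equation*}
and the right-hand side is bounded by Theorem~\ref{thm:Wegner_intro}. In the resulting bound the volume factor is $|\square| N^n = \pi |\square'| N^n$, and since the constant $d=\pi$ can be absorbed into $C_{n,V_0}$ (which here, with $V_0=0$, depends only on $n$ and $\Gamma$), one obtains exactly the claimed inequality with $C_n$ in place of $C_{n,V_0}|\square|$ and $|\square'|$ made explicit; the constants $D$ and $C_{n,h_0}$ are inherited verbatim, $D$ now depending on the $\cL'(t)$ through the identification of Taylor coefficients.

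There is essentially no serious obstacle here; the proof is a bookkeeping argument once Theorem~\ref{thm:Wegner} is in hand. The one point that requires a little care — and which I would state explicitly — is the separation of variables at finite volume: one must check that the Mezincescu boundary condition on $\g_N$ is compatible with the tensor decomposition, which it is because $\rho^\e$ for the extended operator is (by Assumption~\ref{A1'} and the constancy of the extension in $x_{n+1}$) independent of $x_{n+1}$ and the lateral and transversal boundary portions $\g_N$ and $\p\Pi\cap\p\Pi_N$ are disjoint and "product-like". The other minor point is to confirm that the restriction $E\le\L_0$ in Theorem~\ref{thm:Wegner_intro} — namely $E\le\L_0 - D\e^2$, respectively $E\le\L_0 - D\e^3$ — is exactly the hypothesis in the statement being proved, so no loss occurs. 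Assembling these remarks gives the theorem.
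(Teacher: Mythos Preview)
Your reduction via the tensor product is correct and yields the stated bound, but it is not the route the paper takes. The paper simply observes that the entire proof of the abstract Wegner estimate (Theorem~\ref{thm:Wegner}) in Section~\ref{s:Wegner} goes through verbatim for the operators ${\Op'}^\e_{\a,N}(\omega)$ on $\square'_N\subset\RR^n$: the vector-field argument, the Birman--Schwinger reformulation, and the partial integration in $\omega_j$ are all dimension- and domain-independent, and the only place the ambient geometry enters is the Weyl count at the end, which on $\RR^n$ gives $C_n|\square'|N^n$ in place of $C_{n,V_0}|\square|N^n$. So the paper's proof is literally ``re-read Section~\ref{s:Wegner} with $\Pi_{\a,N}$ replaced by $\square'_N$''.

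Your approach instead pulls the $\RR^n$ statement back to the layer statement via the finite-volume separation of variables; this is a genuine alternative and has the virtue of not asking the reader to re-traverse the Wegner proof. Two small comments: first, you only need the inclusion $\{\dist(\sigma({\Op'}^\e_{\a,N}),E)\le\kappa\}\subset\{\dist(\sigma(\Op^\e_{\a,N}),E)\le\kappa\}$, which is immediate from the $m=0$ branch, and your stronger claim of equality of events requires the side remark that the $m\ge 1$ branches are bounded below by ${\L'}^\e+1$ and hence stay at distance $>\kappa$ from $E$ only for $\e$ small (which is fine under the hypothesis $t_0$ small, but is an extra step you do not need). Second, your route inherits the $(n{+}1)$-dimensional Weyl constant and the factor $|\square|=\pi|\square'|$ from the layer, giving slightly cruder constants than the paper's direct re-run; this is harmless for the statement as formulated.
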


In conclusion, we have again all ingredients needed to start the multi-scale analysis and prove Anderson localization near the bottom of $\Sigma'_\e$ for sufficiently small $\e > 0$.

\begin{theorem}\label{thm:localization_C1'}
Assume that we are in \eqref{C1'}, Assumptions~\ref{A1'} and \ref{primeB1+2} hold,  and $t_0 > 0$ is sufficiently small.
Then there exists $C > 0$ such that for all $\e\in(0,t_0]$ the set
$\Sigma'_\e\cap[\L'^{\epsilon},\L'^{\epsilon}+C\e]$ is almost surely non empty and only contains pure point spectrum with exponentially decaying eigenfunctions.
\end{theorem}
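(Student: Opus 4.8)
The plan is to derive Theorem~\ref{thm:localization_C1'} from the layer version, Theorem~\ref{thm:localization_C1}, by exploiting the tensor decomposition~\eqref{5.24b}--\eqref{5.24a}. First I would record that under the construction of Section~\ref{s:entire-space} (height $d=\pi$, Neumann conditions, and $V_0=0$), the operators $\cL_i$ are the constant extensions of the $\cL_i'$, so Assumptions~\ref{A1'} and~\ref{primeB1+2} for the primed data are exactly Assumptions~\ref{A1} and~\ref{B1+2} for the extended layer data; likewise \eqref{C1'} translates into \eqref{C1}. Hence Theorem~\ref{thm:localization_C1} applies verbatim to the layer operator $\Op^\e(\om)$ and yields a constant $C>0$ such that, almost surely, $\Sigma_\e\cap[\min\Sigma_\e,\min\Sigma_\e+C\e]$ is non-empty and exhibits dynamical (in particular spectral) localization, i.e. this interval contains only pure point spectrum with exponentially decaying eigenfunctions. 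I would also note that $\min\Sigma_\e=\L^\e$ and, by the separation-of-variables identity $\L^\e=\min\spec\Op^\e(\xi_{\min})$ realized in the transversal ground state $\Psi_0$, one has $\min\Sigma_\e=\min\Sigma'_\e={\L'}^\e$ after identifying the two almost-sure spectra via~\eqref{5.24a} (the $m=0$ term).

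Next I would transfer the conclusion back to $L^2(\RR^n)$. By~\eqref{5.24a}, $\spec(\Op^\e(\xi))=\bigcup_{m\geq 0}\bigl(\spec({\Op'}^\e(\xi))+m^2\bigr)$, and since the transversal gap is $1$, on the interval $I_\e:=[{\L'}^\e,{\L'}^\e+1]$ only the $m=0$ summand contributes; moreover $\Op^\e(\xi)\chi_{I_\e}(\Op^\e(\xi))$ and ${\Op'}^\e(\xi)\chi_{I_\e}({\Op'}^\e(\xi))$ are unitarily equivalent, the unitary being $u'\mapsto u'\otimes\Psi_0$ up to normalization. Choosing $t_0$ small enough that $C\e\leq C t_0<1$, the localization interval $[\min\Sigma_\e,\min\Sigma_\e+C\e]$ sits inside $I_\e$; spectral localization (pure point spectrum) and exponential decay of eigenfunctions are preserved under this unitary equivalence, because an eigenfunction $u$ of $\Op^\e(\om)$ with eigenvalue in $[\min\Sigma_\e,\min\Sigma_\e+C\e]$ has the product form $u=u'\otimes\Psi_0$ with $u'$ an eigenfunction of ${\Op'}^\e(\om)$ at the same eigenvalue, and since $\Psi_0$ is a bounded nonvanishing constant on the (bounded) transversal interval, $\|u(x',\cdot)\|_{L^2(0,\pi)}$ decays exponentially in $x'$ iff $|u'(x')|$ does. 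Finally, non-emptiness of $\Sigma'_\e\cap[{\L'}^\e,{\L'}^\e+C\e]$ follows from non-emptiness of the corresponding layer set together with the identification $\min\Sigma'_\e={\L'}^\e$ and the bijection $\Sigma_\e\cap I_\e=\Sigma'_\e\cap I_\e$ coming from~\eqref{5.24a}. This gives precisely the assertion of the theorem.

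I do not expect a genuine obstacle here: the whole argument is a reduction, and every nontrivial input (the characterization of the almost sure spectrum, the lower bound on the spectral shift through Corollary~\ref{c:expansion}, the Wegner and initial scale estimates, and the multi-scale analysis yielding Theorem~\ref{thm:localization_C1}) has already been established for the layer model in a form that applies to the extended operator $\Op^\e$. The only points requiring a little care are: (a) checking that the primed assumptions are literally the unprimed ones under the constant extension, so that no hidden hypothesis is lost; (b) keeping $t_0$ small enough that $Ct_0<1$, so that the localization window never reaches the first transversal threshold $\L_0+1=1$ and the unitary equivalence on $I_\e$ is available; and (c) phrasing the conclusion as ``pure point spectrum with exponentially decaying eigenfunctions'' rather than full dynamical localization — this is weaker than Theorem~\ref{thm:localization_C1} and is exactly what survives the product-structure transfer without having to re-run the dynamical bounds on $\RR^n$, though in fact dynamical localization also transfers since the relevant spectral projections are unitarily equivalent. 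I would therefore present the proof as: restate the hypotheses in unprimed form, invoke Theorem~\ref{thm:localization_C1}, and conclude by the unitary equivalence on $I_\e$ supplied by~\eqref{5.24b}--\eqref{5.24a}.
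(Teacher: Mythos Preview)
Your proposal is correct and follows exactly the route the paper takes: the paper's own proof is the single sentence at the start of Section~\ref{sec:MSA} stating that Theorems~\ref{thm:localization_C1'} and~\ref{thm:localization_C2'} ``follow simply by formula~(\ref{5.24b}),'' and your argument is precisely the unpacking of that reduction via the tensor decomposition and the unitary equivalence on $I_\e$. Your attention to the points (a)--(c) is appropriate and fills in the details the paper leaves implicit.
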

\begin{theorem}\label{thm:localization_C2'}
Assume that we are in \eqref{C2'}, Assumptions~\ref{A1'} and \ref{primeB1+2} hold,
and $t_0 > 0$ is sufficiently small.
Assume also that the operator $\cL'_2 $ is non-positive.
Then there exists $C > 0$ such that for all $\e\in(0,t_0]$ the set $\Sigma'_\e\cap[{\L'}^{\epsilon},{\L'}^{\epsilon}+C\e^2]$ is almost surely non empty and only contains pure point spectrum with exponentially decaying eigenfunctions.
\end{theorem}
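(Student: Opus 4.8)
The plan is to obtain Theorem~\ref{thm:localization_C2'} as a direct consequence of the corresponding layer result, Theorem~\ref{thm:localization_C2}, together with the separation of variables identity~\eqref{5.24a}. First I would extend, exactly as described in Section~\ref{s:entire-space}, the operator ${\Op'}^\e(\om)$ on $L^2(\RR^n)$ to the operator $\Op^\e(\om)$ on $L^2(\Pi)$ with $d=\pi$, $V_0=0$, and Neumann boundary conditions on $\p\Pi$, so that $\Op^\e(\xi)={\Op'}^\e(\xi)\otimes I + I \otimes \Op^\bot$ in the sense of~\eqref{5.24b}. Since $\spec(\Op^\bot)=\{m^2: m\in\NN_0\}$ with ground state the constant function, the bottom of the spectrum of $\Op^\e(\xi)$ coincides with that of ${\Op'}^\e(\xi)$, and more importantly $\Op^\e(\xi)\chi_I(\Op^\e(\xi))$ is unitarily equivalent to ${\Op'}^\e(\xi)\chi_I({\Op'}^\e(\xi))$ on the interval $I=[\min\spec({\Op'}^\e(\xi)),\min\spec({\Op'}^\e(\xi))+1]$, because the next transversal mode sits at energy $+1$ above.

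Next I would verify that the hypotheses of Theorem~\ref{thm:localization_C2} are satisfied by the extended operator. The assumptions \eqref{C2'}, \ref{A1'}, \ref{primeB1+2}, and non-positivity of $\cL'_2$ translate verbatim into \eqref{C2}, \ref{A1}, \ref{B1+2}, and $\cL_2\leq 0$ for the extended data, since the extension is constant in $x_{n+1}$ and $\Psi_0$ is simply the normalized constant; in particular the quantity $\eta$, the Taylor coefficients $\L_i$, $\Psi_i$, and the boundary functions $\rho_j$ for the extended operator are exactly the primed ones of Section~\ref{s:entire-space} (up to the trivial normalization factor $\sqrt{\pi}$). Hence Theorem~\ref{thm:localization_C2} gives a constant $C>0$ such that for all $\e\in(0,t_0]$ the set $\Sigma_\e\cap[\L^\e, \L^\e + C\e^2]$ is almost surely non empty and exhibits dynamical localization for $\Op^\e(\om)$. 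Taking $C$ smaller if necessary so that $C\e^2<1$ on $(0,t_0]$, this localization interval lies strictly inside $I$, where the unitary equivalence applies.

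The final step is to transport both conclusions through the unitary equivalence. Non-emptiness of $\Sigma'_\e\cap[\L'^\e,\L'^\e+C\e^2]$ follows directly from~\eqref{5.24a}, since the bottom portions of $\spec(\Op^\e(\om))$ and $\spec({\Op'}^\e(\om))$ coincide below energy $1$ and $\L^\e=\L'^\e$. For the localization statement, observe that an eigenfunction of $\Op^\e(\om)$ with eigenvalue in the localization interval must, by separation of variables, be of the form $u'(x')\Psi_0(x_{n+1})$ with $u'$ an eigenfunction of ${\Op'}^\e(\om)$ at the same eigenvalue; exponential decay of $u'(x')\Psi_0$ in $\Pi$ is equivalent to exponential decay of $u'$ in $\RR^n$ since $\Psi_0$ is a nonzero constant. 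Thus the spectrum of ${\Op'}^\e(\om)$ in $[\L'^\e,\L'^\e+C\e^2]$ is almost surely pure point with exponentially decaying eigenfunctions, which is the assertion. The main obstacle, though a minor one, is to make fully rigorous the claim that dynamical (or at least spectral-plus-exponential-decay) localization for $\Op^\e(\om)$ on the spectral subspace $\chi_I(\Op^\e(\om))$ descends to ${\Op'}^\e(\om)$; this requires checking that the tensor-product structure commutes with the spectral projections and with the notion of exponential decay in the $x'$-variables, which is routine but should be stated carefully rather than merely asserted.
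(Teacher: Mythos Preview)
Your proposal is correct and follows precisely the approach indicated by the paper: reduce the whole-space result to the layer result Theorem~\ref{thm:localization_C2} via the tensor-product decomposition~\eqref{5.24b} and the spectral identity~\eqref{5.24a}. The paper itself says only that Theorems~\ref{thm:localization_C1'} and~\ref{thm:localization_C2'} ``follow simply by formula~(\ref{5.24b})'', so your write-up is in fact more detailed than the original, and the care you take in checking that the primed hypotheses translate to the unprimed ones and that the localization interval stays below the first excited transversal mode is exactly what is needed.
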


\section{Examples}
\label{s:examples}

All examples we present, concern operators defined on the layer
as in Section~\ref{s:results}, but immediately extended by formula (\ref{5.24b})
to the full space setting on $\RR^{n}$ as in Section~\ref{s:entire-space}.

\subsection{Random potential} Our first example is the classical perturbation by a random potential. Here the operator $\cL(t)$ is the multiplication by a potential $W(t)$, which is defined as
\begin{equation*}
W(t)=t W_1 + t^2 W_2
\end{equation*}
where $W_i=W_i(x)$ are bounded measurable real functions.
>From Lemma~\ref{lem:Taylor}, we infer that the constant $\L_1$ is given by
\begin{equation*}
\L_1=\int\limits_{\square} W_1(x)|\Psi_0(x_{n+1})|^2\di x.
\end{equation*}
If $\L_1<0$, we are in \eqref{C1}.

If $\L_1 = 0$, then we infer from Lemma~\ref{lem:Taylor} that
\[
 W_1 \Psi_0
 =
 - (\Op_\square^0 - \L_0) \Psi_1
\]
whence
\begin{align*}
\L_2
&=
\int\limits_{\square} W_2(x)\Psi_0^2(x_{n+1})\di x
+
\int\limits_{\square} W_1(x) \Psi_0 \Psi_1 \di x\\
&=
\int\limits_{\square} W_2(x)\Psi_0^2(x_{n+1})\di x
-
\int\limits_{\square} \Psi_1  (\Op_\square^0 - \L_0) \Psi_1 \di x
<
\int\limits_{\square} W_2(x)\Psi_0^2(x_{n+1})\di x
\end{align*}
since $\Op_\square^0 - \L_0$ is a nonnegative operator and $\Psi_1$ is not its (unique) ground state.
We conclude that if we assume $W_2 \leq 0$, as needed in Theorem~\ref{thm:localization_C2}, then we automatically have $\L_2 < 0$ and are in \eqref{C2} provided
\[
\eta
=
-(b+1)\L_2+(1-b)\int\limits_{\square} W_1(x)(\Phi_1(x)-\Psi_1(x))\Psi_0(x_{n+1})\di x > 0.
\]

Assumption~\ref{A1} is satisfied once the potentials $V_0$ and $W_i$ are smooth enough, for instance $V_0, W_i\in C^q(\overline{\square})$ for some $q\in(0,1)$. In this case by standard Schauder estimates we get that $\Psi^\d$, $\Psi_1$, $\Psi_2$ belong to the H\"older space $C^{2+q}(\overline{\square})$ and the asymptotics (\ref{2.9})  holds in the sense of the norm in this space.
 Assumptions~\ref{B1},~\ref{B2} are obviously true for the considered example.

\subsection{Integral operator} Our next example is a random integral operator corresponding to
\begin{equation*}
(\cL_i u)(x)=\int\limits_{\square} K_i(x,y)u(y)\di y,\quad i=1,2,
\end{equation*}
where $K_i\in L^2(\square\times\square)$ are symmetric kernels:
\begin{equation*}
K_i(y,x)=\overline{K_i(x,y)}.
\end{equation*}
Here, the constant $\L_1$ is given by the integral
\begin{equation*}
\L_1=\int\limits_{\square\times\square} K_1(x,y)\Psi_0(x_{n+1})\Psi_0(y_{n+1})\di x\di y,
\end{equation*}
and if $\L_1<0$, we are in \eqref{C1}.

If $\L_1=0$, conditions (\ref{2.12}), (\ref{2.12b}) take the form
\begin{equation*}
(\Op^0_\square-\L_0)\Psi_1=-\int\limits_{\square} K_1(\cdot,y)\Psi_0(y_{n+1})\di y,
\end{equation*}
and
\begin{align*}
&(-\Delta+V_0-\L_0)\Phi_1=-\int\limits_{\square} K_1(\cdot,y)\Psi_0(y_{n+1})\di y \quad \text{in}\quad \square,
\\
&\cB\Phi_1=0\quad\text{on}\quad \p\square\cap\p\Pi,\qquad \frac{\p\Phi_1}{\p\nu}=0\quad\text{on}\quad\g.
\end{align*}
The constants $\L_2$ and $\eta$ read in this case
\begin{align*}
&\L_2=\int\limits_{\square\times\square} K_2(x,y) \Psi_0(x_{n+1})\Psi_0(y_{n+1})\di x\di y,
\\
&\eta=-(b+1)\L_2 + (1-b)\int\limits_{\square\times\square} K_1(x,y) (\Phi_1(y)-\Psi_1(y))\Psi_0(x_{n+1})\di x\di y.
\end{align*}
For Theorem~\ref{thm:localization_C2}, the kernel $K_2$ needs to satisfy furthermore the inequality
\begin{equation} \label{eq:positive-definite-K}
\int\limits_{\square\times\square} K_2(x,y)u(x)\overline{u(y)}\di x\di y\leqslant 0\quad\text{for all}\quad u\in L^2(\square).
\end{equation}

A classical case where the last condition is satisfied is the following:
If $K_2(x,y) = K_2(x-y)$ is translation invariant, continuous  and satisfies the normalization $K_2(0) = 1$.
Bochner's theorem tells us that \eqref{eq:positive-definite-K} holds if and only if there is a probability measure $\nu$ on $\RR^n$, such that
 \[
  K_2(x) = - \int \exp(i \xi \cdot x) \mathrm{d} \nu (\xi).
 \]
Other sufficient conditions for~\eqref{eq:positive-definite-K} can be found in \cite{BergCR-84}.

Assumption~\ref{A1} can be again ensured by sufficient smoothness of the kernels $K_i$, e.~g.~$K_i\in C^q(\overline{\square\times\square})$. In this case the term $\cL(\d)\Psi^\d$ in the eigenvalue equation for $\Psi^\d$ is holomorphic in $\d$ in the sense of the $C^q(\overline{\square})$-norm. By Schauder estimates this implies that $\Psi^\d\in C^{2+q}(\overline{\square})$ and $\Psi^\d$ is holomorphic in the sense of the norm in this space. This ensures that Assumption~\ref{A1} holds. Assumptions~\ref{B1+2} hold, since $K_i$ are Hilbert-Schmidt operators.

\subsection{Random second order differential operator}\label{SSe:DO}
It is possible to treat the case where $\cL_i$ are
general symmetric second order differential operators:
 \begin{equation}\label{3.1}
\cL_i=\sum\limits_{k,j=1}^{n+1} \frac{\p}{\p x_k} Q_{kj}^{(i)}\frac{\p}{\p x_j} + \iu \sum\limits_{j=1}^{n+1}\left( Q_j^{(i)}\frac{\p}{\p x_j} + \frac{\p}{\p x_j} Q_j^{(i)}\right) + Q_0^{(i)},
\end{equation}
where $Q_{kj}^{(i)}$, $Q_j^{(i)}$  are real-valued piecewise continuously differentiable functions, $Q_0^{(i)}$ is a real measurable bounded function. We assume that

\begin{equation}\label{3.7}
Q_{jk}^{(i)}=Q_{kj}^{(i)}\quad\text{in}\quad\square\quad\text{and}\quad Q_{kj}^{(i)}=Q_k^{(i)}=0\quad\text{on}\quad\p\square\setminus\p\Pi.
\end{equation}
The latter condition is sufficient for $\cL_i$ to be symmetric.

An integration by parts gives the formula
\begin{equation}\label{3.2}
\L_1=
- \int\limits_{\square}
  Q_{n+1\,n+1}^{(1)}(x)\big(\Psi_0(x_{n+1})\big)^2\di x +\int\limits_{\square} Q_0^{(1)}(x)\Psi_0^2(x_{n+1})\di x.
\end{equation}

If $\L_1<0$, we are in \eqref{C1}.

If $\L_1=0$, then
\begin{equation}\label{3.4}
\cL_1 \Psi_0=\sum\limits_{k=1}^{n+1} \frac{\p\ }{\p x_k} Q_{k\,n+1}^{(1)}\Psi_0 + \iu Q_{n+1}^{(1)}\Psi_0 + \iu \sum\limits_{j=1}^{n+1} \frac{\p\ }{\p x_j} (Q_j^{(1)}\Psi_0) + Q_0^{(1)}\Psi_0
\end{equation}
and this formula is to be substituted into the right hand side of the equations in (\ref{2.12}), (\ref{2.12b}). The constant $\L_2$ is given by the identity similar to (\ref{3.2}):
\begin{equation}\label{3.5}
\L_2=
- \int\limits_{\square}
  Q_{n+1\,n+1}^{(2)}(x)\big(\Psi_0(x_{n+1})\big)^2\di x +\int\limits_{\square} Q_0^{(2)}(x)\Psi_0^2(x_{n+1})\di x.
\end{equation}

The formula for $\eta$ is rewritten as
\begin{equation}
\begin{aligned}
\eta:=&-(b+1)\L_2
\\
&+ (1-b) \left(-\sum\limits_{k=1}^{n+1} \left(\frac{\p\ }{\p x_k}(\Phi_1-\Psi_1), Q_{k\,n+1}^{(1)}\Psi_0\right)_{L^2(\square)} + \iu (\Phi_1-\Psi_1,Q_{n+1}^{(1)}\Psi_0)_{L^2(\square)}\right.
\\
&\hphantom{+(1-b)\Bigg(.}\left.-\iu\sum\limits_{j=1}^{n+1} \left(\frac{\p\ }{\p x_j}(\Phi_1-\Psi_1), Q_j^{(1)}\Psi_0 \right)_{L^2(\square)}
+ (\Phi_1-\Psi_1,Q_0^{(1)}\Psi_0)_{L^2(\square)}
\right).
\end{aligned}\label{3.6}
\end{equation}

Assumption~\ref{A1} again follows from Schauder estimates as above.

Assumption~\ref{B1} can be checked easily. Indeed, by integration by parts we  get that
\begin{align*}
(\cL_i u,\vp u)_{L^2(\square)}=&- \sum\limits_{k,j=1}^{n+1} \left(
Q_{kj}^{(i)}\frac{\p u}{\p x_j}, \frac{\p \vp u}{\p x_i}\right)_{L^2(\square)}
+ \iu \sum\limits_{j=1}^{n+1}\left( Q_j^{(i)}\frac{\p u}{\p x_j},\vp u\right)_{L^2(\square)}
\\
&+\iu \sum\limits_{j=1}^{n+1} \left( Q_j^{(i)}u,\frac{\p\vp u}{\p x_j}\right)_{L^2(\square)} + (Q_0^{(i)}u,\vp u)_{L^2(\square)}.
\end{align*}
Substituting here the identities
\begin{equation*}
\frac{\p \vp u}{\p x_i}=\vp\frac{\p u}{\p x_i} + u \frac{\p \vp }{\p x_i},
\end{equation*}
after obvious estimates we are led to Assumption~\ref{B1}.

 The proof of Assumption~\ref{B2} is even simpler. It is clear that in the expression $\vp \cL_i u-\cL_i \vp u$ all second order derivatives of $u$ cancel out and hence  Assumption~\ref{B2} holds true.

\subsection{Random electro-magnetic field}
\label{ss:simple-magnetic}

Our next example is a random magnetic field, which is in fact a particular case of the previous example. The random operator is introduced as
\begin{gather*}
\Op^\e(\xi):=\big(\iu \nabla + A^\e(\xi)\big)^2+W^\e(\xi),
\\
A^\e(\xi):=\e\sum\limits_{k\in\G} \xi_k \cS(k) A \cS(-k),\quad W^\e(\xi):=\e\sum\limits_{k\in\G} \xi_k \cS(k) (W_1+\e\xi_k W_2) \cS(-k).
\end{gather*}
Here $A\colon \square \to \RR^{n+1}$, is a real-valued magnetic potential, $W_i\colon \square \to\RR$ are bounded measurable real-valued potentials. We suppose that $A$ is piecewise continuously differentiable and it vanishes on the lateral boundary of $\square$. The corresponding operators $\cL_i$ are given by the identities
\begin{equation*}
\cL_1=2\iu A\cdot \nabla + \iu \Div A+W_1,\quad \cL_2=|A|^2+W_2,\quad \cL_3=0.
\end{equation*}

The case $W_0=W_1=0$ was already considered in \cite[Sect. 3.3]{BorisovGV-16} and it was shown that in this case $\L_1=0$ and $\L_2>0$.
While this was stated in~\cite{BorisovGV-16} for $V_0 \equiv 0$, this also holds for non-trivial $V_0$.
An explicit calculation is provided in Appendix~\ref{appendix:V_0}.
In view of Remark~\ref{r:CaseII} \eqref{rem2.4}, this implies that we are neither in \eqref{C1} nor \eqref{C2}. For this reason random magnetic fields are a very instructive example to illustrate why  the
case $\L_1=0$ and $\L_2>0$ is harder to treat than the two cases that we do.
We devote Appendix \ref{ap:CaseIII} to discuss this example and the results of Section~6 of~\cite{HislopK-02}.

To fit our present framework we therefore assume that at least one of the functions $W_0$ and $W_1$ is non-zero.
It was shown in \cite[Sect. 3.3]{BorisovGV-16} that
\begin{equation*}
\int\limits_{\square} \Psi_0  \big(2A\cdot \nabla + \Div A\big)\Psi_0\di x=0
\end{equation*}
and therefore,
\begin{equation*}
\L_1=\int\limits_{\square} W_1\Psi_0^2\di x.
\end{equation*}
If $\L_1<0$, we are in \eqref{C1}.

If $\L_1=0$, then
\begin{equation*}
\cL_1\Psi_0=2\iu A_{n+1}\Psi_0' + \iu\Psi_0\Div A + W_1 \Psi_0,
\end{equation*}
and
\begin{equation*}
\L_2=\int\limits_{\square} (|A|^2+W_2)\Psi_0^2\di x + \int\limits_{\square} \Psi_1 (2\iu A_{n+1}\Psi_0' + \iu\Psi_0\Div A + W_1 \Psi_0)\di x.
\end{equation*}
The constant $\eta$ is given by the identity
\begin{equation*}
\eta=-(b+1) \L_2 + (1-b)\int\limits_{\square} \RE (\Phi_1-\Psi_1) W_1\Psi_0\di x.
\end{equation*}
The condition $\cL_2\leqslant 0$
holds if we have the inequality
\begin{equation*}
|A|^2+W_2<0.
\end{equation*}
Finally, Assumption~\ref{A1} is again guaranteed via the Schauder estimates, assuming sufficient smoothness of $A$ and $W_i$,
for instance $A\in C^{1+q}(\overline{\square})$, $W_i\in C^q(\overline{\square})$, $q\in(0,1)$.
Since the considered example is a particular case of a second order differential operator treated in the previous example, Assumptions~\ref{B1},~\ref{B2} hold as well.

\subsection{Random metric} One more particular case of the above described random second order differential operator is provided by a random metric.
The corresponding random operator is introduced as
\begin{equation*}
\Op^\e(\xi)=-\D+V_0 +  \sum\limits_{k\in\G} \cS(k) \sum\limits_{j,k=1}^{n+1} \frac{\p\ }{\p x_j}
(\e\xi_k Q_{jk}^{(1)}+\e^2\xi_k^2 Q_{jk}^{(2)})\frac{\p\ }{\p x_k}
\end{equation*}
and the associated operators $\cL_i$, $i=1,2$, are given by (\ref{3.1}) with $Q_j^{(i)}=Q_0^{(i)}=0$, $i=1,2$, $j=1,\ldots,n+1$. The functions $Q_{jk}^{(i)}$ are assumed to be piecewise continuously differentiable in $\overline{\square}$, real-valued and obeying condition (\ref{3.7}). The operator $\cL_3$ is supposed to be zero.

Formula (\ref{3.2}) for $\L_1$ takes the form
\begin{equation*}
\L_1=-\int\limits_{\square}
  Q_{n+1\,n+1}^{(1)}(x)\big(\Psi_0(x_{n+1})\big)^2\di x.
\end{equation*}
If $\L_1<0$, we are in \eqref{C1}.

If $\L_1=0$, we just need to appropriately adapt formulae (\ref{3.4}), (\ref{3.5}), (\ref{3.6}).
To identify whether we are in \eqref{C2} let us restrict here our attention to the particular case
\begin{equation*}
Q_{n+1\,n+1}^{(1)}=Q_{k\,n+1}^{(1)}=Q_{n+1\,k}=0.
\end{equation*}
Then $\L_1=0$ and moreover, by (\ref{3.4}), we also have $\cL_1\Psi_0=0$. This implies immediately $\Psi_1=\Phi_1=0$ and the formulae for $\L_2$ and $\eta$ become
\begin{equation*}
\L_2=- \int\limits_{\square}
  Q_{n+1\,n+1}^{(2)}(x)\big(\Psi_0(x_{n+1})\big)^2\di x,\quad \eta=-(b+1)\L_2.
\end{equation*}
In order to satisfy simultaneously the condition $\cL_2\leqslant0$, we impose the assumption that
\begin{equation*}
\sum\limits_{j,k=1}^{n+1}  Q_{jk}^{(2)}(x)z_j z_k\geqslant 0,\quad z_j\in\mathds{R},\quad\text{and}\quad Q_{n+1\,n+1}^{(2)}>0,\quad x\in\overline{\square}.
\end{equation*}
Then $\L_2<0$ and $\eta>0$ and supposing also that $Q_{jk}^{(i)}\in C^{1+q}(\overline{\square})$, $q\in(0,1)$, we satisfy all assumptions of Theorem~\ref{thm:localization_C2}.

\subsection{Random delta-potential}
Our next example is a random delta interaction.
The results of this paragraph have been presented in the announcement~\cite{BorisovTV-18}.

We introduce the random delta interaction as follows:
Let $M_0$ be a closed bounded $C^3$ manifold in $\square \subset \RR^n$ of codimension one. The outward normal vector to $M_0$ is denoted by $\nu_0$. The manifold $M_0$ is assumed to be separated
from the boundary $\p\square$ by a positive distance.
By $M_k$, $k\in\G$, we denote the translate of $M_0$ along $\G$:
\begin{equation*}
M_k:=\{x\in\Pi:\, x-k\in M_0\}.
\end{equation*}
We also let $M:=\bigcup\limits_{k\in\G} M_k$.

By $y=(y_1,\ldots,y_{n-1})$ we denote some local coordinates on $M_0$, while $\vr$ is the distance in $\square$ from a point to $M_0$ measured along $\nu_0$. Since $M_k$ are translations of $M_0$, the coordinates $(y,\vr)$ are well-defined in every $\square_k$, $k\in\G$, and hence, in the whole of $\Pi$.

By $b_0=b_0(y)$ we denote a real function on $M_0$ assuming that $b_0\in C^3(M_0)$. We extent $b_0$ periodically to the entire set $M_0$.

Our random operator is introduced as the negative Laplacian
\begin{equation*}
\tilde{\Op}^\e(\xi):=-\D\quad\text{in}\quad L^2(\Pi),
\end{equation*}
whose domain consists of the functions $u\in \H2(\Pi
\setminus M)$ satisfying the boundary conditions
\begin{equation*}
[u]_{M_k}=0,\quad \left[\frac{\p u}{\p\varrho}\right]_{M_k}=-\e b_0 \xi_k u\big|_{M_k},\quad [u]_{M_k}:=u\Big|_{\genfrac{}{}{0 pt}{}{\vr=+0}{y\in M_k}}-u\big|_{\genfrac{}{}{0 pt}{}{\vr=-0}{y\in M_k}}.
\end{equation*}
The introduced random operator $\tilde{\Op}^\e(\xi)$ cannot be represented as (\ref{2.2}) since the domain of $\tilde{\Op}^\e(\xi)$ is not a subspace in $\H2(\Pi)$.
However, here we apply the approach proposed in \cite[Sect. 8.5]{Borisov-07-MPAG}, see also \cite[Sect. 8.5]{Borisov-07-AHP}. This approach will allow us to transform our operator to another one obeying the assumptions of the present work. Let us describe this approach.

First we denote by $\mathcal{P}_1$ the mapping describing the change of the variables $x\mapsto (y,\vr)$: $(y,\vr)=\mathcal{P}_1(x)$. This mapping is well-defined in a small neighbourhood of each $M_k$, $k\in \G$, and the shape of this neighbourhood is independent of $k$. Then in these neighbourhoods we introduce one more mapping:
\begin{equation*}
\mathcal{P}_2(x,t):=\mathcal{P}_1^{-1}\left(y,\vr+\tfrac{1}{2}\vr|\vr|t b(\xi)\right),
\end{equation*}
where $t\in[-t_0,t_0]$, and $t_0$ is the constant used in the definition of the operator $\cL(t)$.

Let $\chi=\chi(z)$ be an infinitely differentiable cut-off function vanishing for $|z|>2$ and equalling one for $|z|<1$.
We define
\begin{equation*}
\mathcal{P}_3(x,t)=\left(1-\chi\left(\frac{\vr}{\d_0}\right)\right) x + \chi\left(\frac{\vr}{\d_0}\right)\mathcal{P}_2(x,t),
\end{equation*}
where $\d_0>0$ is a sufficiently small fixed number. In \cite[Sect. 8.5]{Borisov-07-MPAG}, the following properties of $\mathcal{P}_3$ were proved:
For sufficiently small $\d_0$, the mapping $\mathcal{P}_3$ is a $C^1$-diffeomorphism, maps $\overline{\square}$ onto itself  and it acts as the identity mapping outside some small neighbourhood of $M_0$. We define a similar mapping on $\RR^n$ as
\begin{equation*}
\mathcal{P}_4(x,\e\xi):=\mathcal{P}_3(x,\e \xi_k)\quad\text{on}\quad \square_k.
\end{equation*}
In view of the aforementioned properties of $\mathcal{P}_3$, the mapping $\mathcal{P}_4$ is a $C^1$-diffeomorphism, maps $\Pi$
onto itself, and it acts as the identity mapping outside a small neighbourhood of $M$.  It also follows from \cite[Sect. 8.5]{Borisov-07-MPAG} that the operator
\begin{equation*}
(\mathcal{U}u)(x):=\mathrm{p}^{-\frac{1}{2}}(x)u\big(\mathcal{P}_4^{-1}(x,\e\xi)\big)
\end{equation*}
is unitary in $L^2(\RR^n)$ and
\begin{equation}\label{3.13}
\Op^\e(\xi):=\mathcal{U}\tilde{\Op}^\e(\xi)\mathcal{U}^{-1}=-\mathrm{p}^{\frac{1}{2}} \Div_x \mathrm{p}^{-1} \mathrm{P}^\intercal \mathrm{P} \nabla_x \mathrm{p}^{\frac{1}{2}},
\end{equation}
where $\mathrm{P}$ is the Jacobian matrix formed by the derivatives $\frac{\p\mathcal{P}_4}{\p x_i}$, and $\mathrm{p}:=\det \mathrm{P}$ is the associated Jacobian.

The domain of the operator $\Op^\e(\xi)$ coincides with space (\ref{2.0}).
The matrix $\mathrm{P}$ and the function $\mathrm{p}$ does not have continuous derivatives on $M$,
however these derivatives are well-defined on $\Pi\setminus M$ and thus have limits on both the inner and outer side of $M$.
This is why the action of the  differential expression in the right hand side in (\ref{3.13}) should be treated as follows: this expression is applied to a function in the domain of $\Op^\e(\xi)$ and the result is calculated in the sense of usual derivatives in $\Pi\setminus M$; the values of a zero measure $M$ are neglected.

The operator $\Op^\e(\xi)$ satisfies (\ref{2.2}); the corresponding operator $\cL(t)$  acts as
\begin{equation}\label{3.14}
\cL(t):=-\mathrm{p}^{\frac{1}{2}} \Div_x \mathrm{p}^{-1} \mathrm{P}^\intercal \mathrm{P} \nabla_x \mathrm{p}^{\frac{1}{2}} + \D_x,
\end{equation}
where in the right hand side we let $\e:=1$, $\xi_k:=t$.
The operators $\cL_i$ can be obtained by expanding the right hand side of (\ref{3.14}) into the Taylor series as $t\to0$. The spectra of the operators $\Op^\e(\xi)$ and $\tilde{\Op}^\e(\xi)$ coincide and therefore, the stated localization of the spectrum for $\Op^\e(\xi)$
implies the same for that of $\tilde{\Op}^\e(\xi)$ provided we can satisfy the required assumptions.

Reproducing literally the calculations of Section~8.5 in \cite{Borisov-07-AHP}, one can check easily that
\begin{equation*}
\L_1=\int\limits_{M_0} b_0(\xi)\Psi_0^2(x)\di x.
\end{equation*}
If $\L_1<0$, we are in \eqref{C1}. Assumption~\ref{A1} then holds due to standard smoothness improving theorems.
Indeed, since the perturbation is localized on the manifold $M_0$, which is separated from $\g$, asymptotics (\ref{2.9}) holds true in the vicinity of $\g$ in $H^p$-norm for each $p\geqslant 1$. This implies immediately the asymptotics (\ref{2.12c}).

Condition~\ref{B1+2} here can be checked in the same way as this was done for a second order differential operator in Section~\ref{SSe:DO}.

\section{Perturbation theory and minimum of the spectrum} \label{s:minimum}
In this section we provide details on the Taylor approximations~\eqref{eq:Taylor} for $\L^\d$ and $\Psi^\d$, prove some preliminary perturbation estimates and prove Theorem~\ref{th2.1}.
\subsection{Taylor expansion of the ground state}
Let $\L_0$ be defined as the smallest eigenvalue of the operator
\begin{equation*}
-\frac{d^2}{dx_{n+1}^2}+V_0\quad \text{on}\quad (0,d)
\end{equation*}
subject to boundary condition (\ref{eq:boundary-condition-B}).

The function $\Psi_0$ is defined as follows:
Let $\Psi_0=\Psi_0(x_{n+1})$ be the unique positive eigenfunction corresponding to $\L_0$ with normalization chosen such that
\begin{equation*}
\|\Psi_0\|_{L^2(0,d)}=\frac{1}{\sqrt{|\square'|}}.
\end{equation*}
We extend $\Psi_0$ to $\square$ by $\Psi_0(x',x_{n+1})=\Psi_0(x_{n+1})$ and use the same symbol for this extension.
The resulting function then belongs to $\H2(\square)$ and is the unique, non-negative, normalized ground state of $\Op^0_\square$.
Furthermore, we note that the function $\Psi_0$ satisfies Neumann as well as periodic boundary conditions on $\g = \p \square \backslash \p \Pi$.

The first lemma describes some properties of $\L^\d$ and $\Psi^\d$ (lowest eigenvalue and eigenfunction of the operator $\Op^\d_\square$).

\begin{lemma}\label{lem:Taylor}
The eigenvalue $\L^\d$ is simple and twice continuously differentiable with respect to sufficiently small $\d$.
The associated eigenfunction $\Psi^\d$ can be normalized to obey $(\Psi^\d,\Psi_0)_{L^2(\square)}=1$ and under such a normalization it is twice differentiable with respect to $\d$ in the norm of $\H2(\square)$.
The first terms of the Taylor expansions for $\L^\d$ and $\Psi^\d$ are
\begin{align}\label{2.7}
&\L^\d=\L_0+\d \L_1 +\d^2 \L_2 +O(\d^3),
\\
\label{2.9}
&\Psi^\d(x)=\Psi_0(x)+\d\Psi_1(x)+\d^2\Psi_2(x)+O(\d^3),
\end{align}
where $\L_i$, $\Psi_i$, $i=1,2$, are uniquely determined by the conditions
\begin{align}
&\L_1:=(\cL_1\Psi_0,\Psi_0)_{L^2(\square)},\nonumber
\\
&\L_2:=(\cL_2\Psi_0,\Psi_0)_{L^2(\square)}  + (\Psi_1,\cL_1\Psi_0)_{L^2(\square)},\nonumber
\\
\label{2.12}
&(\Op^0_\square-\L_0)\Psi_1=-\cL_1\Psi_0 + \L_1\Psi_0, && (\Psi_1,\Psi_0)_{L^2(\square)}=0,
\\
&(\Op^0_\square-\L_0)\Psi_2=-\cL_1\Psi_1 -\cL_2\Psi_0+ \L_1\Psi_1+\L_2\Psi_0, && (\Psi_2,\Psi_0)_{L^2(\square)}=0. \nonumber
\end{align}
\end{lemma}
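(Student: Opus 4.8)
Lemma~\ref{lem:Taylor} is a statement in analytic perturbation theory of a simple isolated eigenvalue, so the plan is to run a Lyapunov--Schmidt (Feshbach--Schur) reduction of the eigenvalue problem for $\Op^\d_\square$ around the ground state of $\Op^0_\square$; alternatively one could invoke Kato's $C^k$-perturbation theory for isolated eigenvalues, but the reduction makes the formulas for $\L_i$ and $\Psi_i$ transparent. As preliminaries I would record the facts already assembled above: $\Op^0_\square$ has compact resolvent, $\L_0$ is its lowest eigenvalue, it is simple with the real, strictly positive normalized eigenfunction $\Psi_0$, and there is a positive spectral gap above $\L_0$. Write $P_0$ for the orthogonal projection onto $\CC\Psi_0$, $Q_0$ for its complement, and $R_0$ for the reduced resolvent, i.e.\ the inverse of $\Op^0_\square-\L_0$ on the range of $Q_0$, which is bounded from $L^2(\square)$ into $\H2(\square)$ by elliptic regularity. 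By the hypotheses collected in the Convention, for $|\d|$ small the perturbation $\cL(\d)=\d\cL_1+\d^2\cL_2+\d^3\cL_3(\d)$ is $\D$-bounded with relative bound $<1$, so $\Op^\d_\square$ is self-adjoint on the fixed domain $\Dom(\Op^0_\square)$; moreover $\d\mapsto\cL(\d)$ is $C^1$ as a map into the bounded operators $\H2(\square)\to L^2(\square)$ with $\|\cL(\d)\|=O(\d)$, and its non-polynomial part $\d^3\cL_3(\d)$, together with its derivative, is $O(\d^3)$ resp.\ $O(\d^2)$.

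Next I would set up the reduction. Imposing the normalisation $(\Psi^\d,\Psi_0)_{L^2(\square)}=1$ is equivalent to writing $\Psi^\d=\Psi_0+\phi$ with $\phi$ in $\H2(\square)\cap\operatorname{ran}Q_0$, and I set $E:=\L^\d-\L_0$. Projecting $(\Op^0_\square-\L_0+\cL(\d)-E)(\Psi_0+\phi)=0$ with $P_0$ and $Q_0$ yields the scalar identity $E=\big(\cL(\d)(\Psi_0+\phi),\Psi_0\big)_{L^2(\square)}$ and the equation $\phi=-R_0Q_0\cL(\d)(\Psi_0+\phi)+E\,R_0\phi$. Substituting the former into the latter gives a single fixed-point equation for $\phi$ on a small ball of $\H2(\square)\cap\operatorname{ran}Q_0$; since $R_0$ is bounded $L^2\to\H2$ and $\|\cL(\d)\|_{\H2\to L^2}=O(\d)$, the right-hand side is a contraction there for $|\d|$ small, so Banach's fixed-point theorem yields a unique small solution $\phi(\d)$, hence a unique eigenvalue $\L^\d$ near $\L_0$. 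Counting the total eigenvalue multiplicity in a fixed small neighbourhood of $\L_0$ --- which equals $1$ for small $\d$ by the same contraction estimate (or by upper semicontinuity of the spectrum) --- gives simplicity of $\L^\d$. Differentiating the fixed-point equation (implicit function theorem: $\mathrm{Id}$ minus the $\phi$-derivative of the right-hand side is invertible for small $\d$) transfers the regularity of $\d\mapsto\cL(\d)$ to $\d\mapsto(\L^\d,\Psi^\d)$; the $C^1$-plus-$O(\d^3)$ structure of $\cL(\d)$ suffices for the claimed twice-differentiability and, in particular, leaves the first- and second-order Taylor coefficients unaffected by $\cL_3$.

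To identify those coefficients I would insert $\phi(\d)=\d\Psi_1+\d^2\Psi_2+O(\d^3)$ and $E(\d)=\d\L_1+\d^2\L_2+O(\d^3)$ into the scalar and $Q_0$ equations and match powers of $\d$. From $E=\big(\cL(\d)(\Psi_0+\phi),\Psi_0\big)_{L^2(\square)}$ the $\d^1$- and $\d^2$-terms give $\L_1=(\cL_1\Psi_0,\Psi_0)_{L^2(\square)}$ and, using symmetry of $\cL_1$, $\L_2=(\cL_2\Psi_0,\Psi_0)_{L^2(\square)}+(\Psi_1,\cL_1\Psi_0)_{L^2(\square)}$. From $\phi=-R_0Q_0\cL(\d)(\Psi_0+\phi)+E\,R_0\phi$, applying $\Op^0_\square-\L_0$ and rewriting $Q_0 w=w-(w,\Psi_0)_{L^2(\square)}\Psi_0$, the $\d^1$-term gives $(\Op^0_\square-\L_0)\Psi_1=-\cL_1\Psi_0+\L_1\Psi_0$ and the $\d^2$-term gives $(\Op^0_\square-\L_0)\Psi_2=-\cL_1\Psi_1-\cL_2\Psi_0+\L_1\Psi_1+\L_2\Psi_0$, together with $(\Psi_1,\Psi_0)_{L^2(\square)}=(\Psi_2,\Psi_0)_{L^2(\square)}=0$ since $\phi\in\operatorname{ran}Q_0$ --- exactly the conditions in \eqref{2.12}. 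Unique solvability of these is guaranteed by the orthogonality of $\Psi_0$ to the respective right-hand sides and the invertibility of $\Op^0_\square-\L_0$ on $\operatorname{ran}Q_0$, and elliptic regularity of $R_0$ places $\Psi_1,\Psi_2$ in $\H2(\square)$, so that \eqref{2.9} holds in the $\H2(\square)$-norm and \eqref{2.7} in $\RR$.

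The main obstacle I anticipate is purely technical: reconciling the merely $C^1$ (non-analytic) dependence of $\cL_3(\d)$ on $\d$ with the assertion of $C^2$-regularity of $\L^\d$ and $\Psi^\d$, while carrying out all estimates through the $\H2(\square)\to L^2(\square)$ mapping properties of the $\cL_i$ rather than for bounded perturbations. The point to be checked carefully is that $\d^3\cL_3(\d)$, although only globally $C^1$, contributes a remainder of size $O(\d^3)$ with derivative of size $O(\d^2)$, and hence neither spoils the contraction/implicit-function argument near $\d=0$ nor affects the second-order Taylor polynomials; everything else --- the contraction estimates, the implicit function theorem, and the coefficient matching --- is routine once the functional-analytic setup above is in place.
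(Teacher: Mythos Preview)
Your proposal is correct and follows essentially the same approach as the paper: the paper's own proof consists of a single paragraph invoking ``regular perturbation theory'' for a simple isolated eigenvalue and referring to \cite{Borisov-17}, with no details given, while your Lyapunov--Schmidt/Feshbach--Schur reduction is precisely a concrete implementation of that standard argument, carried out in enough detail to read off the formulae for $\L_i$ and $\Psi_i$. Your explicit flagging of the regularity subtlety (that $\cL_3$ is only assumed $C^1$, so the $C^2$ assertion rests on the $\d^3$ prefactor pushing the non-smooth tail into the remainder) is in fact more careful than the paper, which does not comment on this point.
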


\begin{proof}
The proof of this lemma is based on regular perturbation theory, see also the proof of Lemma~2.1 in \cite{Borisov-17}.
Namely, as $\d\to0$, the lowest eigenvalue of $\Op_\square^\d$ converges to $\L_0$ and is simple.
The eigenfunction associated with $\L^\delta$  can be chosen so that it converges to $\Psi_0$ in $\H2(\square)$
and we normalize $\Psi^\d$ as it is stated in the formulation of the lemma.
Expansions (\ref{2.7}), (\ref{2.9}) and the stated formulae for their coefficients are implied directly by standard perturbation theory.
\end{proof}

\begin{lemma}\label{lm:minimal-value}
Assume that we are either in  \eqref{C1} or \eqref{C2}.
Let $t_0 > 0$ be sufficiently small.
Then, for all $\e \in (0, t_0]$,
the value $\d \in [b \e, \e]$ which minimizes $[b \e, \e] \ni \d \mapsto \L^\d$
is given by $\d=\e $.
\end{lemma}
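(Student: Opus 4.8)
The plan is to avoid comparing Taylor polynomials directly and instead to establish strict monotonicity of $\d\mapsto\L^\d$ on the relevant subintervals of $[-t_0,t_0]$, using that this map is twice continuously differentiable near $\d=0$ by Lemma~\ref{lem:Taylor}. From the expansion~\eqref{2.7} together with $C^2$-regularity one reads off that $\d\mapsto\L^\d$ has first derivative $\L_1$ and second derivative $2\L_2$ at $\d=0$; integrating the (continuous) second derivative then gives
\begin{equation*}
\frac{d}{d\d}\L^\d=\L_1+2\L_2\,\d+o(\d)\qquad\text{as }\d\to0,
\end{equation*}
uniformly on $[-t_0,t_0]$ after shrinking $t_0$ if necessary (which the Convention permits, the required smallness depending only on $\L_1$, $\L_2$ and the uniform bounds on $\cL_3(\cdot)$ and its derivative, never on $\e$).

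In \eqref{C1} the constant $\L_1<0$ is fixed. I would choose $t_0$ small enough that $|2\L_2\d+o(\d)|<|\L_1|/2$ for all $|\d|\leq t_0$; then $\frac{d}{d\d}\L^\d<0$ on all of $[-t_0,t_0]$, so $\d\mapsto\L^\d$ is strictly decreasing there. Since $[\e b,\e]\subset[-t_0,t_0]$ for every $\e\in(0,t_0]$, the minimum of $\L^\d$ over $[\e b,\e]$ is attained at, and only at, $\d=\e$, which is the claim.

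In \eqref{C2} one has $\L_1=0$ and, by Remark~\ref{r:CaseII}~\eqref{rem2.4}, condition~\eqref{eq:definition-eta} forces $\L_2<0$. The derivative formula becomes $\frac{d}{d\d}\L^\d=\d\,(2\L_2+o(1))$, so for $t_0$ small the bracket is negative throughout $[-t_0,t_0]$ and $\frac{d}{d\d}\L^\d$ carries the sign of $-\d$. Hence $\L^\d$ is strictly increasing on $[-t_0,0]$ and strictly decreasing on $[0,t_0]$. If $\e b\geq0$ we are on $[0,t_0]$ and $\d=\e$ is the unique minimizer on $[\e b,\e]$. If $\e b<0$ the minimum over $[\e b,\e]$ sits at one of the two endpoints, and to decide between them I would invoke~\eqref{2.7}:
\begin{equation*}
\L^\e-\L^{\e b}=(1-b^2)\,\L_2\,\e^2+O(\e^3).
\end{equation*}
Since $b=\min\supp\mu<1$ while $b>-1$ in \eqref{C2}, we have $1-b^2>0$; together with $\L_2<0$ this makes the leading term negative of exact order $\e^2$, hence dominant over the $O(\e^3)$ remainder once $t_0$ is small, so $\L^\e<\L^{\e b}$. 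In every case $\d=\e$ is the minimizer.

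The only genuinely delicate point — and the reason the argument is organized around $\frac{d}{d\d}\L^\d$ rather than around values of $\L^\d$ — is the behaviour near the right endpoint $\d=\e$: a crude value comparison would rely on the gain $\e^2-\d^2$, which collapses to the order of the cubic remainder in~\eqref{2.7} as $\d\uparrow\e$. Strict negativity of the derivative at $\d=\e$ resolves this uniformly in $\e$. The remaining bookkeeping, namely that the smallness threshold for $t_0$ depends neither on $\e$ nor on the scale $N$, is routine in view of the Convention.
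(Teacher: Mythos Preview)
Your proposal is correct and follows essentially the same approach as the paper: both arguments differentiate the expansion~\eqref{2.7} to obtain the sign of $\frac{d}{d\d}\L^\d$ (negative throughout in \eqref{C1}, equal to $-\sgn\d$ in \eqref{C2}), and then in \eqref{C2} compare the two endpoint values via~\eqref{2.7} using $b>-1$ and $\L_2<0$. Your write-up is merely more explicit in separating the subcases $\e b\geq 0$ and $\e b<0$ and in justifying the derivative formula from the $C^2$-regularity asserted in Lemma~\ref{lem:Taylor}.
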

\begin{proof}
We differentiate formula (\ref{2.7}):
\begin{equation*}
\frac{d\L^\d}{d\d} = \L_1 + O(\d)
\end{equation*}
and we see that in \eqref{C1}  the sign of $\frac{d\L^\d}{d\d}$ coincides with that of $\L_1$. Hence, for $\L_1>0$, the minimum of $\L^\d$ is attained at $\d=\e b$, while for $\L_1<0$, it is attained at $\d=\e$.

In \eqref{C2}  we have
\begin{equation*}
\frac{d\L^\d}{d\d}=2\L_2\d + O(\d^2)
\end{equation*}
Recall from Remark~\ref{rem2.2} and \cite{Borisov-17} that in \eqref{C2} the constant $\L_2$ is strictly negative and therefore,
\begin{equation*}
\sgn \frac{d\L^\d}{d\d}=-\sgn \d.
\end{equation*}
Hence, the minimum is attained either at $\d=\e$ or at $\d=\e b$.
Since $b>-1$, due to the asymptotics (\ref{2.7}), we conclude that the minimum is attained at $\d=\e$ (decreasing possibly $t_0$ further).
\end{proof}

In the situation of operators on the whole space as discussed in Section~\ref{s:entire-space}, we have the following analogue of Lemma~\ref{lem:Taylor}.
The proof carries over verbatim.

\begin{lemma}\label{lm3.2.1}
The eigenvalue ${\L'}^\d$ is simple and twice continuously differentiable with respect to sufficiently small $\d$. The associated eigenfunction can ${\Psi'}^\d$ can be normalized as
\begin{equation*}
\frac{1}{\sqrt{|\square'|}}\int\limits_{\square'} {\Psi'}^\d\di x'=1
\end{equation*}
and under such a normalization it is twice continuously differentiable  with respect to $\d$ in the norm of $\H2(\square')$. The first terms of the Taylor expansions for ${\L'}^\d$ and ${\Psi'}^\d$ are
\begin{align*}
&{\L'}^\d=\d {\L'}_1 +\d^2 {\L'}_2 +O(\d^3),
\\
&\Psi^\d(x')=\frac{1}{\sqrt{|\square'|}} \left( \mathbf{1}+\d\Psi'_1(x')+\d^2\Psi'_2(x')\right)+O(\d^3),
\end{align*}
where $\Lambda_i$, $\Psi'_i$, $i=1,2$, are uniquely determined by the conditions
\begin{align*}
&\L_1:=\frac{1}{|\square'|}\int\limits_{\square'}\cL'_1\mathbf{1}\di x',
\\
&\L_2:=\frac{1}{|\square'|}\int\limits_{\square'} \cL_2\mathbf{1}\di x'  + \frac{1}{|\square'|}(\Psi'_1,\cL_1\mathbf{1})_{L^2(\square')}, 
\\
&{\Op'}^0_{\square'}\Psi'_1=
-{\cL'}_1\mathbf{1} + {\L'}_1\mathbf{1}, && \int\limits_{\square'}\Psi'_1\di x'=0,
\\
&{\Op'}^0_{\square'}\Psi'_2=-\cL'_1\Psi'_1 -\cL'_2\mathbf{1}+ \L'_1\Psi'_1+\L'_2\mathbf{1}, &&
\int\limits_{\square'}\Psi'_2\di x'=0. 
\end{align*}
\end{lemma}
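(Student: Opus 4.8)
The statement is the whole-space counterpart of Lemma~\ref{lem:Taylor}, and as already indicated the argument is a verbatim transcription of the proof of that lemma. The plan is simply to rerun the regular perturbation argument with $-\D + V_0$ on $\square$ (Dirichlet or Neumann on $\p\square\cap\p\Pi$, periodic on $\g$) replaced by ${\Op'}^0_{\square'} = -\D$ on $\square'$ with periodic boundary conditions on $\p\square'$, and with $\L_0$, $\Psi_0$ replaced by $0$ and $\mathbf{1}/\sqrt{|\square'|}$. First I would record the spectral picture of the unperturbed operator: ${\Op'}^0_{\square'}$ is self-adjoint, nonnegative, has compact resolvent, and its lowest eigenvalue is $0$, simple, with eigenspace $\CC\mathbf{1}$; its operator domain coincides with $\H2(\square')$ (subject to periodicity), so each $\cL'_i\colon\H2(\square')\to L^2(\square')$ is ${\Op'}^0_{\square'}$-bounded and hence $\cL'(\d)$ is ${\Op'}^0_{\square'}$-bounded with relative bound $O(\d)\to 0$ as $\d\to 0$. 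By Kato--Rellich, ${\Op'}^\d_{\square'} = {\Op'}^0_{\square'} + \cL'(\d)$ is then self-adjoint on $\Dom({\Op'}^0_{\square'})$ for all sufficiently small $\d$, and, since $\cL'_3(\d)$ and its $\d$-derivative are bounded, $\d\mapsto\cL'(\d)$ is $C^1$ in operator norm with the only non-polynomial term $\d^3\cL'_3(\d)$ being $O(\d^3)$. Regular perturbation theory (exactly as in the proof of Lemma~\ref{lem:Taylor} and \cite{Borisov-17}) then yields, for $\d$ near $0$, a simple eigenvalue ${\L'}^\d$ of ${\Op'}^\d_{\square'}$ close to $0$ and an associated eigenfunction ${\Psi'}^\d$, twice continuously differentiable in $\d$ (the eigenfunction in the norm of $\H2(\square')$), which we choose to converge to $\mathbf{1}/\sqrt{|\square'|}$ in $\H2(\square')$ as $\d\to 0$.

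Next I would fix the normalization. Since $({\Psi'}^\d,\mathbf{1}/\sqrt{|\square'|})_{L^2(\square')}\to 1$ as $\d\to 0$, for small $\d$ this pairing is nonzero, so we may divide ${\Psi'}^\d$ by it; this removes the remaining phase freedom and enforces $\frac{1}{\sqrt{|\square'|}}\int_{\square'}{\Psi'}^\d\di x' = 1$. Writing the corresponding Taylor expansion as ${\Psi'}^\d = \frac{1}{\sqrt{|\square'|}}\big(\mathbf{1} + \d\Psi'_1 + \d^2\Psi'_2\big) + O(\d^3)$, this normalization is exactly the requirement $\int_{\square'}\Psi'_i\di x' = 0$ for $i = 1,2$.

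Finally I would insert this ansatz, together with ${\L'}^\d = \d{\L'}_1 + \d^2{\L'}_2 + O(\d^3)$ and $\cL'(\d) = \d\cL'_1 + \d^2\cL'_2 + O(\d^3)$, into the eigenvalue equation $({\Op'}^0_{\square'} + \cL'(\d)){\Psi'}^\d = {\L'}^\d{\Psi'}^\d$ and compare coefficients of powers of $\d$. The $\d^0$-term vanishes identically since ${\Op'}^0_{\square'}\mathbf{1} = 0$. The $\d^1$-term gives ${\Op'}^0_{\square'}\Psi'_1 = -\cL'_1\mathbf{1} + {\L'}_1\mathbf{1}$; pairing with $\mathbf{1}$ in $L^2(\square')$ and using ${\Op'}^0_{\square'}\mathbf{1} = 0$ forces ${\L'}_1 = \frac{1}{|\square'|}\int_{\square'}\cL'_1\mathbf{1}\di x'$, which is precisely the solvability condition making the right-hand side orthogonal to $\ker{\Op'}^0_{\square'} = \CC\mathbf{1}$; since ${\Op'}^0_{\square'}$ is boundedly invertible on $\{\mathbf{1}\}^\perp$ (there is a spectral gap above $0$), the equation has a unique solution $\Psi'_1\perp\mathbf{1}$. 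The $\d^2$-term analogously gives ${\Op'}^0_{\square'}\Psi'_2 = -\cL'_1\Psi'_1 - \cL'_2\mathbf{1} + {\L'}_1\Psi'_1 + {\L'}_2\mathbf{1}$; pairing with $\mathbf{1}$, using the symmetry of $\cL'_1$ (so that $(\cL'_1\Psi'_1,\mathbf{1}) = (\Psi'_1,\cL'_1\mathbf{1})$) and $(\Psi'_1,\mathbf{1})_{L^2(\square')} = 0$, yields ${\L'}_2 = \frac{1}{|\square'|}\int_{\square'}\cL'_2\mathbf{1}\di x' + \frac{1}{|\square'|}(\Psi'_1,\cL'_1\mathbf{1})_{L^2(\square')}$, and again by invertibility on $\{\mathbf{1}\}^\perp$ a unique $\Psi'_2\perp\mathbf{1}$. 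This reproduces all the formulas asserted in the lemma.

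I do not expect a genuine obstacle here; the only step needing a word of care is the regularity assertion, and it is handled by the observation already made: the non-polynomial part $\d^3\cL'_3(\d)$ of $\cL'(\d)$ is $O(\d^3)$ in operator norm with bounded $\d$-derivative, so it influences neither the coefficients ${\L'}_1,{\L'}_2,\Psi'_1,\Psi'_2$ nor the twice continuous differentiability for small $\d$, while all other terms are polynomial in $\d$; the remaining analysis is word for word that of Lemma~\ref{lem:Taylor}.
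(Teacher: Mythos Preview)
Your proposal is correct and matches the paper's approach exactly: the paper does not give a separate proof but simply states that the proof of Lemma~\ref{lem:Taylor} carries over verbatim to this whole-space setting. Your write-up spells out precisely that transcription, with the same regular perturbation argument, normalization choice, and coefficient-matching computations.
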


\subsection{Mezincescu boundary condition} In this subsection we discuss the restrictions of the operator $\Op^\e(\xi)$ on large bounded subdomains of the layer $\Pi$ with special boundary conditions. Given $\a\in\G$, $N\in\NN$, we denote
\begin{align*}
&\Pi_{\a,N}:=\{x:\, x'=\a+\sum\limits_{j=1}^{n} a_j e_j,\ a_j\in(0,N),\ 0<x_{n+1}<d\},
\\
&\G_{\a,N}:=\bigg\{x'\in\G:\, x'=\a+\sum\limits_{j=1}^{n} a_j e_j,\ a_j=0,1,\ldots,N-1\bigg\}
\end{align*}
and we obtain the obvious identity
\begin{equation*}
\Pi_{\a,N}=\bigcup\limits_{k\in\G_{\a,N}} \square_k
\end{equation*}
up to a zero measure set. By $\Op_{\a,N}^\e(\xi)$ we denote the operator
\begin{equation*}
\Op_{\a,N}^\e(\xi):=-\D +V_0+ \cL_{\a,N}^\e(\xi),\quad \cL_{\a,N}^\e(\xi):=\sum\limits_{k\in\G_{\a,N}} \cS(k) \cL(\e\xi_k) \cS(-k)
\end{equation*}
on $\Pi_{\a,N}$ subject to boundary condition (\ref{eq:boundary-condition-B}) on $\p\Pi\cap\p\Pi_{\a,N}$ and to boundary condition
\begin{equation}\label{4.1}
\frac{\p u}{\p\nu}= \rho^{\epsilon}u\quad\text{on}\quad\g_{\a,N}:=\p\Pi_{\a,N}\setminus\p\Pi.
\end{equation}
In the context of random operators, this boundary condition was first used by Mezincescu \cite{Mezincescu-87}. This is why in what follows we refer to it as Mezincescu boundary condition.

\subsection{Proof of Theorem~\ref{th2.1}\label{ss-identify_minimum}} The larger part of Theorem~\ref{th2.1} is a particular case of Theorem~2.1 in \cite{BorisovHEV-18} and it only remains to prove identity (\ref{2.20}).

Identity (\ref{2.19}) and formula~(5.17) in \cite{BorisovHEV-18} with $\theta_0=0$ yield
\begin{equation}\label{4.3}
\min\Sigma_\e\leqslant \frac{(\Op_\square^{\epsilon}\Psi^{\epsilon},\Psi^{\epsilon})_{L^2(\square)}} {\|\Psi^{\epsilon}\|_{L^2(\square)}^2}=\L^{\epsilon}.
\end{equation}

To obtain the converse estimate, let us introduce the quadratic forms
\begin{align*}
\Dom\big(h_{\e,\xi}\big) \ni v &\mapsto h_{\e,\xi}(v)
 :=\|\nabla v\|_{L^2(\Pi)}^2 + (V_0 v,v)_{L^2(\Pi)} +  (\cL^\e(\xi)v,v)_{L^2(\Pi)}
\\
\Dom\big(h_{\e,\xi, \a, N}\big) \ni v &\mapsto h_{\e, \xi, \a, N}(v)
 :=\|\nabla v\|_{L^2(\Pi_{\a,N})}^2 + (V_0 v,v)_{L^2(\Pi_{\a,N})}
  \\
  &\hphantom{  \mapsto h_{\e, \xi, \a, N}(v)
 :=}
 +  (\cL^\e(\xi)v,v)_{L^2(\Pi_{\a,N})}-(\rho^{\epsilon}v,v)_{L^2(\g_{\a,N})}
 \end{align*}
corresponding to $\Op^\e(\xi)$ and $\Op_{\a,N}^\e(\xi)$, respectively.
To specify the corresponding domains let us denote by $\partial_D \Pi$ the part of  $\partial \Pi$
where  (\ref{eq:boundary-condition-B}) corresponds to Dirichlet boundary conditions. Set
\[
\Dom\big(h_{\e,\xi}\big):= \overline{\{f\in C^\infty(\Pi) \cap W^{1,2}(\Pi)\mid  \dist(\supp f , \partial_D\Pi) >0\}}
\]
 where the closure is taken w.~r.~t.~the norm of $W^{1,2}(\Pi)$.
 Furthermore for $\alpha\in \Gamma$ and $ n \in \NN$ let
 \[
 \Dom\big(h_{\e,\xi,\a, N}\big):= \{f\in W^{1,2}(\Pi_{\a,N})\mid  \text{there exists} \   g \in \Dom\big(h_{\e,\xi}\big) \text{ such that } f = g \1_{\Pi_{\a,N}} \}
 \]
By the variational characterisation of the infimum of the spectrum
\begin{align*}
&\inf\spec(\Op^\e(\xi))
=\inf\limits_{0\neq u\in\Dom(h_{\e,\xi}))} \frac{h_{\e,\xi}(u)}{\|u\|_{L^2(\Pi)}^2}
=\inf\limits_{0\neq u\in\Dom(h_{\e,\xi})}
\frac{\sum\limits_{\a\in N\G} h_{\e,\xi, \a, N}(u) + (\rho^{\epsilon}u,u)_{L^2(\g_{\a,N})}} {\sum\limits_{\a\in N\G}\|u\|_{L^2(\Pi_{\a,N})}^2}
\end{align*}
Observe that by cancellation at the interfaces between elementary cells of $N \Gamma$ we have
\begin{equation*}
\sum\limits_{\a\in N \G} (\rho^{\epsilon}u,u)_{L^2(\g_{\a,N})}=0,
\quad \text{ for } \quad u\in \Dom\big(h_{\e,\xi}\big) =\bigoplus\limits_{\a\in N\Gamma}  \Dom\big(h_{\e,\xi, \a, N}\big),
\end{equation*}
and for each $u_\a \in  \Dom\big(h_{\e,\xi, \a, N}\big) $
\[
h_{\e, \xi, \a, N}(u_\a)
\geq
\|u_\a\|_{L^2(\Pi_{\a,N})}^2
\inf_{0\neq v \in  \Dom\big(h_{\e,\xi, \a, N}\big)}   \frac{h_{\e, \xi, \a, N}(v)} {\|v\|_{L^2(\Pi_{\a,N})}^2}
=
\|u_\a\|_{L^2(\Pi_{\a,N})}^2   \L_{\a,N}^\e(\xi),
\]
where $\L_{\a,N}^\e(\xi)$ the lowest eigenvalues of the operator $\Op_{\a,N}^\e(\xi)$.
Hence, $\inf\spec(\Op^\e(\xi)$ is lower bounded by
\begin{equation*}
\inf\limits_{0\neq u\in\Dom(h_{\e,\xi})}
\frac{\sum\limits_{\a\in N\G} h_{\e,\xi, \a, N}(u)} {\sum\limits_{\a\in N\G}\|u\|_{L^2(\Pi_{\a,N})}^2}
\geqslant
\inf\limits_{0\neq u=(u_\a)_{\a\in N\Gamma}\in \bigoplus\limits_{\a\in N\Gamma} \Dom\big(h_{\e,\xi, \a, N}\big)} \quad
\frac{\sum\limits_{\a\in N\G} \|u_\a\|_{L^2(\Pi_{\a,N})}^2 \ \L_{\a,N}^\e(\xi)} {\sum\limits_{\a\in N\G}\|u_\a\|_{L^2(\Pi_{\a,N})}^2}.
\end{equation*}
We now specialize to $N=1$, observe that  $\L_{0,1}^{\e}(\xi_0)=\L^{\e \xi_0}$
and $\L^{\epsilon}\leq \min_{\xi_0\in [b, 1]}  \L_{0,1}^{\e}(\xi_0)$ by Lemma~\ref{lm:minimal-value},
hence conclude
\begin{equation}\label{4.2}
\inf\spec(\Op^\e(\xi))
\geq
\L^{\epsilon}
\end{equation}
Minimizing over the configuration,  we obtain
\begin{equation*}
\min\Sigma_\e\geqslant \L^{\epsilon}.
\end{equation*}
This inequality and \eqref{4.3} prove \eqref{2.20}.

\section{Initial length scale estimate} \label{s:initial}

In this section we formulate and prove a slight generalization of Theorem \ref{thm:lower_bound_ground_state}.
We follow the scheme proposed in \cite{BorisovV-11}, \cite{BorisovV-13}, \cite{BorisovGV-16}, \cite{Borisov-17} for proving similar eigenvalue bounds from below.
Thereafter we deduce an initial length scale estimate,
which is the induction anchor for the multi-scale induction proof of Anderson localization.

\begin{theorem}\label{th4.1}
Let Assumption \ref{A1} hold.
There exist constants $N_1\in \NN$ and $c_0>0$, depending exclusively on the operators $\cL(t), t\in[-T,T],$ and on $V_0$,
such that for all $\a\in\G$, $N \in \NN$ with $N \geq N_1$
and $\xi\in \Omega$ the inequalities hold
\begin{align}
&\L_{\a,N}^\e(\xi)-\L^{\epsilon}\geqslant \frac{\e |\L_1|}{4 N^n} \sum\limits_{k\in\G_{\a,N}} (1-\xi_k) \quad \text{for}\ \e<c_0 N^{-2}\ \text{in \eqref{C1}},\label{4.2a}
\\
&\L_{\a,N}^\e(\xi)-\L^{\epsilon}\geqslant \frac{\eta \e^2}{N^n} \sum\limits_{k\in\G_{\a,N}} (1-\xi_k) \quad\hphantom{11} \text{for}\ \e<c_0 N^{-4}\ \text{in \eqref{C2}}.\label{4.2b}
\end{align}
\end{theorem}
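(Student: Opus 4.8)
By translation invariance of the construction it suffices to treat $\a=0$; write $\Pi_N:=\Pi_{0,N}$ and $\G_N:=\G_{0,N}$. The first step is a ground-state substitution. Let $\Psi^\e_\per$ be the $\G$-periodic extension of $\Psi^\e$ to $\Pi_N$. Since $\Psi^\e$ satisfies periodic conditions on $\g$ and, by the very definition of $\rho^\e$, the Mezincescu condition $\p_\nu\Psi^\e=\rho^\e\Psi^\e$, the function $\Psi^\e_\per$ lies in $\H2(\Pi_N)$, is bounded above and away from zero (Lemma~\ref{lem:Taylor} and smallness of $t_0$), obeys (\ref{4.1}) on $\g_N$, and satisfies $\Op_N^\e(\mathbf 1)\Psi^\e_\per=\L^\e\Psi^\e_\per$, so that $\L_N^\e(\mathbf 1)=\L^\e$. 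Substituting $u=\Psi^\e_\per v$ and carrying out the ground-state transformation with respect to $\Op_N^\e(\mathbf 1)$ (which also absorbs the Mezincescu boundary term) turns the Rayleigh quotient of $\Op_N^\e(\xi)$, in the model case where the $\cL_i$ are multiplication operators, into
\begin{equation*}
\L_N^\e(\xi)-\L^\e=\inf_{v\ne 0}\ \frac{\displaystyle\int_{\Pi_N}|\Psi^\e_\per|^2|\nabla v|^2\di x+\sum_{k\in\G_N}\big(\cS(k)(\cL(\e\xi_k)-\cL(\e))\cS(-k)u,u\big)_{L^2(\square_k)}}{\|u\|_{L^2(\Pi_N)}^2};
\end{equation*}
for general $\cL_i$ the numerator carries in addition a commutator-type defect term which is $O(\e)$ per cell, vanishes whenever $v$ is constant on a cell, and is absorbed like the other error terms below. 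The infimum is attained at the ground state $\psi\in\H2(\Pi_N)$; summing local elliptic estimates over cells and using that $\cL_N^\e(\xi)$ is $\D$-bounded with bound $O(\e)$, one gets $\|\psi\|_{\H2(\Pi_N)}\le C\|\psi\|_{L^2(\Pi_N)}$ with $C$ independent of $N$.

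The second step is a cell-by-cell lower bound on the numerator. On $\square_k$ write $u=\bar v_k\,\cS(k)\Psi^\e+\tilde w_k$ with $(\tilde w_k,\cS(k)\Psi^\e)_{L^2(\square_k)}=0$; a Poincaré inequality on the cell gives $\|\tilde w_k\|_{L^2(\square_k)}^2\le C_1\|\nabla v\|_{L^2(\square_k)}^2\le C_2\int_{\square_k}|\Psi^\e_\per|^2|\nabla v|^2$. Expand $\cL(\e\xi_k)-\cL(\e)=\e(\xi_k-1)\cL_1+R_k$ with $\|R_k\|_{\H2(\square)\to L^2(\square)}\le C_3\e^2(1-\xi_k)$ (using $|\xi_k|\le1$ and the uniform bounds on $\cL_2$, $\cL_3$, $\p_t\cL_3$). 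The source of the main term is the diagonal contribution: as $\Psi^\e$ is an admissible trial function for $\Op^{\e\xi_k}_\square$,
\begin{equation*}
\big((\cL(\e\xi_k)-\cL(\e))\Psi^\e,\Psi^\e\big)_{L^2(\square)}\ge(\L^{\e\xi_k}-\L^\e)\|\Psi^\e\|_{L^2(\square)}^2,
\end{equation*}
and the Taylor expansion of $\L^\d$ (Lemma~\ref{lem:Taylor}), together with $\L_1<0$, gives $\L^{\e\xi_k}-\L^\e\ge\frac{\e|\L_1|}{2}(1-\xi_k)$ in \eqref{C1} once $t_0$ is small. The off-diagonal terms $2\RE\big(\overline{\bar v_k}\,((\cL(\e\xi_k)-\cL(\e))\Psi^\e,\tilde w_k)\big)$ and the $\tilde w_k$--$\tilde w_k$ term are bounded by $O(\e(1-\xi_k))$ times products of $|\bar v_k|$, $\|\tilde w_k\|_{L^2(\square_k)}$ and $\|\tilde w_k\|_{\H2(\square_k)}$ (the last controlled through the elliptic bound on $\psi$), and by Young's inequality they are absorbed, partly into the diagonal term and partly into $\int_{\square_k}|\Psi^\e_\per|^2|\nabla v|^2$.

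Summing over $k$ and discarding the remaining nonnegative gradient energy yields \eqref{4.2a} as long as the total kinetic energy $\int_{\Pi_N}|\Psi^\e_\per|^2|\nabla(\psi/\Psi^\e_\per)|^2$ is small relative to $\|\psi\|_{L^2(\Pi_N)}^2$. If it is not --- i.e.\ if $\psi$ deviates appreciably from a multiple of $\Psi^\e_\per$ --- one uses a weighted Neumann--Poincaré inequality on $\Pi_N$ whose constant is $\gtrsim N^{-2}$ (comparing $|\Psi^\e_\per|^2$ with a constant and with the Neumann Laplacian on a box of side $N$): then $\int_{\Pi_N}|\Psi^\e_\per|^2|\nabla v|^2\gtrsim N^{-2}\|\psi\|^2$, which dominates the whole perturbation, bounded by $C\e\|\psi\|^2$ via $\|\psi\|_{\H2(\Pi_N)}\le C\|\psi\|_{L^2(\Pi_N)}$. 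Since the claimed right-hand side of \eqref{4.2a} is at most $\tfrac{\e|\L_1|}{2}$, balancing $N^{-2}$ against $\e$ forces the restriction $\e<c_0N^{-2}$; all constants depend only on $\cL(t)$, $t\in[-T,T]$, and $V_0$, and $N_1$ only serves to make the Poincaré constants and the elliptic estimate uniform. This dichotomy is the origin of the scale condition.

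Finally \eqref{C2}, which is the hard case. Here $\L_1=0$, so the diagonal term only delivers an $O(\e^2(1-\xi_k))$ contribution; worse, because $\cL_1\Psi_0\ne0$ (it is merely orthogonal to $\Psi_0$, cf.\ Remark~\ref{r:CaseII}), the off-diagonal term contains a genuinely first-order-in-$\e$ piece $\e(\xi_k-1)(\cL_1\Psi^\e,\tilde w_k)$ that a naive Young inequality cannot absorb into an $O(\e^2)$ main term without consuming an uncontrolled share of the gradient energy. The remedy --- the technical heart of the argument, following \cite{Borisov-17} --- is to replace the cell-wise reference function $\cS(k)\Psi^\e$ by one corrected with the auxiliary solution $\Phi_1$ of (\ref{2.12b}), whose Neumann datum on $\g$ is precisely the first-order response of an interior cell of $\Pi_N$: this cancels the dangerous first-order term and leaves a quadratic form whose leading part is exactly the $\eta\,\e^2$-term of (\ref{eq:definition-eta}), the hypothesis $\cL_2\le0$ keeping it nonnegative. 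Carrying out this substitution, re-estimating the new error terms, and again combining the weighted Poincaré inequality with the elliptic bound on $\psi$ yields \eqref{4.2b}, with the more delicate bookkeeping degrading the admissible range to $\e<c_0N^{-4}$. I expect this \eqref{C2} correction step, making the $\Phi_1$-based cancellation close uniformly in $N$, to be the main obstacle; \eqref{C1} is comparatively routine once the ground-state substitution is set up.
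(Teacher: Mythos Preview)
Your approach is a direct variational one---ground-state substitution and a cell-by-cell decomposition of the minimizer---and is genuinely different from the paper's. The paper writes $\Op_{\a,N}^\e(\xi)=\Op_{\a,N}^\e(\mathds{1})+\hat\cL_{\a,N}^\e(\xi)$ and, following \cite{Borisov-17}, derives the exact identity
\[
\L_{\a,N}^\e(\xi)-\L^\e=\frac{\big((I+\hat\cL_{\a,N}^\e(\xi)\hat\cR_{\a,N}^\e)^{-1}\hat\cL_{\a,N}^\e(\xi)\Psi^\e,\ \Psi^\e\big)_{L^2(\Pi_{\a,N})}}{N^n\|\Psi^\e\|_{L^2(\square)}^2},
\]
$\hat\cR_{\a,N}^\e$ being the reduced resolvent of $\Op_{\a,N}^\e(\mathds{1})$ at $\L^\e$; one expands $(I+\hat\cL\hat\cR)^{-1}=I-\hat\cL\hat\cR(I+\hat\cL\hat\cR)^{-1}$ and bounds the two resulting pieces $S_1,S_2$. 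The spectral gap enters exactly once, through $\|\hat\cR_{\a,N}^\e\|\le CN^2$, and $\e N^2<c_0$ is precisely what makes $|S_2|\le\tfrac12 S_1$. No ground-state transform, no commutator bookkeeping for general $\cL_i$, no dichotomy.

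There is, however, a real gap in your Case~1 step. After absorbing the cross terms and \emph{discarding} the residual kinetic energy $K=\int|\Psi^\e_\per|^2|\nabla v|^2$, the numerator you retain is $\sum_k\tfrac{\e|\L_1|}{4}(1-\xi_k)|\bar v_k|^2\|\Psi^\e\|^2$, and to reach \eqref{4.2a} you need $\sum_k(1-\xi_k)|\bar v_k|^2\gtrsim N^{-n}\|\psi\|^2\sum_k(1-\xi_k)$. This does \emph{not} follow from ``$K$ small relative to $\|\psi\|^2$'': the global Poincar\'e inequality controls $\sum_k|\bar v_k-\bar v|^2$ only in $\ell^2$, with constant $\sim N^2$, so nothing prevents the minimizer from concentrating its mass on the cells with $\xi_k=1$ and annihilating your main term on the others. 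The repair is \emph{not} to discard $K$: expand $|\bar v_k|^2$ about the global mean $|\bar v|^2$, bound the cross piece via Cauchy--Schwarz against $(\sum_k|\bar v_k-\bar v|^2)^{1/2}\le(CN^2K)^{1/2}$, and then use Young to produce an error $O(\e N^2K)$, which is absorbed back into $K$ precisely when $\e N^2<c_0$. This collapses your dichotomy into a single estimate and is exactly where the paper's reduced-resolvent bound does its work. Your sketch of \eqref{C2}---correcting with $\Phi_1$ to kill the first-order cross term---matches the mechanism of \cite{Borisov-17}, to which the paper simply defers for that case; note however that $\cL_2\le0$ plays no role in Theorem~\ref{th4.1}.
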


To prove this theorem, we shall need some preliminary notations and lemmata.
By $\mathds{1}$ we denote the constant sequence $\mathds{1}=\{1\}_{k\in\G}$. Let $\hat{\L}_\e$ be the second eigenvalue of the operator $\Op_{\a,N}^\e(\mathds{1})$.
Since the operator $\cL_3(t)$ has uniformly bounded derivative with respect to $t$, it satisfies the estimate
\begin{equation}\label{2.1a}
\big\|\big(\cL_3(t_1)-\cL_3( t_2 )\big)u\big\|_{L^2(\square)}\leqslant C|t_1-t_2|\|u\|_{\H2(\square)}
\end{equation}
for all $t_1,t_2\in[-t_0,t_0]$ and $u\in \H2(\square)$, where $C$ is a constant independent of $t_1$, $t_2$, $u$.
Reproducing literally the proof of Lemma~5.1 in \cite{Borisov-17}, one proves the following lemma.

\begin{lemma}\label{lm4.1}
For sufficiently large $N$ and small $t_0>0$
there exist constants $C_1$,  $C_2$, $C_3$,
depending exclusively on the operators
 $\cL_1$, $\cL_2$, $\cL_3(t)$, $t\in[-T,T]$
  and on $V_0$ such that for all $\a \in \G$ and all $\e \in (0, t_0]$ we have
\begin{align}
& |\hat{\L}_\e-\L_0-C_1 N^{-2}|\leqslant C_2\e,\nonumber
\\
&
\L_{\a,N}^\e(\xi)\leqslant \L_0+C_3\e\quad\text{in \eqref{C1}},\qquad
\L_{\a,N}^\e(\xi)\leqslant \L_0+C_3\e^2\quad\text{in \eqref{C2}}.
\label{4.7}
\end{align}
\end{lemma}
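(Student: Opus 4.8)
The plan is to imitate the proof of Lemma~5.1 in \cite{Borisov-17}, adapting it to the present setting where the single-site operator has the three-term structure $\cL(t) = t\cL_1 + t^2\cL_2 + t^3\cL_3(t)$ and the unperturbed operator carries the potential $V_0$. I would organise the argument around two separate tasks: an upper bound for $\L_{\a,N}^\e(\xi)$ via a test function, and two-sided control of the second eigenvalue $\hat\L_\e$ of $\Op_{\a,N}^\e(\mathds{1})$.

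For the estimate \eqref{4.7} on $\L_{\a,N}^\e(\xi)$, I would use the variational principle with a suitable trial function. The natural candidate is (a cut-off of) $\Psi^{\e}$, the ground state of the single-cell operator $\Op^\e_\square$, extended periodically across the cells of $\Pi_{\a,N}$; because $\Psi^\e$ obeys periodic conditions on $\gamma$ and the Mezincescu condition $\partial_\nu u = \rho^\e u$ on $\g_{\a,N}$ is designed precisely so that no boundary contribution appears, the Rayleigh quotient of this trial function equals $\L^\e$ up to the mismatch caused by $\xi_k \neq 1$. Since $\|\cL(\e\xi_k) - \cL(\e)\|_{\H2(\square)\to L^2(\square)} = O(\e|\xi_k - 1|)$ in \eqref{C1} (here one uses that $\L_1$ appears at order $\e$) and $O(\e^2|\xi_k-1|)$ in \eqref{C2} (where $\L_1 = 0$, so the $t\cL_1$ term contributes at order $\e^2$ after the relevant cancellations), together with $|\L^\e - \L_0| = O(\e)$ or $O(\e^2)$ respectively by \eqref{2.7} and Lemma~\ref{lm:minimal-value}, one obtains $\L_{\a,N}^\e(\xi) \leq \L_0 + C_3\e$ (resp.\ $C_3\e^2$). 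The relative boundedness of $\cL^\e$ with small relative bound (guaranteed for $t_0$ small) is what makes all these perturbation estimates legitimate.

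For $\hat\L_\e$, I would proceed in two steps. First, at $\e = 0$ the operator $\Op_{\a,N}^0(\mathds{1}) = -\Delta + V_0$ on $\Pi_{\a,N}$ with Mezincescu (which at $\e=0$ reduces to Neumann, since $\rho^0 = 0$) boundary conditions separates variables: its eigenvalues are $\L_0 + (\text{eigenvalues of } -\Delta_{x'}$ on the $n$-dimensional box with Neumann conditions$)$, so the ground state is $\L_0$ and the second eigenvalue is $\L_0 + C_1 N^{-2}$ with $C_1$ the first nonzero Neumann eigenvalue of the unit box scaled appropriately — depending only on $\Gamma$ (and $V_0$ through $\L_0$). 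Second, I would invoke analytic/regular perturbation theory in $\e$: since $\cL^\e(\mathds{1})$ is relatively bounded with relative bound $O(\e)$, eigenvalues move by $O(\e)$, which gives $|\hat\L_\e - \L_0 - C_1 N^{-2}| \leq C_2 \e$. Some care is needed that the constants $C_1, C_2, C_3$ depend only on the listed data and not on $N$; this holds because the perturbation bounds are cell-wise and $V_0$ depends only on $x_{n+1}$.

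The main obstacle I anticipate is the uniformity in $N$ of the perturbation estimate for $\hat\L_\e$: one must ensure that the spectral gap argument separating the first two eigenvalues of $\Op_{\a,N}^\e(\mathds{1})$ does not degrade as $N \to \infty$ (the gap $C_1 N^{-2}$ shrinks, so one needs $\e$ small relative to $N^{-2}$, which is exactly the regime $\e < c_0 N^{-2}$, resp.\ $\e < c_0 N^{-4}$, appearing in Theorem~\ref{th4.1}). Handling this correctly — tracking that the correction to $\hat\L_\e$ is genuinely $O(\e)$ uniformly rather than $O(\e) \cdot (\text{something growing in } N)$ — is the delicate point, and is presumably why the statement is phrased "for sufficiently large $N$ and small $t_0$" and refers back to the detailed computation in \cite{Borisov-17} rather than reproducing it.
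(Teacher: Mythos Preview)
Your plan is correct and matches the paper's approach, which is simply to invoke the proof of Lemma~5.1 in \cite{Borisov-17} verbatim: variational upper bound via the periodically extended $\Psi^\e$ for \eqref{4.7}, and min-max perturbation of the separated-variables second Neumann eigenvalue $\L_0 + C_1 N^{-2}$ for the estimate on $\hat\L_\e$.

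One clarification regarding your final paragraph: the condition $\e < c_0 N^{-2}$ is \emph{not} needed for Lemma~\ref{lm4.1} itself. The bound $|\hat\L_\e - \L_0 - C_1 N^{-2}| \leq C_2\e$ follows from the min-max principle, which controls $|\lambda_k(A+B) - \lambda_k(A)|$ by the relative form bound of $B$ \emph{regardless of any spectral gap}. Since $\cL_{\a,N}^\e(\mathds{1})$ acts cell-wise and the boundary perturbation satisfies $\rho^\e = O(\e)$ (controllable via a scale-invariant trace inequality), both contribute relative form bounds $O(\e)$ uniformly in $N$, so $C_2$ is genuinely $N$-independent without any smallness assumption coupling $\e$ to $N$. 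The condition $\e < c_0 N^{-2}$ enters only in the proof of Theorem~\ref{th4.1}, where the circle $T_N$ of radius $\tfrac{C_1}{2N^2}$ must separate $\L^\e$ from $\hat\L_\e$; Lemma~\ref{lm4.1} is precisely what makes that separation verifiable once $\e \ll N^{-2}$.
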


\begin{proof}[Proof of Theorem~\ref{th4.1}]
We choose sufficiently large $N$ and by  $T_N$ we denote the circle in the complex plane of radius $\frac{C_1}{2N^2}$ centered at the origin, where the constant $C_1$ comes from Lemma~\ref{lm4.1}. %
Due to our assumption on $\varepsilon$, asymptotics (\ref{2.7}) and Lemma~\ref{lm4.1}, this circle contains no spectral points of the operator $\Op_{\a,N}^\e(\mathds{1})$ except  for $\L^{\epsilon}$ and
\begin{equation*}
 \dist\big(\l,\spec(\Op_{\a,N}^\e(\mathds{1})) \setminus\{\L^{\epsilon}\}\big) \geqslant \frac{C_1}{3 N^2}.
\end{equation*}
for all $\l\in T_N$.
We rewrite the operator $\Op_{\a,N}^\e(\xi)$ as
\begin{equation}\label{eq:Lhat-operator}
\begin{aligned}
& \Op_{\a,N}^\e(\xi)= \Op_{\a,N}^\e(\mathds{1})+\hat{\cL}_{\a,N}^\e(\xi),
\\
&\hat{\cL}_{\a,N}^\e(\xi):= \cL^\e(\xi)-\cL^\e(\mathds{1})= \sum\limits_{k\in\G_{\a,N}} \cS(k)\hat{\cL}(\e\xi_k)\cS(-k),
\\
&\hat{\cL}(\e \z):=\cL(\e \z)-\cL(\e )=\e(\z-1)(\cL_1+\e(\z+1)\cL_2) + \e^3 \big(\z^3\cL_3(\e \z)-\cL_3(\e )\big).
\end{aligned}
\end{equation}
We observe that due to (\ref{2.1a}), for arbitrary  $u\in \H2(\square)$ we have
\begin{equation}
\begin{aligned}
\|\e^3(\z^3\cL_3(\e \z)-\cL_3(\e ))u\|_{L^2(\square)}&\leqslant \e^3\|(\z^3-1)\cL_3(\e\z)u\|+\e^3     \|(\cL_3(\e \z)-\cL_3(\e ))u\|_{L^2(\square)}
\\
&\leqslant C\e^3|\z-1|\|u\|_{\H2(\square)}
\end{aligned}\label{4.7a}
\end{equation}
with a constant $C$ depending only on the norms of $\cL_3(t)$ and its $t$-derivative, for $t\in [-T,T]$
(in particular, independent of $u$, $\varepsilon$ and $\zeta$).

Proceeding as in \cite[Eqs. (5.19)--(5.26)]{Borisov-17}, one can check easily that $\L_{\a,N}^\e(\xi)$ solves the equation
\begin{align}\label{4.10}
&\L_{\a,N}^\e(\xi)-\L^{\epsilon}
=
\frac{\big(\cA_{\a,N}^\e(\xi)   \hat{\cL}_{\a,N}^\e(\xi)
\Psi^{\epsilon},\Psi^{\epsilon}\big)_{L^2(\Pi_{\a,N})}} {N^n\|\Psi^{\epsilon}\|_{L^2(\square)}^2},
\\
&\cA_{\a,N}^\e(\xi):=\big(I+\hat{\cL}_{\a,N}^\e(\xi) \hat{\cR}_{\a,N}^\e(\L_{\a,N}^\e(\xi))\big)^{-1}, \nonumber
\end{align}
where $\Psi^{\epsilon}$
stands for the periodic extension of the function $\Psi^\d$ in Lemma \ref{lem:Taylor} with $\d=\e$ and
$\hat{\cR}_{\a,N}^\e(\l)$ is the reduced resolvent of the operator $\Op_{\a,N}^\e(\mathds{1})$ for  $\l$ in the vicinity of $\L^{\epsilon}$, specifically, for $\l\in T_N$.
The operator $\hat{\cR}_{\a,N}^\e(\l)$ is bounded as a map from $L^2(\Pi_{\a,N})$ into $\H2(\Pi_{\a,N})$ is holomorphic in $\l\in T_N$ and it satisfies the estimate
\begin{equation}\label{4.11}
\|\hat{\cR}_{\a,N}^\e(\l)f\|_{L^2(\Pi_{\a,N})}\leqslant CN^2 \|f\|_{L^2(\Pi_{\a,N})}
\end{equation}
for all $f\in L^2(\Pi_{\a,N})$, where $C$ is a constant independent of $f$, $\e$, $\a$, $N$.

In \eqref{C2}, equation (\ref{4.10}) was analysed in \cite[Sect. 5]{Borisov-17} and inequality \eqref{4.2b} was proved there.
Hence, in the following we consider only \eqref{C1}.
The operator $\cA_{\a,N}^\e(\xi)$ can be represented as
\begin{equation*}
\cA_{\a,N}^\e(\xi) = I - \hat{\cL}_{\a,N}^\e(\xi) \hat{\cR}_{\a,N}^\e(\L_{\a,N}^\e(\xi)) \cA_{\a,N}^\e(\xi).
\end{equation*}
We substitute this representation into the right hand side of (\ref{4.10}) and obtain
\begin{align*}
&\L_{\a,N}^\e(\xi)-\L^{\epsilon}=S_1+S_2,\quad S_1:=\frac{ (\hat{\cL}_{\a,N}^\e(\xi)\Psi^{\epsilon},\Psi^{\epsilon}\big)_{L^2(\Pi_{\a,N})}} {N^n\|\Psi^{\epsilon}\|_{L^2(\square)}^2},
\\
&S_2:=-\frac{\big(\hat{\cL}_{\a,N}^\e(\xi) \hat{\cR}_{\a,N}^\e(\L_{\a,N}^\e(\xi))
\cA_{\a,N}^\e(\xi)  \hat{\cL}_{\a,N}^\e(\xi)\Psi^{\epsilon},\Psi^{\epsilon}\big)_{L^2(\Pi_{\a,N})}} {N^n\|\Psi^{\epsilon}\|_{L^2(\square)}^2}.
\end{align*}
Our next step is to estimate $S_1$ and $S_2$.

First we observe that according Lemma~\ref{lem:Taylor}, the function $\Psi^\d$ is twice differentiable in $\d$ in the $\H2(\square)$-norm and hence, the norm $\|\Psi^\e\|_{\H2(\square)}$ is bounded uniformly in $\e$.

We begin by estimating the term $S_1$. We substitute the asymptotics (\ref{2.9}) for $\Psi^{\epsilon}$ and
formula (\ref{eq:Lhat-operator}) into the definition of $S_1$ and employ estimate (\ref{4.7a}).
This leads us to the identity
\begin{equation*}
S_1=\frac{1}{N^n}\sum\limits_{k\in\G_{\a,N}} \e (\xi_k-1) \big(\L_1 + S_3(\e,N,\xi_k,\a)\big),
\end{equation*}
where $S_3$ is some function satisfying the estimate
\begin{equation*}
|S_3(\e,N,\xi_k,\a)|\leqslant C\e.
\end{equation*}
Here $C$ is a constant depending only on the norms of $\cL_1$, $\cL_2$, $\cL_3(t)$, $\partial_t\cL_3(t)$,  $t\in[-T,T]$,
in particular independent of $\e$, $N$, $\xi$, $\a$, $k$. Hence, since $\L_1<0$, for sufficiently small $\e$ and all $N$, $\xi$, $\a$, $\xi_k$ we have
\begin{equation*}
\L_1+S_3(\e,N,\xi_k,\a)\leqslant \frac{\L_1}{2}.
\end{equation*}
Thus, we obtain
\begin{equation}\label{4.14}
S_1\geqslant \frac{|\L_1|}{2N^n} \sum\limits_{k\in\G_{\a,N}} \e|\xi_K-1|.
\end{equation}

We estimate $S_2$ in a rather rough way. Namely, in view of the definition of $\cA_{\a,N}^\e$ and $\hat{L}_{\a,N}^\e$, the relative boundedness of the operators $\cL_i(t)$, and estimate (\ref{4.11}), we get that for each $u\in L^2(\Pi_{\a,N})$
\begin{equation*}
\|\cA_{\a,N}^\e(\xi)u\|_{L^2(\Pi_{\a,N})}\leqslant C\|u\|_{L^2(\Pi_{\a,N})}
\end{equation*}
with a constant $C$ independent of $\e$, $\a$, $N$, $\xi$, $u$. Thus, again by the definition of $\hat{\cL}_{\a,N}^\e(\xi)$, (\ref{eq:Lhat-operator}), (\ref{4.7a}), (\ref{4.7}), (\ref{4.11}), we get
\begin{align*}
\big|\big(\hat{\cL}_{\a,N}^\e(\xi) & \hat{\cR}_{\a,N}^\e(\L_{\a,N}^\e(\xi))
\cA_{\a,N}^\e(\xi)  \hat{\cL}_{\a,N}^\e(\xi)\Psi^{\epsilon},\Psi^{\epsilon}\big)_{L^2(\Pi_{\a,N})}
\big|
\\
=&\big|\big( \hat{\cR}_{\a,N}^\e(\L_{\a,N}^\e(\xi))
\cA_{\a,N}^\e(\xi)  \hat{\cL}_{\a,N}^\e(\xi)\Psi^{\epsilon},\hat{\cL}_{\a,N}^\e(\xi)\Psi^{\epsilon}\big)_{L^2(\Pi_{\a,N})}
\big|
\\
\leqslant & C N^2 \|\hat{\cL}_{\a,N}^\e(\xi)\Psi^{\epsilon}\|_{L^2(\Pi_{\a,N})}^2=
C N^2 \sum\limits_{k\in\G_{\a,N}} \|\hat{\cL}_{\a,N}^\e(\xi)\Psi^{\epsilon}\|_{L^2(\square_k)}^2
\\
\leqslant & C N^2 \e^2 \sum\limits_{k\in\G_{\a,N}}|\xi_k-1|^2
\leqslant 4 C N^2 \e^2 \sum\limits_{k\in\G_{\a,N}}|\xi_k-1|.
\end{align*}
Hence,
\begin{equation*}
|S_2|\leqslant \frac{C\e^2}{N^{n-2}} \sum\limits_{k\in\G_{\a,N}}|\xi_k-1|.
\end{equation*}
Combining this estimate with (\ref{4.14}), we arrive at (\ref{4.2a}). This completes the proof.
\end{proof}

Now we are in the position to derive the desired initial length scale estimate
combining the Combes--Thomas bound with an elementary probabilistic estimate.
For this purpose one employs Theorem~\ref{th4.1}, estimate (\ref{4.2}) and proceeds as in \cite{BorisovV-11} \cite{BorisovGV-16}, \cite{Borisov-17}, to obtain the next theorem.

\begin{theorem}
 \label{thm:ISE}
 Let $\b_1,\,\b_2\in\G_{\a,N}$, $m_1,\,m_2>0$ be such that $B_1:=\Pi_{\b_1,m_1}\subset\Pi_{\a,N}$, $B_2:=\Pi_{\b_2,m_2}\subset\Pi_{\a,N}$.
Let $\tau \in \NN$ satisfy $\tau \geq 5$ in~\eqref{C1} or $\tau \geq 17$ in~\eqref{C2}.
Let $N_1$ and $c_0$ be as defined in Theorem~\ref{th4.1}. Define for $N \geq N_1$  the intervals
 \begin{align*}
  J_N
  &:=
  \left[\frac{8}{\sqrt{|\L_1|\mathbb{E}(|\om_0|)}} \frac{1}{N^{\frac{1}{2}}}, \frac{c_0}{N^{\frac{2}{\tau}}}\right]
  \quad
  \text{in~\eqref{C1}}
  \\
  J_N
  &:=
  \left[
  \frac{\sqrt{2}}{\sqrt{\eta\mathbb{E}(|\om_0|)}} \frac{1}{N^{\frac{1}{4}}},
  \frac{c_0}{N^{\frac{4}{\tau}}}
  \right]
  \quad\hphantom{1^{\frac{2}{\tau}}}
  \text{in~\eqref{C2}}.
 \end{align*}
Then there is a constant $c_1$, depending on the measure $\mu$ only, $c_2>0$ independent of $\e$, $\a$, $N$, $\b_1$, $\b_2$, $m_1$, $m_2$ such that for all $N \geqslant \max\{N_1^\tau, K_1^\tau\}$ and for all $\e \in J_N$,
where
\begin{align*}
 K_1
 &:=
 \left(\frac{8}{c_0 \sqrt{|\L_1| \mathbb{E}(|\om_0|)}}\right)^{\frac{2}{\tau-4}}
 \quad
 \text{in~\eqref{C1}}\\
 K_1
 &:=
 \left(\frac{1}{c_0}\sqrt{\frac{2}{\eta \, \mathbb{E}(|\om_0|)}}\right)^{\frac{4}{\tau-16}}
 \quad\hphantom{_1}
 \text{in~\eqref{C2}}
\end{align*}
we have the estimate
\begin{equation}\label{4.12a}
\begin{aligned}
 \mathbb{P}&\left(\forall \l\leqslant \L^{\epsilon}+\frac{1}{2\sqrt{N}}:\, \|\chi_{B_1}(\Op_{\a,N}^\e(\xi)-\l)^{-1}\chi_{B_2}\|\leqslant 2\sqrt{N}
 \exp\left(-c_2 \frac{\dist(B_1,B_2)}{\sqrt N}\right)\right)
 \\
 &\geqslant 1-N^{n\left(1-\frac{1}{\tau}\right)}   \exp\left(-c_1 N^{\frac{n}{\tau}}\right).
\end{aligned}
\end{equation}
This estimate is also valid if $\Op_{\a,N}^\e(\om)$ is equipped with Dirichlet boundary conditions on $\g_{\a,N}$.
\end{theorem}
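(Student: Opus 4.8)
The plan is to assemble three ingredients — the finite-volume eigenvalue bound of Theorem~\ref{th4.1}, a Combes--Thomas resolvent estimate, and an elementary large-deviation bound — and to glue them together by a form-domain bracketing argument performed not on the scale $N$ directly but on an auxiliary sub-scale $n_0 := \lfloor N^{1/\tau}\rfloor$. The point of this choice is that for $\e \in J_N$ one has $\e < c_0 n_0^{-2}$ in~\eqref{C1} (respectively $\e < c_0 n_0^{-4}$ in~\eqref{C2}), which is exactly the hypothesis under which Theorem~\ref{th4.1} is applicable on every box $\Pi_{\b,n_0}$, with $\b$ ranging over the $\asymp (N/n_0)^n \asymp N^{n(1-1/\tau)}$ translates tiling $\Pi_{\a,N}$. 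Since $n_0$ is a positive integer, the boundaries of these sub-boxes are unions of faces of $\G$-cells, so the Mezincescu boundary contributions on the interfaces between adjacent sub-boxes cancel in pairs, exactly as in the cell-decomposition computation in the proof of Theorem~\ref{th2.1}. Enlarging the form domain of $\Op_{\a,N}^\e(\xi)$ by dropping the matching of traces across these interfaces therefore gives, for every $\xi\in\Omega$,
\begin{equation*}
 \L_{\a,N}^\e(\xi)\ \geq\ \min_{\b}\,\L_{\b,n_0}^\e(\xi)\ \geq\ \L^\e + \min_{\b}\,\frac{\e|\L_1|}{4 n_0^{n}}\sum_{k\in\G_{\b,n_0}}(1-\xi_k),
\end{equation*}
where the second inequality is Theorem~\ref{th4.1} on each sub-box and, in~\eqref{C2}, the factor $\tfrac{\e|\L_1|}{4}$ is replaced by $\eta\e^2$.

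Next I would carry out the probabilistic step. The variables $(1-\om_k)_k$ are i.i.d., take values in $[0,1-b]$, and have strictly positive mean, so a Chernoff bound gives $\PP\big(\sum_{k\in\G_{\b,n_0}}(1-\om_k) < \theta n_0^{n}\big)\leq \exp(-c_1 n_0^{n})$ for a suitable $\theta>0$ and $c_1=c_1(\mu)>0$; the left endpoint of $J_N$ is defined precisely so that for $\e\geq\min J_N$ the threshold $\tfrac{4 n_0^{n}}{\e|\L_1|\sqrt N}$ (respectively $\tfrac{n_0^{n}}{\eta\e^2\sqrt N}$) coming from the previous display lies below $\theta n_0^{n}$. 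A union bound over the $\asymp N^{n(1-1/\tau)}$ sub-boxes then yields
\begin{equation*}
 \PP\Big(\L_{\a,N}^\e(\om)\geq \L^\e+\tfrac{1}{\sqrt N}\Big)\ \geq\ 1 - N^{n(1-1/\tau)}\exp\!\big(-c_1 N^{n/\tau}\big).
\end{equation*}
The requirement that $J_N$ be non-empty for large $N$ — that is, that its left endpoint, of order $N^{-1/2}$ (resp.\ $N^{-1/4}$), not exceed its right endpoint $c_0 N^{-2/\tau}$ (resp.\ $c_0 N^{-4/\tau}$) — is exactly what forces $\tau\geq 5$ (resp.\ $\tau\geq 17$) together with $N\geq K_1^\tau$, while the condition $n_0\geq N_1$ needed above forces $N\geq N_1^\tau$; pushing the constants through these inequalities produces the explicit $J_N$, $K_1$, $N_1$ of the statement.

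Finally I would run a Combes--Thomas estimate. On the event in the last display every $\l\leq \L^\e+\tfrac{1}{2\sqrt N}$ has distance $\geq\tfrac{1}{2\sqrt N}$ to $\spec(\Op_{\a,N}^\e(\om))$; since $\cL^\e(\om)$ is relatively $(-\Delta)$-bounded with relative bound $<1$ uniformly for $\e\leq t_0$ and each single-site operator is supported in one cell, so that $\Op_{\a,N}^\e(\om)$ is local up to finite range, the Combes--Thomas bound applies and gives $\|\chi_{B_1}(\Op_{\a,N}^\e(\om)-\l)^{-1}\chi_{B_2}\|\lesssim \sqrt N\exp\!\big(-c\,N^{-1/4}\dist(B_1,B_2)\big)$ uniformly in such $\l$, the prefactor being a constant times the inverse gap and the rate a constant times its square root. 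Because $N^{-1/4}\geq N^{-1/2}$, this is in turn $\leq 2\sqrt N\exp\!\big(-c_2\dist(B_1,B_2)/\sqrt N\big)$ for $N$ large, which is~\eqref{4.12a}; replacing the Mezincescu conditions on $\g_{\a,N}$ by Dirichlet ones only raises the spectrum, hence preserves the gap and the conclusion. The hard part of the whole argument is not any single estimate — Combes--Thomas and Chernoff bounds are routine, as in~\cite{BorisovV-11,BorisovGV-16,Borisov-17} — but the two-scale bookkeeping: $\e$ has to be small enough relative to $n_0$ for Theorem~\ref{th4.1} to be usable yet large enough relative to $N^{-1/2}$ (resp.\ $N^{-1/4}$) for the random upward shift of the ground-state energy to overcome the window $\tfrac{1}{2\sqrt N}$, and balancing these against the union-bound loss $N^{n(1-1/\tau)}$ versus the Chernoff gain $\exp(-c_1 N^{n/\tau})$ is what pins down the admissible $\tau$ and the interval $J_N$.
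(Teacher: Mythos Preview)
Your proposal is correct and follows essentially the same route the paper indicates: the paper itself gives no detailed argument here, deferring to \cite{BorisovV-11,BorisovGV-16,Borisov-17}, and the two-scale scheme you spell out---choose $n_0\asymp N^{1/\tau}$, bracket $\Pi_{\a,N}$ into Mezincescu sub-boxes $\Pi_{\b,n_0}$ (using the interface cancellation exactly as in the proof of Theorem~\ref{th2.1}), apply Theorem~\ref{th4.1} on each, run a Chernoff/union bound, and finish with Combes--Thomas---is precisely what those references do. Your bookkeeping identifying why $\tau\geq 5$ (resp.\ $\tau\geq 17$) and the endpoints of $J_N$ emerge is accurate, and the Dirichlet case follows as you say since Dirichlet dominates Mezincescu in form sense, so the spectral gap can only improve.
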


\begin{remark}
Theorem~\ref{thm:ISE} will be used to start the multi-scale analysis which proves spectral and dynamical localization for all disorders $\e \in J_N$ and all energies $E$ in some energy interval.
A careful analysis is will be required to ensure that the perturbed operator actually has any spectrum in the energy regions appearing in the probability in
\eqref{4.12a}.
Such a discussion will be performed in Section~\ref{sec:MSA}.

Furthermore, we emphasize that \emph{for fixed $N$}, Theorem~\ref{thm:ISE} only allows for choices of $\e$ in the interval $J_N$, i.e. there is a lower bound on the disorder $\e$.
In order to have a meaningful statement for \emph{all sufficiently small $\e$}, we shall have to choose initial scales $N_0:=N_0(\e)$, depending on $\e$.

Both issues will be discussed in Section~\ref{sec:MSA}.
\end{remark}

\section{Wegner estimate}
\label{s:Wegner}

In this section we prove a Wegner estimate, Theorem~\ref{thm:Wegner}, for the operator $\Op^\e(\omega)$ defined in \eqref{2.2}.
It holds for restrictions $\Op_{\a,N}^\e(\omega)$ of $\Op^\e(\omega)$ to $\Pi_{\a,N}$ with Mezincescu boundary conditions on $\g_{\a,n}=\partial \Pi_{\a,N} \setminus \partial \Pi$. The results of this section hold equally
if Mezincescu boundary conditions are replaced by Dirichlet boundary conditions on $\g_{\a,n}$.
For convenience we assume in this section that $\e > 0$ is fixed and
write for brevity $\Op(\omega)$, $\cL(\omega)$, $\Op_{\a,N}(\omega)$ and $\cL_{\a,N}(\omega)$
instead of  $\Op^{\e}(\omega)$, $\cL^\e(\omega)$, $\Op_{\a,N}^\e(\omega)$ and $\cL_{\a,N}^\e(\omega)$, respectively.

Note that in the case of Mezincescu boundary conditions on $\g_{\a,n}$, we have
\[
 \Lambda_0
 =
 \inf \sigma(\Op)
 =
 \inf \sigma(\Op_{\a,N})
\]
for all $\alpha \in \Gamma$ and $N \in \NN$, cf. for instance~\cite[Section 2]{KirschV-10}, and for Dirichlet boundary conditions
\[
 \Lambda_0
 \leq
 \inf \sigma(\Op_{\a,N}).
\]
Motivated by \cite{Klopp-95a}, we define a vector field $\vecfield$ acting on the probability space $\Om = \times_{k\in\G} [-1,1]$ via
\begin{equation}
 \label{eq:definition_vector_field}
 \vecfield
 :=
 \sum_{k \in \Gamma} \omega_k \frac{\partial}{\partial \omega_k}.
\end{equation}
Our Wegner estimate reads as follows, where we refer to Section~\ref{ss-identify_minimum} for the notation on quadratic forms:
\begin{theorem}
 \label{thm:Wegner}
 Let $E_0 < \Lambda_0$ and assume that there is $C_b > 0$ such that for all
$\omega \in \Om$, $\a \in \Gamma$, $N \in \NN$, and all $\phi \in \Dom\big(h_{\e,\xi,\a, N}\big)$ we have
 \begin{align}
 \label{eq:relatively_form_bounded_s}
  \left\langle \phi, \left( \vecfield \cL_{\a,N}(\omega) - \cL_{\a,N}(\omega) \right) \phi \right\rangle
  &\leq
  \frac{1}{4}
  \left\langle \phi, (\Op_{\alpha,N} - E_0  ) \phi \right\rangle,
  \quad
  \text{and}
  \\
    \label{eq:relatively_form_bounded_Weyl}
    \lvert \left\langle \phi, \cL_{\a, N}(\omega) \phi \right\rangle \rvert
    &\leq
    \frac{1}{4}
    \left\langle \phi, \Op_{\a, N} \phi \right\rangle + C_b \lVert \phi \rVert_{L^2(\Pi_{\a,N})}^2.
 \end{align}
 Then there is $C_{\text{Weyl}}\in(0,\infty)$, depending only the dimension,  such that for all $E \in (- \infty, E_0]$, all $\kappa \leq (\Lambda_0 - E_0 ) / 4$, all $\alpha \in \Gamma$, and all $N \in \NN$, we have
 \begin{equation}
 \label{eq:Wegner}
\begin{aligned}
  \PP ( \dist(\sigma(\Op_{\a,N}(\omega)), E) \leq & \kappa )
  \\
  \leq &
  24 C_{\mathrm{Weyl}}(n) \frac{\Vert h_0\Vert_{W^{1,1}(\RR)}}{ \Lambda_0 - E_0}
 \left[1 + (4C_b+ \Vert V_0\Vert_\infty)^{\frac{n+1}{2}} |\square| N^n \right]
 \cdot \kappa N^{n}.
\end{aligned}
 \end{equation}
This holds whether $\Op_{\alpha,N}(\omega) $ is equipped with
Mezincescu or Dirichlet boundary conditions on $\g_{\a,N}$.
\end{theorem}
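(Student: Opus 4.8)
The argument is the vector field method of Klopp and Hislop--Klopp \cite{Klopp-95a,HislopK-02}. By Chebyshev's inequality, $\PP(\dist(\sigma(\Op_{\a,N}(\omega)),E)\le\kappa)\le\EE[\Tr\chi_{I_\kappa}(\Op_{\a,N}(\omega))]$ with $I_\kappa:=[E-\kappa,E+\kappa]$; since $E\le E_0$ and $\kappa\le(\Lambda_0-E_0)/4$ we have $E+2\kappa\le E_0+\tfrac12(\Lambda_0-E_0)<\Lambda_0$, so $I_\kappa$ together with a $\kappa$-neighbourhood lies strictly below $\Lambda_0$. The core is a quantitative monotonicity statement. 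Using \eqref{eq:relatively_form_bounded_s} together with $\langle\phi,\cL_{\a,N}(\omega)\phi\rangle=\langle\phi,\Op_{\a,N}(\omega)\phi\rangle-\langle\phi,\Op_{\a,N}\phi\rangle\le\langle\phi,(\Op_{\a,N}(\omega)-\Lambda_0)\phi\rangle$ (which uses $\inf\sigma(\Op_{\a,N})\ge\Lambda_0$, cf.\ the remarks preceding the theorem; in the Mezincescu case an order-$\e$ boundary contribution is absorbed for $t_0$ small), one obtains for all $\phi$ in the form domain
\[
\langle\phi,\vecfield\cL_{\a,N}(\omega)\phi\rangle\le\big\langle\phi,\big(\Op_{\a,N}(\omega)-\tfrac34\Lambda_0-\tfrac14E_0\big)\phi\big\rangle.
\]
Restricting $\phi$ to the range of $P:=\chi_{(-\infty,E+2\kappa]}(\Op_{\a,N}(\omega))$ and using $E+2\kappa\le E_0+\tfrac12(\Lambda_0-E_0)$, this becomes the operator inequality $P\,\vecfield\cL_{\a,N}(\omega)\,P\le-\theta P$ with $\theta:=\tfrac14(\Lambda_0-E_0)>0$: the eigenvalues in the window rise at speed at least $\theta$ under the scaling flow $\omega\mapsto e^{-s}\omega$ generated by $-\vecfield$.

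Next I pass to a smoothed indicator. Fix $\Phi\in C_c^\infty(\RR)$ with $0\le\Phi\le1$, $\Phi\equiv1$ on $I_\kappa$, $\supp\Phi\subseteq[E-2\kappa,E+2\kappa]$, and set $\Psi(x):=\int_x^\infty\Phi(t)\,dt$, so $0\le\Psi\le4\kappa$, $\Psi\equiv0$ on $[E+2\kappa,\infty)$ and $\Psi'=-\Phi$. Since $\Op_{\a,N}(\omega)$ has compact resolvent and $\omega\mapsto\cL_{\a,N}(\omega)$ is continuously differentiable with relatively bounded derivative, $h(\omega):=\Tr\Psi(\Op_{\a,N}(\omega))$ is finite and $C^1$ on $\Om$, and by the standard first-order perturbation formula $\vecfield h(\omega)=\Tr[\Psi'(\Op_{\a,N}(\omega))\,\vecfield\cL_{\a,N}(\omega)]=-\Tr[\Phi(\Op_{\a,N}(\omega))^{1/2}\,\vecfield\cL_{\a,N}(\omega)\,\Phi(\Op_{\a,N}(\omega))^{1/2}]$. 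As $\supp\Phi\subseteq(-\infty,E+2\kappa]$ forces $\Phi(\Op_{\a,N}(\omega))^{1/2}=P\,\Phi(\Op_{\a,N}(\omega))^{1/2}$, the monotonicity gives $\Phi(\Op_{\a,N}(\omega))^{1/2}\,\vecfield\cL_{\a,N}(\omega)\,\Phi(\Op_{\a,N}(\omega))^{1/2}\le-\theta\,\Phi(\Op_{\a,N}(\omega))$, whence pointwise in $\omega$
\[
\Tr\chi_{I_\kappa}(\Op_{\a,N}(\omega))\le\Tr\Phi(\Op_{\a,N}(\omega))\le\tfrac1\theta\,\vecfield h(\omega).
\]
Moreover $\Psi\le4\kappa\,\chi_{(-\infty,E+2\kappa]}$ gives the uniform bound $0\le h(\omega)\le4\kappa\,n_{\a,N}(\omega)$ with $n_{\a,N}(\omega):=\Tr\chi_{(-\infty,E+2\kappa]}(\Op_{\a,N}(\omega))$, and \eqref{eq:relatively_form_bounded_Weyl} yields a form lower bound $\Op_{\a,N}(\omega)\ge c\,(-\Delta^{\mathrm{Neu}}_{\Pi_{\a,N}})-C$ with $c,C$ controlled by $C_b$ and $\|V_0\|_\infty$; Weyl's law for the Neumann Laplacian on $\Pi_{\a,N}$ (of volume $|\square|N^n$) then gives, uniformly in $\omega,\a,N$,
\[
n_{\a,N}(\omega)\le C_{\mathrm{Weyl}}(n)\big[1+(4C_b+\|V_0\|_\infty)^{(n+1)/2}|\square|N^n\big]=:W.
\]

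It remains to integrate the pointwise inequality over $\Om$ and transfer $\vecfield=\sum_{k\in\G_{\a,N}}\omega_k\partial_{\omega_k}$ onto the density $\prod_kh_0(\omega_k)$. Because $h_0\in W^{1,1}(\RR)$ is supported in $[-1,1]$ it is continuous and vanishes at $\pm1$, so integration by parts in $\omega_k$ has no boundary term: $\int_{-1}^1\omega_k(\partial_{\omega_k}h)\,h_0(\omega_k)\,d\omega_k=-\int_{-1}^1h\,\big(h_0(\omega_k)+\omega_kh_0'(\omega_k)\big)\,d\omega_k$. Using $|h_0+\omega_kh_0'|\le h_0+|h_0'|$ and $0\le h\le4\kappa W$, each of the $N^n=|\G_{\a,N}|$ summands is at most $4\kappa W\|h_0\|_{W^{1,1}(\RR)}$, so $\EE[\vecfield h]\le4\kappa W N^n\|h_0\|_{W^{1,1}(\RR)}$ and therefore
\[
\EE\big[\Tr\chi_{I_\kappa}(\Op_{\a,N}(\omega))\big]\le\tfrac1\theta\,\EE[\vecfield h]\le\frac{16\,\|h_0\|_{W^{1,1}(\RR)}}{\Lambda_0-E_0}\,W\,\kappa\,N^n,
\]
which is \eqref{eq:Wegner} up to the numerical constant ($16$ versus $24$ is slack in the choice of $\Phi$). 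The Mezincescu and Dirichlet cases are treated identically: the boundary condition enters only through $\inf\sigma(\Op_{\a,N})\ge\Lambda_0$ in the monotonicity step and the Neumann lower bound used for the Weyl count, both available in either case.

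The step I expect to be the main obstacle is the quantitative monotonicity together with the bookkeeping in the last step: one must ensure the smoothed indicator $\Phi$ is supported far enough below $\Lambda_0$ that the inequality $P\,\vecfield\cL_{\a,N}(\omega)\,P\le-\theta P$ applies on the spectral subspace that actually carries the eigenvalues near $E$ — which is exactly why the hypothesis imposes $E\le E_0$ and $\kappa\le(\Lambda_0-E_0)/4$ — and that the vector-field integration by parts produces precisely one power of $\kappa$ (from $\Psi\le4\kappa\chi$), one factor $1/(\Lambda_0-E_0)$ (from $\theta^{-1}$), the factor $\|h_0\|_{W^{1,1}(\RR)}$ (from moving $\vecfield$ to the density), one volume factor $N^n$ from the site sum, and the remaining volume factor from the Weyl count $W$.
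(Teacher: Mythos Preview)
Your proof is correct and represents a genuinely different (and somewhat more direct) implementation of the vector field method than the paper's. The paper follows Hislop--Klopp more closely: it conjugates by $(\Op_{\a,N}-E)^{-1/2}$ to form the Birman--Schwinger-type operator $X=(\Op_{\a,N}-E)^{-1/2}\cL_{\a,N}(\omega)(\Op_{\a,N}-E)^{-1/2}$, transfers the event $\dist(\sigma(\Op_{\a,N}(\omega)),E)\le\kappa$ to $\dist(\sigma(X),-1)\le\kappa/|E_0|$, and then uses the identity $\vecfield\vp(X+1-t)=\vp'(X+1-t)X+\vp'(X+1-t)(\vecfield X-X)$. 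The first summand acquires a sign because $X\approx-1$ on the support of $\vp'$, while the second is absorbed using assumption~\eqref{eq:relatively_form_bounded_s}; this absorption step produces the factor $4$ (and the $3\vartheta$ integration the factor $3$, giving the $24$). By contrast, you work directly with $\Op_{\a,N}(\omega)$: combining \eqref{eq:relatively_form_bounded_s} with $\cL_{\a,N}(\omega)=\Op_{\a,N}(\omega)-\Op_{\a,N}$ and $\Op_{\a,N}\ge\Lambda_0$ yields the clean monotonicity $P\,\vecfield\cL_{\a,N}(\omega)\,P\le-\tfrac14(\Lambda_0-E_0)P$ on the low-energy spectral subspace, after which the antiderivative trick $\Psi'=-\Phi$ converts this into a pointwise trace bound. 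Your route makes the dynamical interpretation (eigenvalues rising under $\omega\mapsto e^{-s}\omega$) transparent and avoids the sandwiching, at the price of a slightly more careful justification of the trace chain rule $\partial_{\omega_k}\Tr\Psi(\Op_{\a,N}(\omega))=\Tr[\Psi'(\Op_{\a,N}(\omega))\partial_{\omega_k}\cL_{\a,N}(\omega)]$. One minor point: the paper keeps the boundary terms $\tilde h(1),\tilde h(b)$ in the integration by parts rather than assuming they vanish; since the stated bound involves $\|h_0\|_{W^{1,1}(\RR)}$, your assumption $h_0(b)=h_0(1)=0$ is consistent, but the paper's bookkeeping would also cover $h_0\in W^{1,1}([b,1])$ with nonzero endpoint values.
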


We shall cast this theorem in a form suitable for the multi-scale analysis in Theorem~\ref{thm:Wegner_with_epsilon} below after introducing the parameter $\e$.
  We stress again that it is a priori not clear whether $\Op(\omega)$ has any spectrum below these $E_0$.
  Some analysis of the infimum of the spectrum of $\Op(\omega)$ will be required.
  This discussion will be done in Section~\ref{sec:MSA}.

\begin{proof} We adapt the strategy of \cite{HislopK-02} to our situation.
Possibly by adding a constant to the potential $V_0$, we may assume that $\Lambda_0 = 0$ in which case we have $\lvert \Lambda_0 - E_0 \rvert \leq \lvert E_0 \rvert$ and the assumption on $\kappa$ becomes $\kappa \leq \lvert E_0 \rvert/4$.
 We shall frequently use that $\Op_{\a,N} - E$ is a positive operator and that $E \leq E_0 < 0$.
 For $E < 0$ and $\omega \in \Omega$, we define
 \[
 X
  :=
  (\Op_{\a,N} - E)^{-\tfrac{1}{2}}
   \left(
    \cL_{\a,N}(\omega)
   \right)
   (\Op_{\a,N} - E)^{-\tfrac{1}{2}}
 \]
where we suppressed the dependence of $X$ on $\a,N,E$ and $\omega$.
Note that due to \eqref{eq:relatively_form_bounded_Weyl}
$X$ is a bounded operator,
as can be seen explicitly by adapting the estimates in \eqref{eq:L-relative-bound}.
 If $E \not \in \sigma(\Op_{\a,N}(\omega))$ we have
 \[
  (\Op_{\a,N}(\omega) - E)^{-1}
  =
  (\Op_{\a,N} - E)^{-\frac{1}{2}} \left( 1 + X \right)^{-1} (\Op_{\a,N} - E)^{-\frac{1}{2}}.
 \]
Using $\lVert (\Op_{\alpha,N} - E)^{-\frac{1}{2}} \rVert = \operatorname{dist} \{ E, \sigma (\Op_{\alpha,N}) \}^{-\frac{1}{2}} \leq \lvert E_0 \rvert^{-\frac{1}{2}}$, we find
 \begin{align*}
 \|   ( \Op_{\a,N}(\omega) - E )^{-1}\|_{L^2(\Pi_{\a,N})\to L^2(\Pi_{\a,N})}
  &\leq
  \frac{\|(1 + X)^{-1}\|_{L^2(\Pi_{\a,N})\to L^2(\Pi_{\a,N})}}{\lvert E_0 \rvert}.
 \end{align*}
This translates into the probability estimate
 \begin{align*}
  \PP ( \dist \{ \sigma(\Op_{\a,N}(\omega)), E \} \leq \kappa)
  = &
  \PP \left( \|(\Op_{\a,N}(\omega) - E)^{-1} \|_{L^2(\Pi_{\a,N})\to L^2(\Pi_{\a,N})} \geq \frac{1}{\kappa} \right)
  \\
  \leq &
  \PP \left( \|(1 + X)^{-1}\|_{L^2(\Pi_{\a,N})\to L^2(\Pi_{\a,N})} \geq \frac{\lvert E_0 \rvert}{\kappa}\right)
  \\
  \leq&
  \PP \left( \dist (\sigma (X), -1) \leq \frac{\kappa}{\lvert E_0 \rvert}\right)
  \leq
  \EE ( \Tr (\chi_{I_\vartheta}(X)))
 \end{align*}
 where $I_\vartheta = [- 1 - \vartheta, -1 + \vartheta ]$ with $\vartheta := \kappa / \lvert E_0 \rvert$.
 By assumption on $\kappa$, we have $- 1 + 2 \vartheta \leq - 1/2$ and consequently $[-1 - 2 \vartheta, -1 + 2 \vartheta] \subset (- \infty, - 1/2]$.

 For  a smooth, antitone function $\vp$ such that $\vp \equiv 1$ on $(- \infty, - \tfrac{\vartheta}{2}]$ and $\vp \equiv 0$ on $[\tfrac{\vartheta}{2}, \infty)$ we have
 \[
    \int_{- \tfrac{3\vartheta}{2}}^{\tfrac{3\vartheta}{2}} \frac{\drm}{\drm t} \vp(x + 1 - t) \drm t
    =\vp(x+1-3 \vartheta/2)-\vp(x+1+3 \vartheta/2)
    \geq \chi_{I_\vartheta}(x)
 \]
implying
 \begin{align*}
  \EE ( \Tr ( \chi_{I_\vartheta}(X)))
  &\leq
  \EE
    \left[
    \Tr \int_{- \tfrac{3\vartheta}{2}}^{\tfrac{3\vartheta}{2}} \frac{\drm}{\drm t} \vp(X + 1 - t) \drm t
    \right]
  =
    \EE
    \left[
    \Tr \int_{- \tfrac{3\vartheta}{2}}^{\tfrac{3\vartheta}{2}} - \vp'(X + 1 - t) \drm t
    \right].
 \end{align*}
 Bearing in mind that $- \vp' \geq 0$,
 \[
  \bigcup_{t \in [- \tfrac{3\vartheta}{2}, \tfrac{3\vartheta}{2}]} \supp \vp'(\cdot + 1 - t)
  \subset
  [- 1 - 2 \vartheta, - 1 + 2 \vartheta],
 \]
 and
 $$
 \frac{X}{(- 1 + 2 \vartheta)} \geq \operatorname{Id}
  \quad \text{on}\quad \operatorname{Ran}  \chi_{[- 1 - 2 \vartheta,  - 1 + 2 \vartheta]}(X),
  $$
  we find
 \begin{align*}
  - \vp'(X + 1 - t)
  &=
  - \vp'(X + 1 - t)  \chi_{[- 1 - 2 \vartheta,  - 1 + 2 \vartheta]}(X)\\
  &\leq
  - \vp'((X + 1 - t) \chi_{[- 1 - 2 \vartheta,  - 1 + 2 \vartheta]}(X) \frac{X}{- 1 + 2 \vartheta}
  \leq
  - \vp'(X + 1 - t) \frac{X}{- 1 + 2 \vartheta}
 \end{align*}
 for all $t \in [- \tfrac{3\vartheta}{2}, \tfrac{3\vartheta}{2}]$.
This yields
\begin{equation}
\label{eq:Wegner_before_vector_field}
 \EE
  \left[
   \Tr \int_{- \tfrac{3\vartheta}{2}}^{\tfrac{3\vartheta}{2}}
   - \vp'(X + 1 - t) \drm t
  \right]
 \leq
 \frac{1}{1 - 2 \vartheta}
 \EE
  \left[
   \Tr \int_{- \tfrac{3\vartheta}{2}}^{\tfrac{3\vartheta}{2}}
   \vp'(X + 1 - t) X \drm t
  \right].
\end{equation}
With the vector field $\vecfield$, defined in~\eqref{eq:definition_vector_field}, we calculate
\[
 \vecfield \vp(X + 1 - t)
 =
 \vp'(X + 1 - t) \vecfield X
 =
 \vp'(X + 1 - t) X
 +
 \vp'(X + 1 - t) (\vecfield X - X)
\]
whence
\[
 \vp'(X + 1 - t) X
 =
 \vecfield \vp(X + 1 - t)
 -
 \vp'(X + 1 - t)(\vecfield X - X).
\]
Plugging this into~\eqref{eq:Wegner_before_vector_field} and using $1 - 2 \vartheta \geq 1/2$ leads to
\begin{equation}
\label{eq:4}
\begin{aligned}
 \EE &\left[ \Tr \int_{-\tfrac{3\vartheta}{2}}^{\tfrac{3\vartheta}{2}} - \vp'(X + 1 - t) \drm t \right]
 \\
 &\leq
 2  \EE \left[ \Tr \int_{-\tfrac{3\vartheta}{2}}^{\tfrac{3\vartheta}{2}}
  \vecfield \vp(X + 1 - t) - \vp'(X + 1 - t) (\vecfield X - X )
 \drm t \right].
\end{aligned}
\end{equation}
Combining inequality~\eqref{eq:relatively_form_bounded_s}, positivity of $- \vp'(X + 1 - t)$, and the fact that $E \leq E_0$ we obtain
\begin{align*}
 - \Tr \vp'(X + 1 - t) (\vecfield X - X)
 &\leq
 - \frac{1}{4} \Tr \vp'(X + 1 - t) (\Op_{\alpha,N} - E)^{-\frac{1}{2}} (\Op_{\alpha,N} - E_0) (\Op_{\alpha,N} - E)^{-\frac{1}{2}}\\
 &\leq
 - \frac{1}{4} \Tr \vp'(X + 1 - t).
\end{align*}
This allows to absorb the second summand on the right hand side of inequality~\eqref{eq:4} on the left hand side.
We have established so far
\begin{align*}
 \PP( \dist \{ \sigma(\Op_{\a,N}(\omega)), E \} < \kappa)
  & \leq
 4 \sum_{j \in \Gamma_{\a,N}} \EE \left[ \Tr \int_{-\tfrac{3\vartheta}{2}}^{\tfrac{3\vartheta}{2}}
  \omega_j \frac{\partial}{\partial \omega_j} \vp(X + 1 - t)  \drm t  \right]
 \\
  & =
 4 \sum_{j \in \Gamma_{\a,N}} \int_{-\tfrac{3\vartheta}{2}}^{\tfrac{3\vartheta}{2}}
  \EE \Tr
  \left[ \omega_j \frac{\partial}{\partial \omega_j} \vp(X + 1 - t)  \drm t  \right]
\end{align*}
Interchanging integrals and the trace is justified by the fact that the trace is actually a finite sum, as will become explicit in \eqref{eq:finite-number}.
The sum over $j \in \Gamma_{\a,N}$ will lead to a $N^n$ factor in Ineq.~\eqref{eq:Wegner}.
Therefore, it suffices to estimate every summand separately with a $j$-independent bound, proportional to $\kappa \cdot (dN^n+1)$.

We use the product structure of the probability space to perform the integration with respect to the particular random variable $\omega_j$ first.
Since the density $\lambda\mapsto h_0(\lambda)$ of the random variables is absolutely continuous, so is $\lambda\mapsto \tilde h(\lambda) := \lambda h_0(\lambda)$.
Using integration by parts for absolutely continuous functions as well as the triangle inequality, we have
\begin{align*}
 &\big\lvert
 \int_b^1
  \tilde h(\omega_j) \frac{\partial}{\partial \omega_j}
  \Tr
    \left\{
     \vp(X + 1 - t)
    \right\}
    \drm \omega_j
    \big\rvert \\
  =
  &\big\lvert
    \tilde h(1)
    \Tr \vp(X^{1,j} + 1 - t)
    -
    \tilde h(b)
    \Tr \vp(X^{b,j} + 1 - t)
    -
    \int_b^1
    \tilde h'(\omega_j)
    \Tr \vp(X^{\omega_j,j} + 1 - t)
    \drm \omega_j
  \big\rvert \\
 \leq
 &
  \left(
   \lvert
   \tilde h(1)
   \rvert
  +
   \lvert
   \tilde h(b)
   \rvert
  +\|
   \tilde h'
   \|_{L_1(b,1)}
  \right)
  \max_{\lambda \in [b,1]} \Tr \vp(X^{\lambda,j} + 1 - t)
\\
 \leq &
2 \Vert h_0 \Vert_{W^{1,1}(\RR)} \,   \max_{\lambda \in [b,1]} \Tr \vp(X^{\lambda,j} + 1 - t)
\end{align*}
 where $X^{\lambda,j}$ denotes the operator $X$ with $\omega_j$ replaced by $\lambda$.
 It remains to bound
 \begin{align}
   \label{eq:Wegner_before_Weyl}
  \int_{- \tfrac{3\vartheta}{2}}^{\tfrac{3\vartheta}{2}}
  \int_{[b,1]^{\lvert \Gamma_{\a,N} \rvert - 1}}
      \max_{\lambda \in [b,1]}
      \Tr \vp(X^{\lambda,j} + 1 - t)
  \left(
  \prod_{i \in \Gamma_{\a,N}, i \neq j} h_0(\omega_i) \drm \omega_i
  \right)
  \drm t.
 \end{align}
 For this purpose, it suffices to establish
 \begin{equation} \label{eq:Weyl}
   \Tr \vp(X + 1 - t)
  \leq
 C_{\text{Weyl}}(n) \left[1 + (4C_b+ \Vert V_0\Vert_\infty)^{\frac{n+1}{2}} |\square| N^n \right]
 \end{equation}
because then,
the integration in~\eqref{eq:Wegner_before_Weyl} with respect to $t$ will yield the $\kappa /\lvert E_0 \rvert$ factor and the integration
with respect to the remaining random variables $\{ \omega_i \}_{i \neq j}$ will amount to one. This will bound in the end (\ref{eq:Wegner_before_Weyl}) by
 \begin{align*}
 C_{\text{Weyl}}(n) \left[1 + (4C_b+ \Vert V_0\Vert_\infty)^{\frac{n+1}{2}} |\square| N^n \right]
\ \frac{3\kappa}{|E_0|}
 \end{align*}
 To prove (\ref{eq:Weyl}), observe that  $\vp \leq 1$, $\supp \vp \subset (- \infty, \tfrac{\vartheta}{2}]$, and $(- \infty, \tfrac{\vartheta}{2} - 1 + t] \subset (- \infty, -\tfrac{1}{2}]$, consequently we have for every $\omega \in \Omega$ and every $t \in [- \tfrac{3\vartheta}{2}, \tfrac{3\vartheta}{2}]$
 \[
  \Tr \vp(X + 1 - t)
  \leq
  \#
  \left\{
  \text{Eigenvalues of $X$ in $(-\infty, -1/2]$}
  \right\}=: k.
 \]
 We claim that this number $k$ is bounded from above by the number of eigenvalues of $(\Op_{\a,N} - E)^{-1}$ in $[(4 C_b)^{-1}, \infty)$, where $C_b$ is the constant from Ineq.~\eqref{eq:relatively_form_bounded_Weyl}.
 In fact, if the $k$-th eigenvalue (counted from above) $\lambda_k^\leftarrow((\Op_{\a,N} - E)^{-1})$ of $(\Op_{\a,N} - E)^{-1}$ was smaller than $(4 C_b)^{-1}$, then for the $k$-th eigenvalue (counted from below) $\lambda_k^\rightarrow(X)$ of $X_{\a,N}$ we would have
 \begin{equation}
  \label{eq:L-relative-bound}
 \begin{aligned}
  \lambda_k^\rightarrow(X)
  &=
  \inf_{\substack{\mathfrak{V} \subset L^2(\Pi_{\a,N}) \\ \dim \mathfrak{V} =k} } \sup_{\substack{\phi \in \mathfrak{V} \\ \lVert \phi \rVert = 1}} \frac{\left\langle (\Op_{\a,N} - E)^{-\frac{1}{2}} \phi, \cL_{\a,N}(\omega) (\Op_{\a,N} - E)^{-\frac{1}{2}} \phi \right\rangle}{\lVert \phi \rVert^2}
  \\ &=
  \inf_{\substack{\mathfrak{W} \subset \Dom\big(h_{\e,\xi,\a, N}\big) \\ \dim \mathfrak{W}=k}} \sup_{\substack{\psi \in \mathfrak{W} \\ \lVert \psi \rVert =1}} \frac{\left\langle \psi, \cL_{\a,N}(\omega) \psi \right\rangle}{\lVert (\Op_{\a,N} - E)^{\frac{1}{2}} \psi \rVert^2}
  \\
 &\geq
  \inf_{\substack{\mathfrak{W} \subset \Dom\big(h_{\e,\xi,\a, N}\big) \\ \dim \mathfrak{W}=k}} \sup_{\substack{\psi \in \mathfrak{W} \\ \lVert \psi \rVert=1}}
    \left(
    - \frac{ \left\langle \psi, \Op_{\a,N} \psi \right\rangle}{4 \lVert (\Op_{\a,N} - E)^{\frac{1}{2}} \psi \rVert^2}
    - \frac{C_b \lVert \psi \rVert^2}{\lVert (\Op_{\a,N} - E)^{\frac{1}{2}} \psi \rVert^2}
    \right)\\
  &\geq
  - \frac{1}{4}
  -
  C_b
  \cdot
  \sup_{\substack{\mathfrak{V} \subset L^2(\Pi_{\a,N}) \\ \dim \mathfrak{V}=k}} \inf_{\substack{\phi \in \mathfrak{V} \\ \lVert \phi \rVert = 1}}
  \frac{\left\langle \phi, (\Op_{\a,N} - E)^{-1} \phi \right\rangle}{\lVert \phi \rVert^2}\\
  &=
  - \frac{1}{4} - C_b \cdot \lambda_k^\leftarrow( (\Op_{\a,N} - E)^{-1} )
  >
  - \frac{1}{2}.
 \end{aligned}
 \end{equation}

Since $\lVert (\Op_{\alpha,N} - E)^{-1} \rVert = \lvert E \rvert^{-1}$
 \begin{align*}
  \lvert \Tr \vp(X + 1 - t)\rvert
  &\leq
  \sharp
 \big\{\text{Eigenvalues of $(\Op_{\a,N} - E)^{-1}$ in $[(4 C_b)^{-1}, \infty)$} \big\}
   \\
  &=
  \sharp\big\{\text{Eigenvalues of $(\Op_{\a,N} - E)^{-1}$ in $[(4 C_b)^{-1}, \lvert E\rvert^{-1}]$} \big\}
 \end{align*}
By the spectral mapping theorem this is equal to
 \begin{align}\label{eq:finite-number}
  \sharp
  \big\{\text{Eigenvalues of $\Op_{\a,N}$ in $[0, 4 C_b + E]$}\big\}
  &\leq
  \sharp
  \big\{\text{Eigenvalues of $\Op_{\a,N}$ in $[0, 4 C_b]$}  \big\}
\end{align}
which is in turn bounded by the number of eigenvalues of the negative Neumann Laplacian
on $\Pi_{\a,N}$ up to energy $4C_b+ \Vert V_0\Vert_\infty$. This can be bounded by
\[
\tilde C_{\text{Weyl}}(n) (4C_b+ \Vert V_0\Vert_\infty)^{\frac{n+1}{2}} |\square| N^n +2^{n+1}
\leq
 C_{\text{Weyl}}(n) \left[1 + (4C_b+ \Vert V_0\Vert_\infty)^{\frac{n+1}{2}} |\square| N^n \right].
\]
\end{proof}

\section{Localization}
\label{sec:MSA}

In this section, we prove Theorems~\ref{thm:localization_C1} and~\ref{thm:localization_C2} on localization for operators on the multidimensional layer.
Theorem~\ref{thm:localization_C1} treats \textbf{Case I}, where the spectrum expands linearly with $\e$.
Theorem~\ref{thm:localization_C2} treats \textbf{Case II}, corresponding to a quadratic expansion of $\Sigma_\e$.

Theorems~\ref{thm:localization_C1'} and~\ref{thm:localization_C2'} for operators acting on the entire space
then follow simply by formula (\ref{5.24b}).
Throughout this section we assume that Assumptions~\ref{A1} and \ref{B1+2} hold.

To prove localization, we shall invoke the multi-scale analysis from~\cite{GerminetK-03}.
For this purpose it is necessary to check a number of basic properties of the model  referred usually to as ``Independence at distance'' (IAD), ``Weyl asymptotics'' (NE), ``Simon-Lieb inequality'' (SLI), ``Eigenfunction decay inequality'' (EDI), ``Strong generalized resolvent expansion'' (SGEE), ``Wegner estimate'' (W) as well as an ``Initial scale estimate'' (ISE).
We shall address their validity in the subsequent subparagraphs.

Given $\a\in\G$ and a natural $N\geqslant 7$, by  $\Ups_{\a,N}$ we denote the set
\begin{equation*}
\Ups_{\a,N}:=\Pi_{\a+e_0,N-2}\setminus\overline{\Pi_{\a+3e_0,N-6}},\qquad e_0:=\sum\limits_{j=1}^{n}e_j.
\end{equation*}
The shape of this set is a `belt' formed by cells $\square_k$ located along the lateral boundary of $\Pi_{\a,N}$. The width of this `belt' is two cells in each direction $e_j$ and this `belt' is separated from the lateral boundary of $\Pi_{\a,N}$ by one cell. By $\chi_{\a,N}$ we denote the characteristic function of $\Ups_{\a,N}$.

\begin{theorem}[{\cite[Theorem~2.4]{GerminetK-03}}]
 \label{thm:ISE_GerminetK}
 Assume that the Assumptions (IAD), (NE), (SLI), (EDI), (SGEE), and (W)  hold in an open interval $\mathcal{I}$.
 Fix a length scale $N_0\in 6 \NN$,
 \[
  N_0
  \geq
  \max
  \left\{
    6,
    3 \rho,
    \eta_\cI^{- \left[ \left( \frac{5}{3} + n \right) \right]^{-1}}
  \right\},
 \]
 where $\rho:=\rho_{\mathrm{IAD}}$ is the parameter from (IAD), and $\eta_\cI$ is a parameter from (W).
 Then, all $E_0 \in \Sigma \cap \mathcal{I}$ such that the initial scale estimate (ISE)
 \begin{equation}
  \label{eq:ISE_GerminetK}
  \PP
  \left\{
    D_\cI
    N_0^{\left( \frac{11 n}{3} \right)}
    \lVert
      \chi_{\Ups_{\a,N_0}} (\Op_{\a,N_0}^\e(\omega) - E_0)^{-1}  \chi_{\a + N_0/2 e_0, N_0/3}
    \rVert
    < 1
  \right\}
  \geq
  1 - \frac{2}{368^n}
 \end{equation}
 holds with
 \[
  D_\cI
  =
  39^{5 n}
  \max
  \left\{
    16 \cdot 60^n Q_I,
    1
  \right\}
  \theta_\cI^2
 \]
 where $Q_\cI$ is the constant in the upper bound of the Wegner estimate and $\theta_\cI$ is the constant\footnote{
In the notation of \cite[Theorem~2.4]{GerminetK-03} the constant $\theta_\cI$ is called $\kappa_\cI$.} in the Simon-Lieb inequality,
 belong to the region in which spectral and dynamical localization holds.
\end{theorem}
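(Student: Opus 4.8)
The plan is to invoke the bootstrap multi-scale analysis of Germinet and Klein directly: the statement above is nothing but \cite[Theorem~2.4]{GerminetK-03} recast in the notation of the present paper, so there is essentially nothing to prove here beyond checking that our abstract framework is an instance of theirs. First I would recall that the Germinet--Klein scheme is formulated for a rather general class of operators (here on the layer $\Pi\subset\RR^{n+1}$) once one has fixed a way of restricting the operator to boxes, a notion of ``belt'' near the boundary of a box on which the cut-off $\chi_{\a,N}$ is supported, and the six structural properties (IAD), (NE), (SLI), (EDI), (SGEE), (W). All geometric ingredients match: our boxes are the $\Pi_{\a,N}$, our belts are the sets $\Ups_{\a,N}$ (a two-cell-wide frame separated from $\p\Pi_{\a,N}$ by one cell), and the length scales are restricted to the sublattice $6\NN$ precisely so that the $N_0/2$- and $N_0/3$-subcubes appearing in \eqref{eq:ISE_GerminetK} are well defined.

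Second, I would stress that the theorem is purely conditional: it \emph{assumes} that (IAD), (NE), (SLI), (EDI), (SGEE) and (W) hold on an open interval $\cI$, so the only genuine work --- verifying these six properties for the random operators $\Op^\e(\om)$ --- is carried out in the subsequent subsections of this section, where (W) is supplied by Theorem~\ref{thm:Wegner} and the remaining, more ``geometric'' properties follow from standard resolvent identities together with Assumption~\ref{B1+2}. Once these are in place, one feeds the initial scale estimate \eqref{eq:ISE_GerminetK} --- which for us follows from Theorem~\ref{thm:ISE} after rewriting its probability bound and polynomial prefactor in the form $D_\cI N_0^{11n/3}$ and $1-2/368^n$ used by Germinet--Klein --- into the induction, and the conclusion of spectral and dynamical localization at all energies $E_0\in\Sigma\cap\cI$ at which \eqref{eq:ISE_GerminetK} holds is exactly the output of \cite[Theorem~2.4]{GerminetK-03}.

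The only point requiring care is the bookkeeping of constants: one must verify that $D_\cI$ is assembled from the Wegner constant $Q_\cI$ and the Simon--Lieb constant $\theta_\cI$ in precisely the displayed way (the footnote records the single notational change $\kappa_\cI\rightsquigarrow\theta_\cI$), and that the lower bound imposed on $N_0$ in terms of the independence-at-distance radius $\rho_{\mathrm{IAD}}$ and the Wegner parameter $\eta_\cI$ is inherited verbatim. I expect this constant-matching to be the only ``obstacle'', and it is a routine, purely notational one; the substance of the argument lives entirely in \cite{GerminetK-03} and in the verification of its hypotheses performed later in this section.
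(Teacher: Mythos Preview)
Your reading is correct and matches the paper exactly: Theorem~\ref{thm:ISE_GerminetK} is simply quoted from \cite[Theorem~2.4]{GerminetK-03} without proof, and the actual work consists of verifying the hypotheses (IAD), (NE), (SLI), (EDI), (SGEE), (W) in the subsequent subsections of Section~\ref{sec:MSA}. There is no independent argument in the paper for this statement, so your proposal to treat it as a black-box citation with constant-matching is precisely what the authors do.
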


\begin{remark}
 $\eta_\cI$ is the maximal value of $\kappa$ in the Wegner estimate and is equal to $D \e^2/4$ in~\eqref{C1} or $D \e^3/4$ in~\eqref{C2}.
\end{remark}

Let us check that all assumptions from Theorem~\ref{thm:ISE_GerminetK} are satisfied, assuming now both Assumptions \ref{A1} and \ref{B1+2}:

\subsection{Independence at a distance (IAD)}

By construction, the restrictions $\Op_{B_1}(\omega)$, $\Op_{B_2}(\omega)$ of $\Op(\omega)$ to disjoint open
$B_1, B_2\subset\Pi$ are independent random operators as soon as the distance between the sets $B_1$ and $B_2$
is larger than $\rho_{\mathrm{IAD}}:= \mathrm{diam}(\square)$.

\subsection{Weyl asymptotics (NE)}
From (\ref{eq:L-relative-bound}) and (\ref{eq:finite-number}) we infer that the following Weyl asymptotics holds:
There is a constant $C_\weyl = C_\weyl(n)$,
depending only on the dimension $n$, and a constant $C_b$, depending on the operator norms of $\cL_1$, $\cL_2$, $\cL_3(\cdot)$, $\cL_3'(\cdot)$ as operators $W^{2,2}(\square) \to L^2(\square)$ such that for all $\omega \in \Omega$, $N \in \NN$, $\alpha \in \Gamma$, $\epsilon \in (0, t_0] \subset [0,1]$ we have
\[
 \Tr \chi_{(- \infty, 0]}
 ( \Op_{\a,N}^\e(\omega))
 \leq
 C_\weyl
 \left[
    1
    +
    \left(
      4 C_b + \lVert V_0 \rVert_\infty
    \right)^{\frac{n + 1}{2}}
    \lVert \square \rvert
    N^n
 \right].
\]
 In particular, for every interval $\cI \subset (- \infty, 0]$ condition (NE) holds with
 \[
  C_\cI
  \leq
  C_\weyl
 \left[
    1
    +
    \left(
      4 C_b + \lVert V_0 \rVert_\infty
    \right)^{\frac{n + 1}{2}}
 \right].
 \]

\subsection{Simon-Lieb inequality (SLI)}\label{ss:SLI}

We recall that $\Op_{\a,N}^\e(\xi)$ and $\cL_{\a,N}^\e(\xi)$ are the restrictions of $\Op^\e(\xi)$ and $\cL^\e(\xi)$ on $L^2(\Pi_{\a,N})$, where $\Op_{\a,N}^\e(\xi)$ is equipped with  Dirichlet or Mezincescu conditions (\ref{4.1}) on the lateral boundary $\g_{\a,N}$ and with condition (\ref{eq:boundary-condition-B}) on $\p\Pi\cap\p\Pi_{\a,N}$.

We choose $L,\ell\in\mathds{N}$, $\ell\geqslant 7$, $L\geqslant 7$, and $\a,\b\in\G$
such that $\Pi_{\b,\ell}\subset \Pi_{\a+3e_0,L-6}$.
For brevity we denote   $U:=\Pi_{\b+3\e_0,\ell-6}$ and $\chi_U$ denotes the characteristic function of this set.
We denote
\begin{equation*}
c_5:=3{\min\limits_{i=1,\ldots,n}}^{-1}|e_i|,\qquad c_{11}:=2\max |V_0|+2.
\end{equation*}

We are going to prove the (SLI) for both types of the boundary conditions on $\g_{\a,N}$.

\begin{lemma}\label{lm-SLI}
Suppose that the operators $\Op_{\a,L}^\e(\xi)$ and $\Op_{\b,\ell}^\e(\xi)$ are equipped with Dirichlet or Mezincescu conditions (\ref{4.1}) on the lateral boundaries $\g_{\a,L}$ and $\g_{\b,\ell}$.
if $\l\not\in\spec(\Op_{\a,L}^\e(\xi))\cup \spec(\Op_{\b,\ell}^\e(\xi))$, the estimate holds:
\begin{align*}
\big\|\chi_{\a,L}&\big(\Op_{\a,L}^\e(\xi)-\l\big)^{-1} \chi_U \big\|_{L^2(\Pi_{\b,\ell})\to L^2(\Ups_{\a,L})}
\\
\leqslant&
\theta_\cI
\big\|\chi_{\b,\ell}\big(\Op_{\b,\ell}^\e(\xi)-\l\big)^{-1} \chi_U \big\|_{L^2(\Pi_{\b,\ell})\to L^2(\Ups_{\b,\ell})}
\big\|\chi_{\a,L}\big(\Op_{\a,L}^\e(\xi)-\l\big)^{-1}\chi_{\b,\ell} \big\|_{L^2(\Ups_{\b,\ell})\to L^2(\Ups_{\a,L})},
\end{align*}
where $\theta_\cI:=(c_6+1)^2\sqrt{2|\l|+ c_{11}+4nc_5}+1$.
\end{lemma}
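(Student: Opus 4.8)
The plan is to derive a geometric resolvent inequality in the spirit of the Simon--Lieb estimate used in the multi-scale analysis, following the scheme of \cite{GerminetK-03} and of the waveguide papers \cite{BorisovV-11,BorisovGV-16,Borisov-17}. The starting point is a cut-off function: I would fix $\vp\in C^\infty(\overline{\square'})$ \emph{depending only on the longitudinal variable} $x'$ (so that Assumptions~\ref{B1} and~\ref{B2} apply), with $\vp\equiv 1$ on a neighbourhood of $U=\Pi_{\b+3e_0,\ell-6}$, $\supp\vp\subset\Pi_{\b,\ell}$, and $\supp\nabla\vp\subset\Ups_{\b,\ell}$. Since the transition of $\vp$ takes place over a region of the size of one cell, $\|\nabla\vp\|_{L^\infty}$ and $\|\vp\|_{C^2(\overline\square)}$ are controlled by $c_5=3(\min_i|e_i|)^{-1}$ and its square.

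Given $f\in L^2(\Pi_{\b,\ell})$, set $w:=(\Op_{\b,\ell}^\e(\xi)-\l)^{-1}\chi_U f$ and $v:=\vp w$, extended by zero to $\Pi_{\a,L}$. The first step is to verify $v\in\Dom(\Op_{\a,L}^\e(\xi))$: its support lies in $\Pi_{\b,\ell}\subset\Pi_{\a+3e_0,L-6}$, hence away from both $\g_{\b,\ell}$ and $\g_{\a,L}$, so the lateral boundary conditions --- Dirichlet or Mezincescu --- play no role; and $\cB(\vp w)=\vp\,\cB w=0$ on $\p\Pi$ because $\vp$ does not depend on $x_{n+1}$. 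Moreover $\G_{\b,\ell}\subset\G_{\a,L}$ and the single-site terms $\cS(k)\cL(\e\xi_k)\cS(-k)$ with $k\in\G_{\a,L}\setminus\G_{\b,\ell}$ annihilate $v$ (which vanishes on the corresponding cells), so $\cL_{\a,L}^\e(\xi)v=\cL_{\b,\ell}^\e(\xi)v$. Hence
\begin{equation*}
(\Op_{\a,L}^\e(\xi)-\l)v=\vp\,(\Op_{\b,\ell}^\e(\xi)-\l)w+R=\chi_U f+R,\qquad R:=[-\D,\vp]w+[\cL_{\b,\ell}^\e(\xi),\vp]w,
\end{equation*}
where $\vp\chi_U f=\chi_U f$ was used; the commutators vanish on cells on which $\vp$ is constant, so $R$ is supported in $\Ups_{\b,\ell}$, i.e.\ $R=\chi_{\b,\ell}R$. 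Since $\supp\vp\cap\Ups_{\a,L}=\emptyset$ (as $\Pi_{\b,\ell}\subset\Pi_{\a+3e_0,L-6}$ is disjoint from $\Ups_{\a,L}=\Pi_{\a+e_0,L-2}\setminus\overline{\Pi_{\a+3e_0,L-6}}$), we get $\chi_{\a,L}v=0$, and applying $(\Op_{\a,L}^\e(\xi)-\l)^{-1}$ yields
\begin{equation*}
\chi_{\a,L}(\Op_{\a,L}^\e(\xi)-\l)^{-1}\chi_U f=-\chi_{\a,L}(\Op_{\a,L}^\e(\xi)-\l)^{-1}\chi_{\b,\ell}R,
\end{equation*}
whence $\|\chi_{\a,L}(\Op_{\a,L}^\e(\xi)-\l)^{-1}\chi_U f\|_{L^2(\Ups_{\a,L})}\leq\|\chi_{\a,L}(\Op_{\a,L}^\e(\xi)-\l)^{-1}\chi_{\b,\ell}\|_{L^2(\Ups_{\b,\ell})\to L^2(\Ups_{\a,L})}\,\|R\|_{L^2(\Ups_{\b,\ell})}$.

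It remains to prove $\|R\|_{L^2(\Ups_{\b,\ell})}\leq\theta_\cI\,\|\chi_{\b,\ell}w\|_{L^2(\Ups_{\b,\ell})}$, because then, bounding $\|\chi_{\b,\ell}w\|_{L^2(\Ups_{\b,\ell})}\leq\|\chi_{\b,\ell}(\Op_{\b,\ell}^\e(\xi)-\l)^{-1}\chi_U\|_{L^2(\Pi_{\b,\ell})\to L^2(\Ups_{\b,\ell})}\,\|f\|$ and taking the supremum over $f$ gives the lemma. Expanding $[-\D,\vp]w=-(\D\vp)w-2\nabla\vp\cdot\nabla w$ and estimating $[\cL_{\b,\ell}^\e(\xi),\vp]w$ cell by cell via Assumption~\ref{B2}, one bounds $\|R\|_{L^2(\Ups_{\b,\ell})}$ by a multiple of $\|w\|_{W^{1,2}(\Ups_{\b,\ell})}$. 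The decisive ingredient is then a Caccioppoli-type estimate for $w$: since $\chi_U f$ vanishes on $\Pi_{\b,\ell}\setminus\overline U\supset\Ups_{\b,\ell}$, one has $(\Op_{\b,\ell}^\e(\xi)-\l)w=0$ there; pairing this identity with $\z^2\overline w$ and integrating by parts, where $\z\in C^\infty(\overline{\square'})$ equals $1$ on $\supp\nabla\vp$, has $\supp\z\subset\Pi_{\b,\ell}\setminus\overline U$ and $\supp\nabla\z\subset\Ups_{\b,\ell}$, and vanishes on $\g_{\b,\ell}$ (so the Mezincescu boundary term drops out and the argument works verbatim for Dirichlet conditions), and using the form bound of Assumption~\ref{B1} to absorb the $\cL_{\b,\ell}^\e(\xi)$-contribution into the Dirichlet energy --- licit because $\e\le t_0$ is small, so the corresponding coefficient stays strictly below $1$ --- one obtains, after collecting the lower-order terms, $\|\nabla w\|_{L^2(\supp\nabla\vp)}^2\leq(2|\l|+c_{11}+4nc_5)\,\|w\|_{L^2(\Ups_{\b,\ell})}^2$, with $|\l|$ from the spectral parameter, $c_{11}=2\max|V_0|+2$ from $V_0$ and the absorption, and $4nc_5$ from $\|\nabla\z\|_{L^\infty}$. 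Inserting this back, the factor $(c_6+1)^2$ accounting for the repeated use of Assumption~\ref{B1} (once in the Caccioppoli bound, once in controlling the $\cL$-part of $R$) and the residual lower-order terms absorbed into the additive $+1$, produces $\theta_\cI=(c_6+1)^2\sqrt{2|\l|+c_{11}+4nc_5}+1$.

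The main obstacle is precisely this last step: obtaining a \emph{quantitative} Caccioppoli inequality for $-\D+V_0+\cL_{\b,\ell}^\e(\xi)$ in which the abstract, possibly non-local perturbation $\cL^\e$ is genuinely absorbed into the gradient term --- exactly what Assumption~\ref{B1} is designed for --- while simultaneously placing the supports of $\vp$ and $\z$ inside the two-cell belt $\Ups_{\b,\ell}$ and away from $\g_{\b,\ell}$, so that the bound is uniform in the choice of lateral boundary condition, and tracking every constant to recover the stated value of $\theta_\cI$. The remaining parts --- the domain check for $v$, the cancellation $\cL_{\a,L}^\e(\xi)v=\cL_{\b,\ell}^\e(\xi)v$, and the support identities --- are routine once the cut-offs are set up.
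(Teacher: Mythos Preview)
Your proof is correct and follows essentially the same approach as the paper's. The paper's argument uses two cut-offs $\chi_{\mathrm{in}}$ and $\chi_{\mathrm{out}}$ playing exactly the roles of your $\vp$ and $\z$, derives the same geometric resolvent identity (phrased there as $v:=u_{\a,L}-\chi_{\mathrm{in}}u_{\b,\ell}$ rather than working directly with $\vp w$, but this is the same identity), and obtains the Caccioppoli bound by pairing the equation with $\chi_{\mathrm{out}}^2\overline{u_{\b,\ell}}$ and invoking Assumption~\ref{B1}. One small correction: the $\cL$-part of the commutator $R$ is controlled via Assumption~\ref{B2} (the commutator bound), not~\ref{B1}; the latter enters only in the Caccioppoli step.
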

\begin{proof}
We choose $f\in L^2(\Pi_{\b,\ell})$ arbitrary and denote $$
u_{\a,L}:=\big(\Op_{\a,L}^\e(\xi)-\l\big)^{-1}\chi_U f,\qquad u_{\b,\ell}:=\big(\Op_{\b,\ell}^\e(\xi)-\l\big)^{-1}\chi_U f.
$$
By $\chi_{\mathrm{in}}=\chi_{\mathrm{in}}(x')$, $\chi_{\mathrm{out}}=\chi_{\mathrm{out}}(x')$,  we denote an infinitely differentiable cut-off function such that
\begin{equation}\label{9.15}
\begin{aligned}
&\chi_{\mathrm{in}}\equiv 1\quad\text{on}\quad \Pi_{\b+3e_0,\ell-6},\qquad \chi_{\mathrm{in}}\equiv 0\quad\text{on}\quad \Pi_{\b,\ell}\setminus\Pi_{\b+e_0,\ell-2},
\\
&\chi_{\mathrm{out}}\equiv 0 \quad\text{on}\quad \Pi_{\b,\ell}\setminus\Ups_{\b,\ell}, \qquad \chi_{\mathrm{out}}(x)=1\quad\text{ where }\quad \chi_{\mathrm{in}}(x)\ne0 \quad\text{and}\quad \chi_{\mathrm{in}}(x)\ne 1,
\\
&0\leqslant \chi_\flat\leqslant 1,\quad \left|\frac{\p\chi_\flat}{\p x_i}\right|\leqslant c_5,\quad \left|\frac{\p^2\chi_\flat}{\p x_i\p x_j}\right|\leqslant c_5^2 \quad\text{on}\quad\Ups_{\b,\ell},\quad \flat\in\{\mathrm{in},\mathrm{out}\}.
\end{aligned}
\end{equation}
Such functions obviously exist.

We extend the function $u_{\b,\ell}$ by zero outside $\Pi_{\b,\ell}$ and denote $v:=u_{\a,L}-\chi_{\mathrm{in}} u_{\b,\ell}$.
In view of $\Pi_{\b,\ell}\subset \Pi_{\a+3e_0,L-6}$, the function $v$ satisfies the identity
\begin{equation}\label{9.16}
v=u_{\a,L}\quad\text{on}\quad \Ups_{\a,L},
\end{equation}
belongs to the domain of $\Op_{\a,L}^\e(\xi)$ and solves the equation
\begin{equation}\label{9.17}
\begin{aligned}
&\big(\Op_{\a,L}^\e(\xi)-\l\big) v=g=\chi_{\b,\ell}g,
\\
&g:=2\nabla\chi_{\mathrm{in}}\cdot \nabla u_{\b,\ell}+u_{\b,\ell}\D\chi_{\mathrm{in}} +\big(\chi_{\mathrm{in}}\cL_{\b,\ell}^\e(\xi)-\cL_{\b,\ell}^\e(\xi)\chi_{\mathrm{in}}\big)u_{\b,\ell}.
\end{aligned}
\end{equation}
The function $g$ is supported in the set $\Ups_{\b,\ell}$. Indeed, this is obvious for the first two terms in its definition thanks to properties of $\chi$. And due to the definition of $\cL_{\b,\ell}^\e(\xi)$, the difference $\big(\chi_{\mathrm{in}}\cL_{\b,\ell}^\e(\xi)-\cL_{\b,\ell}^\e(\xi)\chi_{\mathrm{in}}\big)u_{\b,\ell}$ vanishes on each cell $\square_k$, on which $\chi_{\mathrm{in}}$ is identically constant.

It follows from (\ref{9.16}), (\ref{9.17}) that
\begin{align} \nonumber
\|\chi_{\a,L} u_{\a,L}\|_{L^2(\Ups_{\a,L})}
=&\|\chi_{\a,L} v\|_{L^2(\Ups_{\a,L})}
=\big\|\chi_{\a,L} \big(\Op_{\a,L}^\e(\xi)-\l\big)^{-1}\chi_{\b,\ell}g
\big\|_{L^2(\Ups_{\a,L})}
\\ \label{9.18}
\leqslant & \big\|\chi_{\a,L} \big(\Op_{\a,L}^\e(\xi)-\l\big)^{-1}\chi_{\b,\ell}
\big\|_{L^2(\Ups_{\b,\ell})\to L^2(\Ups_{\a,L})} \|g\|_{L^2(\Ups_{\b,\ell})}.
\end{align}

It remains to estimate $\|g\|_{L^2(\Ups_{\b,\ell})}$. Applying Assumption~\ref{B2} and the estimates for the derivatives in (\ref{9.15}), we see immediately that
\begin{equation}\label{9.21}
\|g\|_{L^2(\Ups_{\b,\ell})} \leqslant 2 {c_5} \|\nabla u_{\b,\ell}\|_{L^2(\supp \chi_{\mathrm{in}})} + {c_5}^2 \| u_{\b,\ell}\|_{L^2(\supp \chi_{\mathrm{in}})} + \e c_9 \| u_{\b,\ell}\|_{W^{1,2}(\supp \chi_{\mathrm{in}})},
\end{equation}
where $c_9$ is some absolute constant independent of $\e$ and $u_{\b,\ell}$.

To estimate $\| u_{\b,\ell}\|_{W^{1,2}(\supp \chi_{\mathrm{in}})}$, we write the integral identity associated with the equation for $u_{\b,\ell}$ choosing $\chi_{\mathrm{out}}^2 u_{\b,\ell}$ as a test function:
\begin{equation}\label{9.22}
\begin{aligned}
(\nabla u_{\b,\ell}, \nabla \chi_{\mathrm{out}}^2 u_{\b,\ell})_{L^2(\Ups_{\b,\ell})}&+(V_0 u_{\b,\ell},u_{\b,\ell})_{L^2(\Ups_{\b,\ell})}-\l\|\chi_{\mathrm{out}} u_{\b,\ell}\|_{L^2(\Ups_{\b,\ell})}
\\
&+ (\cL_{\b,\ell}^\e(\xi) u_{\b,\ell}, \chi_{\mathrm{out}}^2 u_{\b,\ell})_{L^2(\Ups_{\b,\ell})}
 = (f\chi_U, \chi_{\mathrm{out}}^2 u_{\b,\ell})_{L^2(\Ups_{\b,\ell})}=0,
\end{aligned}
\end{equation}
where we have also employed that the supports of $\chi_U$ and $\chi_{\mathrm{out}}$ are disjoint. We transform the first term in the above identity as follows:
\begin{equation}\label{9.27}
\begin{aligned}
(\nabla u_{\b,\ell}, \nabla \chi_{\mathrm{out}}^2 u_{\b,\ell})_{L^2(\Ups_{\b,\ell})}=& \|\chi_{\mathrm{out}} \nabla u_{\b,\ell}\|_{L^2(\Ups_{\b,\ell})}^2 + 2 (\nabla u_{\b,\ell}, u_{\b,\ell}\chi_{\mathrm{out}}\nabla \chi_{\mathrm{out}})_{L^2(\Ups_{\b,\ell})}
\\
=& \|\chi_{\mathrm{out}} \nabla u_{\b,\ell}\|_{L^2(\Ups_{\b,\ell})}^2 - \big((|\nabla\chi_{\mathrm{out}}|^2+\chi_{\mathrm{out}} \D \chi_{\mathrm{out}}) u_{\b,\ell}, u_{\b,\ell})_{L^2(\Ups_{\b,\ell})}.
\end{aligned}
\end{equation}
The third term in the left hand side in (\ref{9.22}) can be estimated by means of Assumption~\ref{B1}:
\begin{equation}\label{9.28}
\big|(\cL_{\b,\ell}^\e(\xi) u_{\b,\ell}, \chi_{\mathrm{out}}^2 u_{\b,\ell})_{L^2(\Ups_{\b,\ell})}\big|\leqslant \e \left(c_6 \|\chi_{\mathrm{out}} \nabla u_{\b,\ell}\|_{L^2(\Ups_{\b,\ell})}^2 + c_{10}\| u_{\b,\ell}\|_{L^2(\Ups_{\b,\ell})}^2 \right),
\end{equation}
where $c_{10}$ is some absolute constant independent of $\e$ and $u_{\b,\ell}$. We substitute the obtained relations into (\ref{9.22}) and arrive at the inequality
\begin{equation}\label{9.29}
\begin{aligned}
\|\chi_{\mathrm{out}} \nabla u_{\b,\ell}\|_{L^2(\Ups_{\b,\ell})}^2 &+ \big((\l+V_0-|\nabla\chi_{\mathrm{out}}|^2-\chi_{\mathrm{out}} \D \chi_{\mathrm{out}}) u_{\b,\ell}, u_{\b,\ell})_{L^2(\Ups_{\b,\ell})}
\\
&-\e \left(c_6 \|\chi_{\mathrm{out}} \nabla u_{\b,\ell}\|_{L^2(\Ups_{\b,\ell})}^2 + c_{10}\| u_{\b,\ell}\|_{L^2(\Ups_{\b,\ell})}^2 \right)\leqslant 0,
\end{aligned}
\end{equation}
and for sufficiently small $\e$ we get:
\begin{equation}\label{9.30}
\|\chi_{\mathrm{out}} \nabla u_{\b,\ell}\|_{L^2(\Ups_{\b,\ell})}^2 \leqslant 2 (|\l|+1+\max|V_0|+2n c_5)\|u_{\b,\ell}\|_{L^2(\Ups_{\b,\ell})}^2.
\end{equation}
Since $\chi_{\mathrm{out}}\equiv 1$ on $\supp|\nabla \chi_{\mathrm{in}}|\cup \supp\D\chi_{\mathrm{out}}$, by the latter identity and (\ref{9.21}) we arrive at the final estimate for $g$:
\begin{equation}\label{9.31}
\|g\|_{L^2(\Pi_{\b,\ell})}\leqslant \theta_\cI \|u_{\b,\ell}\|_{L^2(\Ups_{\b,\ell})}
\leqslant \theta_\cI
\big\|\chi_{\b,\ell}\big(\Op_{\b,\ell}^\e(\xi)-\l\big)^{-1}\chi_U
\big\|_{L^2(\Pi_{\b,\ell})\to L^2(\Ups_{\b,\ell})}\|f\|_{L^2(\Pi_{\b,\ell})}.
\end{equation}
The obtained estimate and (\ref{9.18}) imply the statement of the lemma.
\end{proof}

\subsection{Strong generalized eigenfunction expansion (SGEE)}
Here we show that condition (SGEE) is satisfied under our assumptions. We denote
\begin{align*}
&\Dom_+^\e(\xi):=\big\{u:\, u\in\Dom(\Op^\e(\xi))\cap L^2_+(\Pi),\ \Op^\e(\xi)u\in L^2_+(\Pi)\big\},
\\
&L^2_+(\Pi):=\left\{u:\, \int\limits_{\Pi} |u|^2 (1+|x'|^2)^{2m}\di x\right\},
\end{align*}
where $m>\tfrac{n}{4}$ is some fixed integer number to be chosen later. Let $\cT$ be an operator in $L^2(\Pi)$ of multiplication by $(1+|x'|^2)^{-2m}$ and $\cP_\bot^\e(\xi)$ be the orthogonal projector in $L^2(\Pi)$ onto the orthogonal complement to the kernel of $\Op^\e(\xi)$, that is, to the closure of the range of $\Op^\e(\xi)$.
According to the formulation of (SGEE) in \cite[Sect. 2.3]{GerminetK-01a} and equation (2.36) in \cite{GerminetK-01a}, it is sufficient to prove the following lemma.

\begin{lemma}\label{lm:SGEE}
For $m>\tfrac{n}{4}$, the set $\Dom_+^\e(\xi)$ is dense in $L^2_+(\Pi)$ and an operator core for $\Op^\e(\xi)$,
for any $\xi \in \Omega$.
Furthermore, there exists a bounded continuous function $f$ strictly positive on the spectrum of $\Op^\e(\xi)$ such that
\begin{equation*}
\Tr\big(\cT f(\Op^\e(\xi))\cP_\bot^\e(\xi)\cT\big)\leqslant c_{12},
\end{equation*}
where $c_{12}$ is a some constant independent of $\e$ and $\xi$. \end{lemma}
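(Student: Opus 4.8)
The plan is to treat the two assertions of the lemma separately: the statement that $\Dom_+^\e(\xi)$ is dense in $L^2_+(\Pi)$ and an operator core for $\Op^\e(\xi)$, which is elementary, and the trace bound, which is the substantial part. First I would dispose of the density and core statement. Since $(1+|x'|^2)^{2m}\geq1$, any $u\in C_c^\infty(\Pi)$ (compactly supported in the \emph{open} layer) lies in $L^2_+(\Pi)\cap\H2(\Pi)$, satisfies (\ref{eq:boundary-condition-B}) trivially, and has $\Op^\e(\xi)u=-\D u+V_0u+\cL^\e(\xi)u$ again compactly supported in $x'$ (the summand $\cL^\e(\xi)u$ is supported in the finitely many cells meeting $\supp u$); hence $C_c^\infty(\Pi)\subset\Dom_+^\e(\xi)$, and density in $L^2_+(\Pi)$ follows from density of $C_c^\infty(\Pi)$ there. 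For the core property, given $u\in\Dom(\Op^\e(\xi))$ I would pick cut-offs $\chi_R\in C_c^\infty(\RR^n)$ with $\chi_R\equiv1$ on $\{|x'|\leq R\}$, $\chi_R\equiv0$ on $\{|x'|\geq2R\}$, $\|\nabla^\beta\chi_R\|_\infty\leq C_\beta R^{-|\beta|}$. Then $\chi_Ru\in\Dom_+^\e(\xi)$ and
\[
\Op^\e(\xi)(\chi_Ru)-\chi_R\Op^\e(\xi)u=-2\nabla\chi_R\cdot\nabla u-(\D\chi_R)u+\big(\cL^\e(\xi)\chi_R-\chi_R\cL^\e(\xi)\big)u ;
\]
the first two terms tend to $0$ in $L^2(\Pi)$ by the bounds on $\chi_R$ and $u\in W^{1,2}(\Pi)$, and the commutator term, supported in the bounded-width annulus of cells on which $\chi_R$ is non-constant, is bounded there by $C\,\|u\|_{\H2(\text{annulus})}$ (each $\cL_i$ is bounded $\H2(\square)\to L^2(\square)$ and acts within one cell, and $\|\chi_R\|_{C^2}\leq C$), which also tends to $0$ since $u\in\H2(\Pi)$. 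Together with $\chi_Ru\to u$ in $L^2(\Pi)$ this gives graph-norm convergence.

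For the trace bound I would first record the uniform structure. By the Convention on $t_0$ and the Kato--Rellich theorem the operators $\Op^\e(\xi)$ are self-adjoint and bounded below uniformly in $\e\in(0,t_0]$ and $\xi\in\Omega$; since $\cL_i\colon\H2(\square)\to L^2(\square)$ bounded and symmetric implies (by duality/interpolation) that it is bounded $W^{1,2}\to W^{-1,2}$ with the same norm, the relative form bound of $\cL^\e(\xi)$ with respect to $-\D$ is $O(\e)$, so there are $M>0$ and $0<c_*\leq C_*<\infty$, all independent of $\e$ and $\xi$, such that $\spec(\Op^\e(\xi))\subset[-M/2,\infty)$ and, as forms on $W^{1,2}(\Pi)$ subject to (\ref{eq:boundary-condition-B}),
\begin{equation}
\label{eq:two-sided-SGEE}
c_*(-\D+M)\ \leq\ \Op^\e(\xi)+M\ \leq\ C_*(-\D+M).
\end{equation}
Then I would fix $s>(n+1)/4$ and set $f(\lambda):=(\max\{\lambda,-M/2\}+M)^{-2s}$, a bounded continuous function strictly positive on $\RR$, so $f(\Op^\e(\xi))=(\Op^\e(\xi)+M)^{-2s}$. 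As $0\leq\cP_\bot^\e(\xi)\leq I$ and $f(\Op^\e(\xi))\geq0$,
\[
\Tr\big(\cT f(\Op^\e(\xi))\cP_\bot^\e(\xi)\cT\big)\ \leq\ \Tr\big(\cT f(\Op^\e(\xi))\cT\big)\ =\ \big\|\cT(\Op^\e(\xi)+M)^{-s}\big\|_{\mathrm{HS}}^2 ,
\]
so the whole statement reduces to an $\e,\xi$-uniform bound on the Hilbert--Schmidt norm of $\cT(\Op^\e(\xi)+M)^{-s}$.

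To get that bound I would use (\ref{eq:two-sided-SGEE}) together with the fact that $\Op^\e(\xi)$ and $-\D+V_0$ share the domain $\{u\in\H2(\Pi):(\ref{eq:boundary-condition-B})\}$ with uniformly comparable graph norms: a chain of resolvent identities (plus commutator estimates absorbing $[\cT,\,\cdot\,]$, which is harmless since $|\nabla\cT|\lesssim\langle x'\rangle^{-1}\cT$) replaces $\cT(\Op^\e(\xi)+M)^{-s}$ by $\cT(-\D+M)^{-s}$ up to a factor uniform in $\e,\xi$; this is the reduction carried out in \cite{BorisovV-11}, \cite{BorisovGV-16}, \cite{Borisov-17}. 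Finally, decomposing $L^2(\Pi)=\bigoplus_{j\geq0}L^2(\RR^n)\otimes\phi_j$ into the transverse eigenmodes $\phi_j$ of $-\partial_{x_{n+1}}^2$ on $(0,d)$ with (\ref{eq:boundary-condition-B}) (eigenvalues $\nu_j\asymp j^2$), on which $\cT$ acts as multiplication by $(1+|x'|^2)^{-2m}$, and using the trace-ideal estimate $\|g(x')h(-\iu\nabla')\|_{\mathrm{HS}}=(2\pi)^{-n/2}\|g\|_{L^2(\RR^n)}\|h\|_{L^2(\RR^n)}$ for $g,h\in L^2(\RR^n)$,
\[
\big\|\cT(-\D+M)^{-s}\big\|_{\mathrm{HS}}^2
=(2\pi)^{-n}\Big(\int_{\RR^n}(1+|x'|^2)^{-4m}\,\di x'\Big)\sum_{j\geq0}\int_{\RR^n}(1+|\xi|^2+\nu_j+M)^{-2s}\,\di\xi .
\]
The $x'$-integral is finite precisely because $m>n/4$, and the double series is finite because $s>(n+1)/4$ (a rescaling gives $\int_{\RR^n}(1+|\xi|^2+\nu_j+M)^{-2s}\,\di\xi\asymp(1+\nu_j)^{n/2-2s}$, which is summable in $j$). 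All constants entering depend only on $n$, $d$, $V_0$ and the norms of the $\cL_i$, so the resulting bound on $\Tr(\cT f(\Op^\e(\xi))\cP_\bot^\e(\xi)\cT)$ is an $\e,\xi$-independent constant $c_{12}$, as claimed.

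The non-routine point is the last reduction: passing from $-\D+V_0$ to $\Op^\e(\xi)$ at the level of $s$-th powers of resolvents while keeping every constant uniform in $\e$ and $\xi$. The two-sided form equivalence (\ref{eq:two-sided-SGEE}) is only directly usable for powers in $[0,1]$, so one must feed it through resolvent identities and, in order not to lose regularity at each step, exploit the smallness (uniform in $\e,\xi$) of the relative bound of $\cL^\e(\xi)$ and its cellwise structure; this is where the care lies. The density and core parts are, by contrast, straightforward.
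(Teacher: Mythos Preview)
Your strategy coincides with the paper's: drop the projection $\cP_\bot^\e(\xi)$, pass from $\Op^\e(\xi)$ to the free Laplacian on the layer, and finish with the transverse-mode decomposition together with the Hilbert--Schmidt identity $\|g(x')h(-\iu\nabla')\|_{\mathrm{HS}}=(2\pi)^{-n/2}\|g\|_{L^2}\|h\|_{L^2}$.

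The differences are in presentation rather than substance. For the core statement the paper is shorter: it takes the set of $C^\infty(\overline\Pi)$-functions obeying~(\ref{eq:boundary-condition-B}) and vanishing for large $|x'|$, notes that this is a core for $\Op^0$, and invokes Kato--Rellich to transfer the core property to $\Op^\e(\xi)$---no commutator computation is needed. For the trace bound, the paper uses only the \emph{one-sided} form inequality $\Op^\e(\xi)+c\geq\tfrac12(\Op_\cB+1)$ (derived from Assumption~\ref{B1+2}, which is standing in this section) and then passes directly to $\Tr\big(\cT(\Op_\cB+1)^{-2m}\cT\big)$, whereas you set up a two-sided comparison~\eqref{eq:two-sided-SGEE} and plan a resolvent-identity argument. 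The step you flag as ``the non-routine point'' is exactly where the paper is also terse (it writes ``by the trace properties''); so your proposal is at least as explicit as the paper's here. One genuine advantage of your write-up is the decoupling of the power $s$ in $f$ from the weight exponent $m$ in $\cT$: the paper ties them together via $f(x)=(x+c)^{-2m}$ and ends up needing $m>\tfrac{n}{4}+\tfrac12$, while your separate choice $s>(n+1)/4$ puts the constraint on $m$ only through the $x'$-integral, which is what the lemma's hypothesis $m>\tfrac n4$ is meant to cover. (Minor slip: that integral actually converges already for $m>n/8$, so ``precisely because $m>n/4$'' overstates the threshold.) Your citation of \cite{BorisovV-11,BorisovGV-16,Borisov-17} for the resolvent-identity reduction is not quite apt---those papers concern initial-scale estimates, not this trace comparison---so if you keep that route you should supply the argument yourself rather than defer to them.
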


\begin{proof}
Let $\Dom_+^0$ be the set of functions $u\in C^\infty(\overline{\Pi})$ obeying boundary conditions (\ref{eq:boundary-condition-B}) and vanishing for $|x'|$ large enough. It is clear that $\Dom_+^0$ is a subset of $\Dom_+^\e(\xi)$, is dense in $L^2_+(\Pi)$ and is an operator core of the operator $\Op^0$. By the assumed properties, the operator $\cL^\e(\xi)$ is $\Op^0$-bounded with a relative bound less than one as soon as $\e$ is small enough. Then by Kato-Rellich theorem, the set $\Dom_+^0$ is also a core for $\Op^\e(\xi)$ and this proves the first statement of the lemma.

We proceed to prove the bound for the trace.
For each $u\in\Dom_+^\e(\xi)$ and each bounded, continuous, positive function $f$ we have:
\begin{align*}
\big(\cT f(\Op^\e)(\xi)\cP_\bot^\e(\xi)\cT u,u\big)_{L^2(\Pi)}
=&\big(\sqrt{f(\Op^\e)(\xi)}\cT u, \cP_\bot^\e(\xi)\sqrt{f(\Op^\e)(\xi)}\cT u\big)_{L^2(\Pi)}
\\
\leqslant& \big( \cT u, f(\Op^\e)(\xi)\cT u\big)_{L^2(\Pi)}=\big(\cT f(\Op^\e)(\xi) \cT u, u\big)_{L^2(\Pi)}.
\end{align*}
Thus by the definition of the trace,
\begin{equation*}
\Tr\big(\cT f(\Op^\e(\xi))\cP_\bot^\e(\xi)\cT\big)\leqslant
\Tr\big(\cT f(\Op^\e(\xi)) \cT\big).
\end{equation*}

Thanks to Assumption~\ref{B1} with $\vp\equiv 1$, we obtain for all $u\in\Dom(\Op^\e(\xi))$
\begin{align*}
\big((\Op^\e(\xi)+c)u,u\big)_{L^2(\Pi)}\geqslant \|\nabla u\|_{L^2(\Pi)}^2 + (V_0u,u)_{L^2(\Pi)} + c\|u\|_{L^2(\Pi)}^2 -\e \left(c_6\|\nabla u\|_{L^2(\Pi)}^2+c_7 \|u\|_{L^2(\Pi)}^2\right).
\end{align*}
If we set $c:= 1+\|V_0\|_\infty+2Tc_7$ and choose $\e\leqslant t_0\leqslant(2c_6)^{-1}$
then we have for all $\xi\in\Omega$
\begin{equation*}
\Op^\e(\xi)+c\geqslant \frac{1}{2} (\Op_{\cB}+1),
\end{equation*}
where $\Op_{\cB}$ is the operator $-\D$ in $L^2(\Pi)$ subject to boundary conditions (\ref{eq:boundary-condition-B}); its domain is the same as of $\Op^0$.

Now set $f(x):=(x+c)^{-2 m}$, with $m\in\mathds{N}$ a constant, to be chosen later.
Since $f$ is monotone decreasing on $[0,+\infty)$
by the trace properties we conclude that
\begin{equation}\label{9.33}
\Tr\big(\cT f(\Op^\e(\xi)) \cT\big)\leqslant 2^{2 m} \Tr(\cT (\Op_{\cB}+1)^{-2 m}\cT)=2^{2 m} \Tr\big( ((\Op_{\cB}+1)^{- m}\cT)^* (\Op_{\cB}+1)^{- m}\cT\big).
\end{equation}

Let us show that $(\Op_{\cB}+1)^{-m}\cT$ is as Hilbert-Schmidt operator.
We consider the operator $-\frac{d^2\hphantom{x_{n1}}}{dx_{n+1}^2}$ in $(0,d)$ subject to boundary conditions (\ref{eq:boundary-condition-B}).
By $\lmb_q$, $q\in\mathds{N}$, we denote its eigenvalues taken in increasing order and by $\vp_q=\vp_q(x_{n+1})$ the associated eigenfunctions orthonormalized in $L^2(0,d)$.
For all possible choices of the operator $\cB$ in (\ref{eq:boundary-condition-B}), the eigenvalues $\lmb_q$ and eigenfunctions $\vp_q$ can be found explicitly. These formulae imply the lower bounds
\begin{equation}\label{9.34}
\lmb_q\geqslant \frac{\pi^2 (q-1)^2}{d^2},\qquad q\in\mathds{N}.
\end{equation}
For each $u\in L^2(\Pi)$, we have the representation
\begin{equation*}
u(x)=\sum\limits_{q=1}^{\infty} u_q(x')\vp_q(x_{n+1}),\qquad \|u\|_{L^2(\Pi)}^2=\sum\limits_{q=1}^{\infty} \|u_q\|_{L^2(\mathds{R}^n)}^2
\end{equation*}
and hence,
\begin{equation}\label{9.36}
(\Op_{\cB}+1)^{- m}\cT u=\sum\limits_{q=1}^{\infty} \big( (-\D_{x'}+\lmb_q+1)^{- m}\cT' u_q\big)\vp_q.
\end{equation}
Here $-\D_{x'}$ is the Laplacian and $\cT'$ is the multiplication operator by $(1+|x'|^2)^{-2m}$, both acting in $L^2(\mathds{R}^n)$.
As in the proof of Theorem~4.1 in \cite[Ch. 4]{Simon-05}, the operator $(-\D_{x'}+\lmb_{q}+1)^{-m}\cT'$ is integral and its kernel is $(2\pi)^{-\frac{n}{2}}(1+|y'|^2)^{-2m}g_q(x'-y')$, where $g_q$ is the inverse Fourier transform of the function $z\mapsto (|z|^2+\lmb_q+1)^{-m}$, $z\in\mathds{R}^n$.
Hence, by (\ref{9.36}), the operator $(\Op_{\cB}+1)^{- m}\cT$ is also integral with the kernel
\begin{equation*}
K(x,y):=(2\pi)^{-\frac{n}{2}}\sum\limits_{q=1}^{\infty} \vp_q(x_{n+1}) (1+|y'|^2)^{-2m}g_q(x'-y')\vp_q(y_{n+1}).
\end{equation*}
Let us find the $L^2(\Pi\times\Pi)$-norm of this kernel. By $|\mathds{S}^{n-1}|$ we denote the area of the unit sphere in $\mathds{R}^{n}$.
Thanks to identity (4.7) in the proof of Theorem~4.1 in \cite[Ch. 4]{Simon-05}, we have:
\begin{align*}
\|K\|_{L^2(\Pi\times\Pi)}^2=&(2\pi)^{-n}\sum\limits_{q=1}^{\infty}
\int\limits_{\mathds{R}^n\times\mathds{R}^n} (1+|y'|^2)^{-4m}|g_q(x'-y')|^2\di x'\di y'
\\
=&
(2\pi)^{-n}\int\limits_{\mathds{R}^n} \frac{1}{(1+|y'|^2)^{4m}}
\sum\limits_{q=1}^{\infty} \
\int\limits_{\mathds{R}^n}  |g_q(x'-y')|^2\di x'\di y'
\\
=&
(2\pi)^{-n}\int\limits_{\mathds{R}^n} \frac{dy'}{(1+|y'|^2)^{4m}}
\sum\limits_{q=1}^{\infty} \
\int\limits_{\mathds{R}^n}  \frac{dx'}{(|x'|^2+\lmb_q+1)^{2 m}}
\\
=&
(2\pi)^{-n}|\mathds{S}^{n-1}|^2
\int\limits_{0}^{+\infty} \frac{r^{n-1} dr}{(1+r^2)^{4m}}
\sum\limits_{q=1}^{\infty} \
\int\limits_{0}^{+\infty} \frac{s^{n-1}ds}{(s^2+\lmb_q+1)^{2 m}}
\\
=&
(2\pi)^{-n}|\mathds{S}^{n-1}|^2
\int\limits_{0}^{+\infty} \frac{r^{n-1} dr}{(1+r^2)^{4m}}
\int\limits_{0}^{+\infty}
  \frac{t^{n-1}dt}{(t^2+1)^{2m}}
\sum\limits_{q=1}^{\infty} (\lmb_q+1)^{-2m+\tfrac{n}{2}},
\end{align*}
where we have passed to the spherical coordinates and made the rescaling $s=t\sqrt{\lmb_q+1}$
to obtain the last identity. Now we employ estimate (\ref{9.34}) and Theorem~2.11 in \cite[Ch. 2]{Simon-05} and we conclude that
\begin{align*}
\Tr\big( &(\Op_{\cB}+1)^{-m}\cT)^* (\Op_{\cB}+1)^{-m}\cT\big) = \|K\|_{L^2(\Pi\times\Pi)}^2
\\
&=(2\pi)^{-n}|\mathds{S}^{n-2}|^2\int\limits_{0}^{+\infty} \frac{r^{n-1} dr}{(1+r^2)^{2m}}\int\limits_0^{+\infty}  \frac{r^{n-1}dr}{(r^2+1)^{2m}}
\sum\limits_{q=1}^{\infty} (\lmb_q+1)^{-2m+\tfrac{n}{2}}  <\infty \quad\text{for}\quad m > \frac{n}{4}+\frac{1}{2}.
\end{align*}
The obtained estimate and (\ref{9.33}) complete the proof.
\end{proof}

\subsection{Eigenfunction decay inequality (EDI)}

Next we prove (EDI). Let $\psi$ be a generalized eigenfunction of $\Op^\e(\xi)$ associated with some $\l\in\spec(\Op^\e(\xi))$. Then for each $\a\in\G$, $L\geqslant 7$, the restriction of $\psi$ on $\Pi_{\a,L}$ (still denoted by $\psi$) belongs to $W^{2,2}(\Pi_{\a,L})$ and solves the equation
\begin{equation*}
-\D\psi+V_0\psi+ \cL_{\a,L}^\e(\xi) \psi=\l\psi,
\end{equation*}
which is treated as the identity for two functions in $L^2(\Pi_{\a,L})$.
In this subsection we continue using the notation from Section~\ref{ss:SLI}.
The (EDI) condition is established by the next

\begin{lemma}
\label{lm-EDI} Suppose that the operator  $\Op_{\a,L}^\e(\xi)$ is equipped with Dirichlet or Mezincescu conditions (\ref{4.1}) on the lateral boundary $\g_{\a,L}$.
Then, if $\l\in\spec(\Op_{\a,L}^\e(\xi))$, the estimate holds:
\begin{equation*}
\|\chi_U\psi\|_{L^2(\Pi_{\a,L})}
\leqslant
\theta_\cI
%
\big\|\chi_{\a,L}\big(\Op_{\a,L}^\e(\xi) -\l\big)^{-1}\chi_U\big\|_{L^2(\Pi_{\b,\ell})\to L^2(\Ups_{\a,L})} \big\|\chi_{\a,L}\psi\|_{L^2(\Ups_{\a,L})}.
\end{equation*}
\end{lemma}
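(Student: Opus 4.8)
The plan is to reproduce, almost verbatim, the proof of the Simon--Lieb inequality (Lemma~\ref{lm-SLI}), with the second operator $\Op_{\b,\ell}^\e(\xi)$ there replaced by the global generalized eigenfunction $\psi$ and with the cut-offs built around the \emph{large} box $\Pi_{\a,L}$ rather than around $\Pi_{\b,\ell}$. First one disposes of the trivial case: if $\l\in\spec(\Op_{\a,L}^\e(\xi))$ the right-hand side is $+\infty$ and there is nothing to prove, so we may assume $\l\notin\spec(\Op_{\a,L}^\e(\xi))$; since $\l$ lies in the spectrum of the self-adjoint operator $\Op^\e(\xi)$ it is real, and hence $(\Op_{\a,L}^\e(\xi)-\l)^{-1}$ is a bounded self-adjoint operator.

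Next, choose an infinitely differentiable cut-off $\chi_{\mathrm{in}}=\chi_{\mathrm{in}}(x')$ equal to $1$ on $\Pi_{\a+3e_0,L-6}$, equal to $0$ on $\Pi_{\a,L}\setminus\Pi_{\a+e_0,L-2}$, constant on every cell $\square_k$ not contained in $\Ups_{\a,L}$, and obeying the derivative bounds of~(\ref{9.15}); together with a companion cut-off $\chi_{\mathrm{out}}$ supported in $\Ups_{\a,L}$, equal to $1$ on the set where $\chi_{\mathrm{in}}$ is non-constant, and with the same derivative bounds. Since $\Pi_{\b,\ell}$, hence $U=\Pi_{\b+3e_0,\ell-6}$, is contained in $\Pi_{\a+3e_0,L-6}$, we have $\chi_{\mathrm{in}}\equiv1$ on $U$. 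Because $\chi_{\mathrm{in}}$ depends only on $x'$ and vanishes in a neighbourhood of $\g_{\a,L}$, the function $\phi:=\chi_{\mathrm{in}}\psi$ belongs to $\Dom(\Op_{\a,L}^\e(\xi))$, and substituting the equation $-\D\psi+V_0\psi+\cL_{\a,L}^\e(\xi)\psi=\l\psi$ yields, exactly as in~(\ref{9.17}),
\[
 (\Op_{\a,L}^\e(\xi)-\l)\phi=-g,\qquad
 g:=2\nabla\chi_{\mathrm{in}}\cdot\nabla\psi+(\D\chi_{\mathrm{in}})\psi+\big(\chi_{\mathrm{in}}\cL_{\a,L}^\e(\xi)-\cL_{\a,L}^\e(\xi)\chi_{\mathrm{in}}\big)\psi .
\]
As in the proof of Lemma~\ref{lm-SLI}, $g$ is supported in $\Ups_{\a,L}$: the first two terms clearly, and the commutator term because each single-site piece $\cS(k)\cL(\e\xi_k)\cS(-k)$ acts inside $\square_k$ while $\chi_{\mathrm{in}}$ is constant on the cells outside $\Ups_{\a,L}$. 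Hence $g=\chi_{\mathrm{out}}g$, and because $\chi_{\mathrm{in}}\equiv1$ on $U$,
\[
 \chi_U\psi=\chi_U\phi=-\chi_U(\Op_{\a,L}^\e(\xi)-\l)^{-1}\chi_{\mathrm{out}}g .
\]
Using $\|\chi_{\mathrm{out}}\|_\infty\leq1$, $\supp\chi_{\mathrm{out}}\subset\Ups_{\a,L}$, and the self-adjointness of the resolvent, this gives
\[
 \|\chi_U\psi\|_{L^2(\Pi_{\a,L})}\leq\big\|\chi_{\a,L}(\Op_{\a,L}^\e(\xi)-\l)^{-1}\chi_U\big\|_{L^2(\Pi_{\b,\ell})\to L^2(\Ups_{\a,L})}\,\|g\|_{L^2(\Ups_{\a,L})}.
\]

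It remains to prove $\|g\|_{L^2(\Ups_{\a,L})}\leq\theta_\cI\,\|\chi_{\a,L}\psi\|_{L^2(\Ups_{\a,L})}$, and here one repeats the chain~(\ref{9.21})--(\ref{9.31}) literally: bound the three terms of $g$ via the derivative bounds~(\ref{9.15}) and Assumption~\ref{B2} by a constant times $\|\psi\|_{W^{1,2}(\Ups_{\a,L})}$; control the gradient part by testing the equation for $\psi$ against $\chi_{\mathrm{out}}^2\psi$ (an admissible test function, being in $W^{1,2}(\Pi_{\a,L})$ and supported away from $\g_{\a,L}$), integrating by parts — the boundary terms on $\g_{\a,L}$ vanishing by support, those on $\p\Pi\cap\p\Pi_{\a,L}$ by~(\ref{eq:boundary-condition-B}) — then using identity~(\ref{9.27}), Assumption~\ref{B1} with $\vp=\chi_{\mathrm{out}}^2$, and taking real parts; for $\e\leq t_0$ small enough this produces the Caccioppoli bound $\|\chi_{\mathrm{out}}\nabla\psi\|_{L^2(\Ups_{\a,L})}^2\leq 2(|\l|+1+\max|V_0|+2nc_5)\|\psi\|_{L^2(\Ups_{\a,L})}^2$, the analogue of~(\ref{9.30}). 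Combining these bounds exactly as in~(\ref{9.31}) yields the constant $\theta_\cI=(c_6+1)^2\sqrt{2|\l|+c_{11}+4nc_5}+1$ and completes the proof.

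I do not anticipate a genuine obstacle: the argument is a transcription of Lemma~\ref{lm-SLI}, and the only points requiring care are bookkeeping. The crucial choice is to make $\chi_{\mathrm{in}}$ transition precisely inside $\Ups_{\a,L}$ (and not inside a belt of an inner box), so that the source term $g$ is supported in $\Ups_{\a,L}$ and the norm appearing on the right-hand side is the one over $\Ups_{\a,L}$ in the statement. One must also check that the manipulations are legitimate for a merely polynomially bounded generalized eigenfunction, but this is immediate from the facts recorded in the statement — that $\psi|_{\Pi_{\a,L}}\in\H2(\Pi_{\a,L})$ solves the finite-volume equation in $L^2(\Pi_{\a,L})$ and satisfies boundary condition~(\ref{eq:boundary-condition-B}) on $\p\Pi\cap\p\Pi_{\a,L}$ — which is all that is needed both for $\chi_{\mathrm{in}}\psi\in\Dom(\Op_{\a,L}^\e(\xi))$ and for $\chi_{\mathrm{out}}^2\psi$ to be an admissible test function.
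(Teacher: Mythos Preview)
Your proposal is correct and follows essentially the same route as the paper: introduce the cut-offs $\chi_{\mathrm{in}},\chi_{\mathrm{out}}$ with properties~(\ref{9.15}) (now with $\b,\ell$ replaced by $\a,L$), compute $(\Op_{\a,L}^\e(\xi)-\l)(\chi_{\mathrm{in}}\psi)=g$ supported in $\Ups_{\a,L}$, invert, and bound $\|g\|$ by repeating (\ref{9.21})--(\ref{9.31}). One small slip: the identity $g=\chi_{\mathrm{out}}g$ is not guaranteed, because the commutator term in $g$ lives on entire cells of $\Ups_{\a,L}$ while $\chi_{\mathrm{out}}$ is only required to equal $1$ on the set $\{0<\chi_{\mathrm{in}}<1\}$; the paper (and your subsequent estimate) uses the characteristic function $\chi_{\a,L}$ of $\Ups_{\a,L}$ instead, writing $g=\chi_{\a,L}g$, after which the self-adjointness argument you give yields exactly the stated bound.
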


\begin{proof}
We introduce cut-off functions $\chi_{\mathrm{in}}$, $\chi_{\mathrm{out}}$ as in the proof of Lemma~\ref{lm-SLI} with properties (\ref{9.15}), where $\b$, $\ell$ are replaced by $\a$, $L$. Since $\l\notin \spec\big(\Op_{\a,L}^\e(\xi)\big)$ and $\psi\chi_{\mathrm{in}}\in \Dom\big(\Op_{\a,L}^\e(\xi)\big)$, we have the following chain of identities:
\begin{align*}
\chi_U\psi=&\chi_U\chi_{\mathrm{in}}\psi=\chi_U \big(\Op_{\a,L}^\e(\xi)-\l\big)^{-1} \big(\Op_{\a,L}^\e(\xi)-\l\big) \chi_{\mathrm{in}}\psi
\\
=&\chi_U\big(\Op_{\a,L}^\e(\xi)-\l\big)^{-1} g=\chi_U\big(\Op_{\a,L}^\e(\xi)-\l\big)^{-1}\chi_{\a,L}g,
\end{align*}
where $g$ is defined by the formula in (\ref{9.17}) with $u_{\b,\ell}$ replaced by $\psi$. Then we immediately get:
\begin{equation}\label{9.26}
\|\chi_U\psi\|_{L^2(\Pi_{\a,L})} \leqslant \big\|\chi_U\big(\Op_{\a,L}^\e(\xi)-\l\big)^{-1} \chi_{\a,L}\big\|_{L^2(\Ups_{\a,L})\to L^2(\Pi_{\a,L})} \|g\|_{L^2(\Ups_{\a,L})}.
\end{equation}
The norm $\|g\|_{L^2(\Ups_{\a,L})}$  satisfies estimate (\ref{9.21}) with $u_{\b,\ell}$ and $\Ups_{\b,\ell}$ replaced by $\psi$ and $\Ups_{\a,L}$. Estimates (\ref{9.22}), (\ref{9.27}), (\ref{9.28}), (\ref{9.29}), (\ref{9.30}) remain true with $u_{\b,\ell}$ and $\Ups_{\b,\ell}$ replaced by $\psi$ and $\Ups_{\a,L}$. As in (\ref{9.31}), we then obtain
\begin{equation*}
\|g\|_{L^2(\Ups_{\a,L})}\leqslant \theta_\cI
\|\chi_{\a,L}\psi\|_{L^2(\Ups_{\a,L})}.
\end{equation*}
Substituting this inequality into (\ref{9.26}), we complete the proof.
\end{proof}

\subsection{Wegner estimate (W)}
\label{ss-Wegner-for-localization}
We formulate now a version of Theorem~\ref{thm:Wegner} where the disorder parameter $\e> 0$ is re-introduced,
i.e., we replace every $\omega_j$ by $\e \omega_j$.
The following lemma verifies the hypotheses of Theorem~\ref{thm:Wegner} for sufficiently small $t_0$.
\begin{lemma}
 \label{lem:conditions_Wegner}
  There is $\e_{\max} > 0$, depending only on the norms from ${\H2}(\square)$ into $L^2(\square)$ of the operators $\cL_1$, $\cL_2$, $\cL_3(t)$, and $\partial_t \cL_3(t)$, $t \in [-T,T]$,
 Let $t_0$ be sufficiently small,
 depending only on the norms from ${\H2}(\square)$ into $L^2(\square)$ of the operators $\cL_1$, $\cL_2$, $\cL_3(t)$, and 
 Then there exists
 $D \geq 0$  depending only on the norms from ${\H2}(\square)$ into $L^2(\square)$ of the operators $\cL_2$, $\cL_3(t)$, and $\partial_t \cL_3(t)$, $t \in [-T,T]$, such that for all $\e \in (0, t_0]$
 \begin{enumerate}[(i)]
  \item
  we have
  \[
    \lvert \left\langle \phi, \cL_{\a, N}(\omega)^\e \phi \right\rangle \rvert
    \leq
    \frac{1}{4}
    \left\langle \phi, \Op_{\a, N} \phi \right\rangle + \lVert \phi \rVert_{L^2(\Pi_{\a,N})}^2;
  \]
  \item
   for all $E_0 \leq \Lambda_0 - D \e^2$, we have
  \begin{align*}
  \left\langle \phi, \left( \vecfield \cL_{\a,N}^\e(\omega) - \cL_{\a,N}^\e(\omega) \right) \phi \right\rangle
  \leq
  \frac{1}{4}
  \left\langle \phi, (\Op_{\alpha,N} - E_0 ) \phi \right\rangle;
 \end{align*}
  \item
  if $\cL_2 \leq 0$, then for all $E_0 \leq \Lambda_0 - D \e^3$, we have
    \begin{align*}
  \left\langle \phi, \left( \vecfield \cL_{\a,N}^\e(\omega) - \cL_{\a,N}^\e(\omega) \right) \phi \right\rangle
  \leq
  \frac{1}{4}
  \left\langle \phi, (\Op_{\alpha,N} - E_0 ) \phi \right\rangle.
 \end{align*}
 \end{enumerate}
This holds whether $\Op_{\alpha,N} $ is equipped with
Mezincescu or Dirichlet boundary conditions on  $\g_{\a,N}$.
\end{lemma}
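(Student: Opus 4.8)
The plan is to verify the two form bounds required in Theorem~\ref{thm:Wegner}, cell by cell. The first step is to make the action of $\vecfield$ explicit. Since $\cL(t)=t\cL_1+t^2\cL_2+t^3\cL_3(t)$ and $\cL_{\a,N}(\omega)^\e=\sum_{k\in\G_{\a,N}}\cS(k)\cL(\e\omega_k)\cS(-k)$, with $\omega_j$ entering only the $j$-th summand, the elementary identity $\big(t\tfrac{\drm}{\drm t}-1\big)\cL(t)=t^2\cL_2+2t^3\cL_3(t)+t^4\cL_3'(t)$ evaluated at $t=\e\omega_j$ yields
\[
 \vecfield\cL_{\a,N}^\e(\omega)-\cL_{\a,N}^\e(\omega)
 =\sum_{k\in\G_{\a,N}}\cS(k)\big(\e^2\omega_k^2\cL_2+2\e^3\omega_k^3\cL_3(\e\omega_k)+\e^4\omega_k^4\cL_3'(\e\omega_k)\big)\cS(-k).
\]
The decisive structural point is that the $\cL_1$-term is annihilated: the linear part of $\cL(t)$ is homogeneous of degree one and is therefore killed by $t\tfrac{\drm}{\drm t}-1$. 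This is exactly why the constant $D$ below will depend only on the norms of $\cL_2$, $\cL_3(\cdot)$, $\partial_t\cL_3(\cdot)$ and not on $\cL_1$. Throughout one normalizes, as in the proof of Theorem~\ref{thm:Wegner}, by adding a constant to $V_0$ so that $\Lambda_0=0$ and hence $\Op_{\a,N}\ge0$; this changes no conclusion.

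For (i) I would write $\langle\phi,\cL_{\a,N}^\e(\omega)\phi\rangle=\sum_k\big(\e\omega_k\langle\tilde\phi_k,\cL_1\tilde\phi_k\rangle+\e^2\omega_k^2\langle\tilde\phi_k,\cL_2\tilde\phi_k\rangle+\e^3\omega_k^3\langle\tilde\phi_k,\cL_3(\e\omega_k)\tilde\phi_k\rangle\big)$ with $\tilde\phi_k:=\cS(-k)\phi|_\square$, apply Assumption~\ref{B1} with $\vp\equiv1$ on each cell, use $|\e\omega_k|^j\le\e$ for $j=1,2,3$, and sum over the disjoint cells $\square_k$, $k\in\G_{\a,N}$, to get $|\langle\phi,\cL_{\a,N}^\e(\omega)\phi\rangle|\le 3\e\big(c_6\|\nabla\phi\|_{L^2(\Pi_{\a,N})}^2+c_7\|\phi\|_{L^2(\Pi_{\a,N})}^2\big)$. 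To convert $\|\nabla\phi\|^2$ into the form of $\Op_{\a,N}$ one uses $\|\nabla\phi\|^2=\langle\phi,\Op_{\a,N}\phi\rangle-\langle V_0\phi,\phi\rangle+\langle\rho^\e\phi,\phi\rangle_{L^2(\g_{\a,N})}$ (the boundary term being absent for Dirichlet conditions), together with $\|\rho^\e\|_{L^\infty(\g)}=O(\e)$ from Assumption~\ref{A1} and a cellwise trace inequality $\|\phi\|_{L^2(\g_{\a,N})}^2\le\|\nabla\phi\|_{L^2(\Pi_{\a,N})}^2+C\|\phi\|_{L^2(\Pi_{\a,N})}^2$ which is uniform in $N,\a$ because all cells are translates of one fixed cell. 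For $\e$ small this gives $\|\nabla\phi\|^2\le2\langle\phi,\Op_{\a,N}\phi\rangle+C_{V_0}\|\phi\|^2$. Feeding this back, shrinking $t_0$ so that $6\e c_6\le\tfrac14$ and that the leftover $\|\phi\|^2$-coefficient is $\le1$, and invoking $\Op_{\a,N}\ge0$, establishes (i).

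For (ii) I would estimate $\langle\phi,(\vecfield\cL_{\a,N}^\e(\omega)-\cL_{\a,N}^\e(\omega))\phi\rangle$ by the same cellwise device: the $\cL_2$- and $\cL_3(\e\omega_k)$-contributions via Assumption~\ref{B1}, and the $\e^4\omega_k^4\cL_3'(\e\omega_k)$-contribution via the uniform relative form bound for $\partial_t\cL_3(t)$ (cf.~\eqref{2.1a}). Since every coefficient is $\le\e^2$, this produces $|\langle\phi,(\vecfield\cL_{\a,N}^\e-\cL_{\a,N}^\e)\phi\rangle|\le C'\e^2\big(\langle\phi,\Op_{\a,N}\phi\rangle+\|\phi\|^2\big)$ with $C'$ depending only on the norms of $\cL_2,\cL_3(\cdot),\partial_t\cL_3(\cdot)$ (and on $V_0$ through the reference operator). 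Now $E_0\le\Lambda_0-D\e^2$ forces $\langle\phi,(\Op_{\a,N}-E_0)\phi\rangle\ge(\Lambda_0-E_0)\|\phi\|^2\ge D\e^2\|\phi\|^2$, while $\Lambda_0=0$ gives $0\le\langle\phi,\Op_{\a,N}\phi\rangle\le\langle\phi,(\Op_{\a,N}-E_0)\phi\rangle$; hence $|\langle\phi,(\vecfield\cL_{\a,N}^\e-\cL_{\a,N}^\e)\phi\rangle|\le\big(C'\e^2+\tfrac{C'}{D}\big)\langle\phi,(\Op_{\a,N}-E_0)\phi\rangle$, and choosing $t_0$ small and $D$ a large enough multiple of $C'$ proves (ii). For (iii), if $\cL_2\le0$ then each $\e^2\omega_k^2\,\cS(k)\cL_2\cS(-k)$ is $\le0$ as a form (because $\omega_k^2\ge0$), so in the one-sided bound this term is simply dropped, leaving only coefficients that are $\le\e^3$; one gets $\langle\phi,(\vecfield\cL_{\a,N}^\e-\cL_{\a,N}^\e)\phi\rangle\le C''\e^3(\langle\phi,\Op_{\a,N}\phi\rangle+\|\phi\|^2)$ with $C''$ depending only on $\cL_3(\cdot),\partial_t\cL_3(\cdot)$, and combining with $E_0\le\Lambda_0-D\e^3$ as above gives (iii) once $D\ge\max\{8C',8C''\}$. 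The thresholds $\e_{\max}$ and $t_0$ are whatever makes all smallness steps valid — relative bound strictly below one, $6\e c_6\le\tfrac14$, the trace absorption, $D\e^2\le1$, $D\e^3\le1$ — and depend only on the listed data.

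The step I expect to require most care is treating the $\partial_t\cL_3$-term as a \emph{form} on $\Dom(h_{\e,\xi,\a,N})\subset W^{1,2}(\Pi_{\a,N})$: Assumption~\ref{B1} as stated bounds $\cL_1,\cL_2,\cL_3(\cdot)$ but not $\partial_t\cL_3(\cdot)$, so one should first prove the inequalities on the operator core $\Dom(\Op_{\a,N})$, where each $\tilde\phi_k$ lies in $W^{2,2}(\square)$ and $\langle\tilde\phi_k,\cL_3'(\e\omega_k)\tilde\phi_k\rangle$ is defined and bounded by $\|\cL_3'(\e\omega_k)\tilde\phi_k\|_{L^2(\square)}\|\tilde\phi_k\|_{L^2(\square)}\le C\|\tilde\phi_k\|_{W^{2,2}(\square)}\|\tilde\phi_k\|_{L^2(\square)}$, which in all the concrete classes of Section~\ref{s:examples} reorganizes into a genuine relative form bound of the Assumption~\ref{B1} type, and then pass to the full form domain by density. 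The remaining work is bookkeeping: tracking the dependencies of $t_0,\e_{\max},D$ and making sure, via the cancellation observed at the outset, that $D$ is blind to $\cL_1$.
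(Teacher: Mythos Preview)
Your proof is correct and follows essentially the same route as the paper: compute $\big(t\tfrac{\drm}{\drm t}-1\big)\cL(t)$ to see that the $\cL_1$-term cancels, obtain a relative form bound of order $\e^2$ (respectively $\e^3$ when $\cL_2\le0$), and absorb the $\|\phi\|^2$-term using $\Lambda_0-E_0\ge D\e^2$ (respectively $D\e^3$). The one stylistic difference is that the paper bypasses Assumption~\ref{B1} entirely: it uses only the assumed $W^{2,2}(\square)\to L^2(\square)$ boundedness of $\cL_2,\cL_3(\cdot),\partial_t\cL_3(\cdot)$ to get relative \emph{operator} boundedness with respect to $\Op_{\a,N}-E_0$, then invokes the general fact (e.g.\ \cite[Thm.~X.18]{ReedS-75}) that relative operator boundedness implies relative form boundedness. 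This sidesteps exactly the concern you flag at the end---Assumption~\ref{B1} does not cover $\partial_t\cL_3$---and explains why the lemma statement records only the $W^{2,2}\to L^2$ norms, not the constants $c_6,c_7$. Your route via Assumption~\ref{B1} plus an ad hoc treatment of $\cL_3'$ on the operator core also works, but the paper's is a bit cleaner and more self-contained.
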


\begin{proof}
Statement (i) follows from the fact that $\cL^\e(t) = \e(\cL_1 + \e \cL_2 + \e^2 \cL_3(\e t))$ is relatively bounded, uniformly with a bound of order $\e$.

To see (ii), we note that by the definition of $\cL(t)$ and an elementary calculation we have
 \[
  t \frac{\partial}{\partial t} \cL^\e(t) - \cL^\e(t)
  =
  \e^2 t^2 \cL_2 + 2 \e^3 t^3 \cL_3(\e t) + \e^4 t^4 \cL_3'(\e t)
  =
  \e^2
  (
  t^2 \cL_2 + 2 \e t^3 \cL_3(\e t) + \e^2 t^4 \cL_3'(\e t)
  )
  .
 \]
 The operators $t^2\cL_2$, $\e t^3 \cL_3(\e t)$ and $\e^2 t^4 \cL_3'(\e t)$ are bounded from $\H2(\square)$ into $L^2(\square)$ uniformly for $t \in [b,1]$ and $\e \in (0,1]$.
 This implies in particular that they are relatively bounded with respect to $\Op_\square^0 - E_0$.
 Therefore, $\vecfield \cL_{\a,N}^\e(\omega) - \cL_{\a,N}^\e(\omega)$ is also relatively bounded with respect to the positive definite operator $\Op_{\a,N} - E_0$  uniformly for $\omega \in \Om$ and $\e \in (0,1]$ with relative bound proportional to $\e^2$.
 Relative boundedness implies relative form boundedness, cf.~\cite[Thm. X.18]{ReedS-75}, thus, there are $D_a,D_b > 0$  independent of $\e \in (0,1]$ such that for all $\phi \in \Dom\big(h_{\e,\xi,\a, N}\big)$ we have
 \begin{align*}
  \left\langle \phi, ( \vecfield \cL_{\a,N}^\e(\omega) - \cL_{\a,N}^\e(\omega) ) \phi \right\rangle
  &\leq
  \e^2\
  \left(
  D_a
  \left\langle \phi, (\Op_{\a,N} - E_0)   \phi \right\rangle
  +
  D_b   \lVert \phi \rVert^2
  \right)
 \end{align*}
 Note that for both Mezincescu and Dirichlet boundary conditions
 $\Op_{\a,N} \geq \Sigma_0$. Choosing $D := 8 D_b$,
 we find for $E_0 \leq \Lambda_0 - D \e^2$
 \[
  \e^2 D_b \lVert \phi \rVert^2
  \leq
  \frac{\e^2 D_b }{\Lambda_0 - E_0} \left\langle \phi, (\Op_{\a,N} - E_0) \phi \right\rangle
  \leq
  \frac{\e^2 D_b}{D \e^2} \left\langle \phi, (\Op_{\a,N} - E_0) \phi \right\rangle
  =
  \frac{1}{8} \langle \phi, (\Op_{\a,N} - E_0) \phi \rangle
 \]
 and obtain
 \[
  \left\langle \phi, ( \vecfield \cL_{\a,N}^\e(\omega) - \cL_{\a,N}^\e(\omega) ) \phi \right\rangle
  \leq
  \left(
    D_a \e^2
    +
    \frac{1}{8}
  \right)
  \langle \phi, (\Op_{\a,N} - E_0) \phi \rangle.
 \]
For sufficiently small $\e >0$ we have $\e^2 D_a \leq 1/8$ and conclude the statement (i).

For for non-positive $\cL_2$ as in claim (iii) we have
 \[
  t \frac{\partial}{\partial t} \cL^\e(t) - \cL^\e(t)
  \leq
  \e^3
  ( 2 t^3 \cL_3(\e t) + \e t^4 \cL_3'(\e t)  )
\]
We find for $E_0 \leq \Lambda_0 - 8 D_b \e^3$ and sufficiently small $\e >0$ again
 \[
  \left\langle \phi, ( \vecfield \cL_{\a,N}^\e(\omega) - \cL_{\a,N}^\e(\omega) ) \phi \right\rangle
 \leq
  \left(    D_a \e^3  +\frac{1}{8}\right)   \langle \phi, (\Op_{\a,N} - E_0) \phi \rangle
 \leq
    \frac{1}{4}     \langle \phi, (\Op_{\a,N} - E_0) \phi \rangle.
 \]
\end{proof}

Combining Lemma~\ref{lem:conditions_Wegner} and Theorem~\ref{thm:Wegner}, we find the following Wegner estimate:

\begin{theorem}
\label{thm:Wegner_with_epsilon}
  Assume that $t_0 > 0$ is sufficiently small.
  Then, there exists $D > 0$ (carrying the same dependencies as in Lemma~\ref{lem:conditions_Wegner}), $C_{n,h_0}$ depending exclusively on $n$ and $h_0$, as well as $C_{n,V_0}$ depending merely on $n$ and $V_0$,
  such that for all $\e \in (0, t_0]$, all $\alpha \in \Gamma$, and all $N \in \NN$
  the following hold:
\begin{enumerate}[(i)]
  \item
  For all $E \leq \Lambda_0 - D \e^2$ and  all $\kappa \leq D \e^2/ 4$ we have
  \begin{align*}
    \PP ( \dist(\sigma(\Op_{\a,N}^\e(\omega)), E) \leq \kappa )
    \leq
    \frac{C_{n,h_0}}{D \e^2}  [1+C_{n,V_0}|\square| N^n]\cdot \kappa \, N^{n}.
  \end{align*}
  \item
  Assume that $\cL_2 \leq 0$.
  Then for all $E \leq \Lambda_0 - D \e^3$ and all $\kappa \leq D \e^3/ 4$,  we have
  \begin{equation*}
    \PP ( \dist(\sigma(\Op_{\a,N}^\e(\omega)), E) \leq \kappa )
    \leq
    \frac{C_{n,h_0}}{D \e^3}  [1+C_{n,V_0}|\square| N^n]\cdot \kappa \, N^{n}.
  \end{equation*}
 \end{enumerate}
\end{theorem}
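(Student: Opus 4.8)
The plan is to obtain Theorem~\ref{thm:Wegner_with_epsilon} as a direct consequence of the abstract Wegner estimate of Theorem~\ref{thm:Wegner}, together with the verification of its hypotheses carried out in Lemma~\ref{lem:conditions_Wegner}. First I would fix $t_0>0$ small enough that all three conclusions of Lemma~\ref{lem:conditions_Wegner} are available simultaneously, and let $D>0$ be the constant produced there; recall that $D$ depends only on the ${\H2}(\square)\to L^2(\square)$ norms of $\cL_2$, $\cL_3(t)$ and $\partial_t\cL_3(t)$ for $t\in[-T,T]$, i.e.\ only on the operators $\cL(t)$. Throughout, $\e\in(0,t_0]\subset(0,1]$, so the $\e$-uniform relative bounds of Lemma~\ref{lem:conditions_Wegner} apply.

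For part (i), I would set $E_0:=\Lambda_0-D\e^2$, which is strictly below $\Lambda_0$ since $D\e^2>0$. Lemma~\ref{lem:conditions_Wegner}(i) supplies \eqref{eq:relatively_form_bounded_Weyl} with $C_b=1$, while Lemma~\ref{lem:conditions_Wegner}(ii) supplies \eqref{eq:relatively_form_bounded_s} for exactly this value of $E_0$, for every $\omega\in\Om$, $\a\in\G$, $N\in\NN$, and every $\phi\in\Dom(h_{\e,\xi,\a,N})$. Thus the two hypotheses of Theorem~\ref{thm:Wegner} hold with $E_0=\Lambda_0-D\e^2$ and $C_b=1$, and Theorem~\ref{thm:Wegner} yields, for all $E\le E_0=\Lambda_0-D\e^2$, all $\kappa\le(\Lambda_0-E_0)/4=D\e^2/4$, all $\a\in\G$ and all $N\in\NN$,
\[
  \PP\big(\dist(\spec(\Op_{\a,N}^\e(\omega)),E)\le\kappa\big)
  \le
  24\,C_{\weyl}(n)\,\frac{\|h_0\|_{W^{1,1}(\RR)}}{D\e^2}\,
  \big[1+(4+\|V_0\|_\infty)^{\frac{n+1}{2}}|\square|N^n\big]\,\kappa\,N^n.
\]
Reading off $C_{n,h_0}:=24\,C_{\weyl}(n)\,\|h_0\|_{W^{1,1}(\RR)}$, which depends only on $n$ and $h_0$, and $C_{n,V_0}:=(4+\|V_0\|_\infty)^{\frac{n+1}{2}}$, which depends only on $n$ and $V_0$, gives precisely the bound claimed in (i).

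For part (ii), under the additional assumption $\cL_2\le0$, I would run the same argument with $E_0:=\Lambda_0-D\e^3$, now using Lemma~\ref{lem:conditions_Wegner}(iii) in place of (ii) to verify \eqref{eq:relatively_form_bounded_s}; \eqref{eq:relatively_form_bounded_Weyl} with $C_b=1$ is unaffected. Theorem~\ref{thm:Wegner} then gives the same estimate with $D\e^2$ replaced by $D\e^3$ in the denominator and in the admissible ranges $E\le\Lambda_0-D\e^3$, $\kappa\le D\e^3/4$, which is exactly (ii). Since Theorem~\ref{thm:Wegner} and all inputs from Lemma~\ref{lem:conditions_Wegner} are stated for both Mezincescu and Dirichlet boundary conditions on $\g_{\a,N}$, the conclusion holds for both. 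There is no genuine analytic obstacle here: the only point needing care is the bookkeeping of constants — one must choose the single $t_0$ so that the smallness conditions on $\e$ appearing inside the proof of Lemma~\ref{lem:conditions_Wegner} (ensuring relative bound $\le1/4$) all hold, and check that the admissible $(E,\kappa)$-region output by Theorem~\ref{thm:Wegner} matches the one advertised in Theorem~\ref{thm:Wegner_with_epsilon}.
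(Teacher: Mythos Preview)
Your proposal is correct and follows exactly the paper's approach: the paper's entire proof is the single sentence ``Combining Lemma~\ref{lem:conditions_Wegner} and Theorem~\ref{thm:Wegner}, we find the following Wegner estimate,'' and you have simply spelled out the bookkeeping (choosing $E_0=\Lambda_0-D\e^2$ resp.\ $\Lambda_0-D\e^3$, reading off $C_b=1$, and matching the constants $C_{n,h_0}$, $C_{n,V_0}$ from the bound in~\eqref{eq:Wegner}).
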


We note that the constant in the Wegner estimate is
\[
 Q_\cI
 =
 \frac{C_{n,h_0}}{D}  [1+C_{n,V_0}|\square| N^n]
 \cdot
 \begin{cases}
  \e^{-2}
  &
  \text{in~\eqref{C1}},\\
  \e^{-3}
  &
  \text{in~\eqref{C2}}.
 \end{cases}
\]
In particular, we conclude that the constant $D_\cI$ in Theorem~\ref{thm:ISE_GerminetK} is of order $\e^{-2}$ or $\e^{-3}$, respectively.

\subsection{Initial scale estimate~\eqref{eq:ISE_GerminetK}}

The initial scale estimates in Theorem~\ref{thm:ISE} hold for ranges of $\e$ which~\textsl{depend on the scale $N$}.
We aim for a localization statement for \textsl{all sufficiently small $\e$} whence we shall choose a sufficiently small $t_0$ and for every $\e \in (0, t_0]$ a corresponding $\e$-dependent initial scale $N_0(\e)$.

We shall first discuss~\eqref{C1}.
Choosing $\tau = 5$ in Thm.~\ref{thm:ISE} the initial scale estimate thereof holds for all
\[
  \e
  \in
  J_N
  =
  \left[
  \frac{8}{\sqrt{|\L_1|\mathbb{E}(|\om_0|)}}
  \frac{1}{N^{\frac{1}{2}}},
  \frac{c_0}{N^{\frac{2}{5}}}
  \right]
  =:
  \left[
  \frac{\tilde c_1}{{N^{\frac{1}{2}}}},
  \frac{c_0}{N^{\frac{2}{5}}}
  \right].
\]
Let us now additionally require that
\[
  t_0
  \leq
  \min
  \left\{
  c_0^5 /(2 \tilde c_1)^4,
  \tilde c_1/ \sqrt{2}
  \right\}
\]
For $\e \in (0, t_0]$, define then
\[
 N_0:=N_0(\e)
 :=
  \left\lceil
  \left( \frac{\tilde c_1}{\e} \right)^2
  \right\rceil_6
\]
where $\lceil x \rceil_6$ denotes the least multiple of six larger or equal than $x$.
With this choice, one can check that every $\e \in (0, t_0]$ satisfies $\e \in J_{N_0}$.

Now, if $t_0$ is furthermore chosen so small that for all $\e \in (0, t_0]$ we have
\[
  N_0
  \geq
    N_1^5
    +
    \left( \frac{\tilde c_1}{c_0} \right)^{10}
   =
    N_1^\tau
    +
    K_1^\tau
   \geq
    \max \{ N_1^\tau, K_1^\tau \}
\]
where $N_1$ and $K_1$ are the parameters from Theorem~\ref{thm:ISE}, then this theorem and \eqref{2.20} imply
\[
 \mathbb{P}
  \left(
    \forall \l\leqslant \L^\e + \frac{1}{4} \frac{\e}{\tilde c_1}:\,
      \| \chi_{B_1}(\Op_{\a,N_0}^\e(\xi)-\l)^{-1}\chi_{B_2} \|
      \leqslant
      2\sqrt{N_0}
      e^{-c_2 \frac{\dist(B_1,B_2)}{\sqrt{N_0}}}
  \right)
\geqslant
  1 - N_0^{n \frac{4}{5}}
  e^{-c_1 N_0^{\frac{n}{5}}}.
\]
(Recall that $\min\Sigma_\e=\L^\e$.)
We now choose
\[
 B_1 = \Pi_{\a + \tfrac{1}{2} N_0 e_0, \tfrac{1}{3} N_0}
 \quad
 \text{and}
 \quad
 B_2 = \Pi_{\beta,1}
\]
where
$\beta$ is a lattice point in $\Upsilon_{\a, N_0}$.
Assuming $N_0 \in 6\NN$, $N_0 \geq 12$, which can be ensured by the choice of $t_0$, we have in particular $\dist(B_1, B_2) \geq C_\Gamma N_0$ for a constant $C_\Gamma$ only depending on the lattice $\Gamma$.
Thus
$
 \frac{\dist(B_1,B_2)}{\sqrt{N_0}}  \geq C_\Gamma \sqrt{N_0}
$.
Since
\begin{align*}
 &\Big\lVert \chi_{\Upsilon_{\a, N_0}} (\Op_{\a,N_0}^\e(\xi)-\l)^{-1} \chi_{\Pi_{\a + {\tfrac{1}{2}N_0 e_0,\tfrac{1}{3}N_0}}} \Big\rVert
 \leq
 &\sum_{\substack{\beta \in \Gamma \cap \Upsilon_{\a,N_0}}}
 \Big\lVert \chi_{\Pi_{\beta,1}} (\Op_{\a,N_0}^\e(\xi)-\l)^{-1}
 \chi_{\Pi_{\a + \tfrac{1}{2}N_0 e_0, \tfrac{1}{3}N_0}} \Big\rVert
\end{align*}
and the number of lattice points in $\Upsilon_{\a, N_0}$ can be bounded by $4 n N_0^{n-1}$,
we have by a union bound
\begin{align*}
  \mathbb{P} &
  \left(
    \forall \l\leqslant \L^\e + \frac{1}{4} \frac{\e}{\tilde c_1}:\,
      \| \chi_{B_1}(\Op_{\a,N_0}^\e(\xi)-\l)^{-1}\chi_{\Upsilon_{\a, N_0}} \|
      \leqslant
      2\sqrt{N_0}
      e^{-c_2 \frac{\dist(B_1,B_2)}{\sqrt{N_0}}}
  \right)
  \\
&\hphantom{\bigg(\forall \l }  \geq
  1  -   4 n
  N_0^{n-1}       N_0^{n \frac{4}{5}}     e^{-c_1 N_0^{\frac{n}{5}}}
  = 1   -   4 n N_0^{\frac{9 n}{5} - 1}    e^{-c_1  N_0^{\frac{n}{5}}}
\end{align*}
In order to obtain inequality~\eqref{eq:ISE_GerminetK} in Theorem~\ref{thm:ISE_GerminetK}
it suffices to ensure that
\[
 e^{c_2 C_\Gamma \sqrt N_0}  >
 2 D_\cI N_0^{\frac{25 n}{6}}
  \quad
  \text{and}
  \quad
 4 n N_0^{\frac{9n}{5} - 1}   < \frac{2}{368^n}  e^{c_1 N_0^{\frac{n}{5}}}.
\]
Since $N_0 \sim \e^{-2}$ and $D_\cI \sim \e^{-2}$,  this is true for all $\e \in (0, t_0]$ as soon as $t_0$ is chosen sufficiently small, depending on $n$, $h_0$, $V_0$, $\lvert \square' \rvert$, $\cL(\cdot)$, $\Lambda_1$, $\EE \lvert \omega_0 \rvert$, $\theta_\cI$.
Thus, in~\eqref{C1}, we obtain Anderson localization for a sufficiently small $t_0 > 0$ and for every $\e \in (0, t_0]$ in the energy region
\begin{equation*}
 \cI_\e
 :=
 \left( - \infty, \Lambda_0 - D \e^2 \right]
 \cap
 \left( - \infty, \L^\e + \frac{\e}{4 \tilde c_1} \right]
 \cap \Sigma_\e.
\end{equation*}
By Corollary \ref{c:expansion} $\L^\e \leq \Lambda_0 + \tfrac{\L_1}{2} \e$
and  for sufficiently small $t_0$ we have the equality of sets
$ \cI_\e
  =
 \left[\L^\e, \min\{\Lambda_0 - D \e^2, \L^\e +\frac{\e}{4 \tilde c_1} \}\right]
 \cap \Sigma_\e $.
We see that
\[
\cI_\e \supset
 \left[\L^\e, \min\{\L^\e +\tfrac{|\L_1|}{2} \e - D \e^2, \L^\e +\frac{\e}{4 \tilde c_1} \}\right]
 \cap \Sigma_\e\ni \L^\e.
 \]
Thus,
there exists $C > 0$ such that for all $\e\in(0,t_0]$ the set
$\Sigma_\e\cap[\L^\e,\L^\e+C \e]$ is almost surely non empty and exhibits dynamical localization,
which proves Theorem~\ref{thm:localization_C1}.

In~\eqref{C2}, we proceed analogously. We set $\tau = 17$ in Theorem~\ref{thm:ISE}, require
\[
  t_0
  \leq
  \min
  \left\{
  c_0^{17} / (2 \tilde c_2)^{16},
  (5/2)^{1/4} \tilde c_2
  \right\},
  \quad
  \text{where}
  \quad
  \tilde c_2
  =
  \frac{\sqrt{2}}{\sqrt{\eta \EE ( \lvert \omega_0 \rvert)}}
\]
and define
\[
 N_0:=N_0(\e)  :=
 \left\lceil
  \left( \frac{\tilde c_2}{\e} \right)^4
 \right\rceil_6
\]
By analogous calculations as in~\eqref{C1}, we find Anderson localization
for every $\e \in (0, t_0]$ in the energy interval
\[
 \cI_e
 :=
 \left(- \infty, \L_0 - D \e^3\right]
 \cap
 \left(- \infty, \L^\e + \frac{\e^2}{8 \tilde c_2^2} \right]   \cap \Sigma_\e
\]
Note that $\L_0 - D \e^3 \geq \L^\e + \frac{|\L_2|}2\e^2 -D\e^3$. Thus,
there exists a constant $C>0$ such that for all $\e \in(0,t_0]$
\[
\cI_\e\supset \left[ \L^\e, \L^\e + C \e^2\right] \cap \Sigma_\e
\]
and the latter set is non-empty and exhibits dynamical localization, almost surely. This completes the proof of Theorem~\ref{thm:localization_C2}.

\appendix

\section{Random magnetic field with non-zero electric potential}
\label{appendix:V_0}

\begin{remark}
We show here that for a random magnetic field as in Section~\ref{ss:simple-magnetic}, an arbitrary measurable and bounded $V_0$ and no random electric potential (i.e. $W_1 = W_2 = 0$ in the notation of Section~\ref{ss:simple-magnetic}) we have $\L_1 = 0$ and $\L_2 \geq 0$, i.e. we are neither in~\eqref{C1} nor~\eqref{C2}.
For that purpose, we recall parts of the calculation in~\cite[Sec.~3.3]{BorisovGV-16} and study the effect of adding a non-zero background potential $V_0$.
Here we also assume that the considered magnetic field is non-trivial in the sense that it can not be removed by an appropriate gauge transformation. As it is known, this is equivalent to assuming that the magnetic potential $A$ is not a gradient of some scalar function.

 \par
 Recall that $\L_0$ is the smallest eigenvalue of the operator
\begin{equation*}
-\frac{d^2}{dx_{n+1}^2}+V_0\quad \text{on}\quad (0,d)
\end{equation*}
subject to Dirichlet or Neumann boundary condition and
$\Psi_0=\Psi_0(x_{n+1})$ is the associated positive eigenfunction, extended to $\square$ by $\Psi_0(x',x_{n+1})=\Psi_0(x_{n+1})$, and normalized appropriately.
We then have
\begin{equation*}
\L_1 = (\cL_1\Psi_0,\Psi_0)_{L^2(\square)},
\qquad
\L_2 = (\cL_2\Psi_0,\Psi_0)_{L^2(\square)}  + (\Psi_1,\cL_1\Psi_0)_{L^2(\square)},
\end{equation*}
where $\Psi_1$ is the unique solution to
\begin{equation*}
(\Op^0_\square-\L_0)\Psi_1=-\cL_1\Psi_0 + \L_1\Psi_0, \qquad (\Psi_1,\Psi_0)_{L^2(\square)}=0.
\end{equation*}
As in Section~\ref{ss:simple-magnetic}, the operators $\cL_1$ and $\cL_2$ can be written as
\begin{equation} \label{eq:A-L1_L2}
 \cL_1
 =
 \iu [\nabla \cdot A + A \cdot \nabla]
 =
 2 \iu A \cdot \nabla + \iu \operatorname{div}A,
 \quad
 \cL_2
 =
 \lvert A \rvert^2.
\end{equation}
We observe that the formula for $\cL_1$ implies in particular that the function $\Psi_1$ is pure imaginary.

\par
Since $\Lambda_0$ is a ground state of a $1$-dimensional Schr\"odinger equation, it is non-degenerate and the corresponding eigenfunction $\Psi_0$ is up to a phase real-valued (real part and imaginary part are linearly dependent because else they would yield two linearly independent ground states).
Thus, without loss of generality, we assume that $\Psi_0$ is real-valued 
and since $A$ is also real-valued, we can calculate by employing integration by parts:
\begin{equation}\label{MFL1}
 \L_1
 =
 \int\limits_{\square}  \iu \Psi_0   \left[ \nabla \cdot A + A \cdot \nabla \right] \Psi_0 \drm x
 =
 \iu\int\limits_{\square} \big((A \Psi_0) \cdot  \nabla \Psi_0 - \nabla \Psi_0 \cdot (A \Psi_0)\big) \drm x=0.
\end{equation}
Hence, also for non-zero $V_0$, random perturbations consisting purely of magnetic fields always imply $\L_1 = 0$.
Thus, we are not in~\eqref{C1}
\par

Worse, we also have $\L_2 > 0$, such that we are not even in~\eqref{C2}.
To see this, let us first note that
\[
 \Lambda_2
 =
 ( \cL_2 \Psi_0, \Psi_0)_{L^2(\square)} + (\Psi_1, \cL_1 \Psi_0)_{L^2(\square)}
 =
 \int_\square \lvert A \rvert^2 \cdot \lvert \Psi_0 \rvert^2 - 2 \iu \Psi_1 A \cdot \nabla \Psi_0 - \iu \Psi_0 \Psi_1 \operatorname{div} A \drm x.
\]
Now, to see that $\Lambda_2 > 0$ it suffices to establish
\begin{equation}\label{A.1}
 \Lambda_2
 =
 \int_{\square}
 \lvert \Psi_0 \rvert^2 \left\lvert A + \iu \nabla \frac{\Psi_1}{\Psi_0} \right\rvert^2 \drm x.
\end{equation}
Indeed, the right hand side is obviously non-negative and the sum $A + \iu \nabla \frac{\Psi_1}{\Psi_0}$ cannot vanish since otherwise $A$ would be the gradient of a real-valued scalar function because $\Psi_1$ is purely imaginary.

Identity (\ref{A.1}) is proved by calculations which are explicitly performed in~\cite[Section~3.3]{BorisovGV-16}  using in particular that
\[
 \Psi_0 \nabla \frac{\Psi_1}{\Psi_0} = \nabla \Psi_1 - \frac{\Psi_1}{\Psi_0} \nabla \Psi_0
\]
Note that since $\Psi_0$ is a ground state, it is bounded away from zero by Harnack's inequality and we can divide by $\Psi_0$ without any trouble.
\end{remark}

\section{Chasing the Wegner estimate: An example}
\label{ap:CaseIII}

\begin{remark}
  In~\cite{HislopK-02}, operators with random magnetic fields are studied.
  In particular, Section~6 of~\cite{HislopK-02} treats random operators of the form
  \begin{equation}\label{apB1}
   \Op^\e(\omega)
   =
   \big(\iu \nabla + A_0 + A^\e(\omega)\big)^2 + V_0,\qquad A^\e(\omega):=\e\sum\limits_{k\in\G} \om_k A(x'-k,x_n),
  \end{equation}
with a random magnetic field $A^\e(\omega)$, a deterministic magnetic field $A_0$ and  a deterministic electric potential $V_0$.
We assume that
$A_0\colon \overline{\Pi} \to \RR^{n+1}$,  $A\colon \overline{\Pi} \to \RR^{n+1}$ and $V_0\colon\overline{\Pi}\to \mathds{R}$
 are  $\square$-periodic, the potential $A$
vanishes on the boundary of $\square$,
and all these functions are twice continuously differentiable.

 Note that in this case, the zero disorder limit of the random operator is the magnetic Schr\"odinger operator $(\iu \nabla + A_0)^2 + V_0$.
 This is a more general situation than considered in the main body of this paper,
 where the corresponding limit operator is $- \Delta + V_0$.
 In~\cite[Theorem~6.1.(a)]{HislopK-02}, a Wegner estimate is proved, however only in an energy region \textsl{strictly below the infimum of the spectrum of the unperturbed operator} and at small disorder.

  In such a situation it is crucial to investigate whether at small disorder there is any spectrum in the region where the Wegner estimate can be proven. Else it would concern the resolvent set and would be a trivial statement.
 Unfortunately from the discussion in Appendix~\ref{appendix:V_0} it follows that at least in the special case where $A_0 = 0$, we have $\Lambda_1 = 0$ and $\Lambda_2 > 0$.
 Thus, in this special case, at small disorder there is no spectrum at all below the infimum of the unperturbed operator.

 Furthermore, even if the spectrum expanded below the infimum of the deterministic operator, another issue would arise:

More precisely, Theorem~6.1.(a) of~\cite{HislopK-02} states the following:
 Fix parameters $E_0 < \min  \Sigma_0:=\min\sigma(\Op^0)$ and $\eta \in (0, \eta_{\sup})$,
 where $\eta_{\sup} = \dist(E_0, \Sigma_0)/2$ and
 $\Op^0=(i \nabla + A_0)^2 + V_0$ denotes the unperturbed operator.
 Then there exists $\e_0 > 0$ (depending on $E_0$ and $\eta$)
 such that for all disorder strengths\footnote{In~\cite[Theorem~6.1 (a)]{HislopK-02}, the disorder is denoted by $\lambda$ and the maximal disorder strength by $\lambda_0$. We call them $\epsilon$ and $\epsilon_0$ here in order to be consistent with the notation in the rest of the paper} $\e \leq \e_0$,
 we have a Wegner estimate in $[E_0 - \eta, E_0 + \eta]$.
 Clearly, since the spectrum expands continuously with the disorder strength and since $\eta < \dist(E_0, \Sigma_0)/2$, the region of the Wegner estimate $[E_0 - \eta, E_0 + \eta]$ will contain no spectrum for small $\e$.
 Whether there is a parameter $\e \in (0, \e_0]$ such that region where the Wegner estimate holds contains any spectrum at all depends on the rate of expansion of the spectrum with respect to $\e$ and on the interplay of $\eta$, $E_0$, and $\e$.

In fact, in the proof of~\cite[Theorem~6.1 (a)]{HislopK-02} (text between Formulas (6.13) and (6.14))
once $E_0 < \min  \Sigma_0$ is chosen, the disorder strength $\e$ must satisfy
\begin{equation}\label{eq:HK-disorder}
 \e^2 \leqslant\e_0^2
 :=
 \frac{1 - 2 \eta / \dist(E_0, \Sigma_0)}{2 \lVert R_0(E_0)^{\frac{1}{2}} \cL_2 R_0(E_0)^{\frac{1}{2}} \rVert}
 =
 \frac{
 \eta_{\sup} - \eta}
 { 2 \eta_{\sup} \lVert R_0(E_0)^{\frac{1}{2}} \cL_2 R_0(E_0)^{\frac{1}{2}} \rVert}
\end{equation}
where $R_0(E_0) = (\Op^0 - E_0)^{-1}$.
If we choose $\eta$ close to $\eta_{\sup}$, we have $\e_0 \sim 0$,
such that $\sigma(\Op^\e(\omega))$ does not intersect  $[E_0 - \eta, E_0 + \eta]$, cf.~Figure~\ref{fig:expansion_of_spectrum}

\begin{figure}

 \begin{tikzpicture}

\draw[->, thick] (-6,0) -- (1.5,0);
\draw[->, thick] (0,0) -- (0,5);
\draw (.4,4.75) node {$\e$};
\draw (0,-.5) node {$\min \Sigma_{0}$};

\draw[black!80] (0,0) -- (-4,4);
\draw[black!80] (-5.5,4) -- (0,0) -- (-2.5,4);
\fill[pattern = dots, pattern color = black!40]  (-5.5,4) -- (0,0) -- (-2.5,4);

\draw (-4,4.25) node {\tiny $E_0(\e)$};
\draw (-5.5,4.25) node {\tiny $(E_0 - \eta)(\e)$};
\draw (-2.5,4.25) node {\tiny $(E_0 + \eta)(\e)$};


\draw[thick]
	(0,0) {[rounded corners = 20]--
	(-1.25,1.25)  --
	(-1.7,2.6)} --
	(-2.8,4);
	
\fill[pattern = north east lines, pattern color = black!40]
	(0,0) {[rounded corners = 20]--
	(-1.25,1.25)  --
	(-1.7,2.6)} --
	(-2.8,4) --
	(1,4) -- (1,0) -- (0,0);	


\fill[ thick, pattern = north west lines, pattern color = black]
	(0,0) {[rounded corners = 20]--
	(-1.25,1.25)  --
	(-1.7,2.6)} --
	(-2.8,4) --(-2.5,4) -- (0,0);

  \fill[pattern = dots, pattern color = black!40] (-6,-2.5) rectangle (-4,-1);
  \draw (-5,-3) node {Wegner holds here};
  \fill[pattern = north east lines, pattern color = black!40] (-3.5,-2.5) rectangle (-1.5,-1);
  \draw (-2.5,-3) node {$\Sigma_{\e}$};
  \fill[thick, pattern = north west lines, pattern color = black] (-1,-2.5) rectangle (1,-1);
  \draw (0,-2.8) node {Localization is};
  \draw (0,-3.2) node {meaningful here};
 \end{tikzpicture}

\caption{{The maximal possible expansion of the spectrum as a function of $\eta$ and the area where the Wegner estimate holds}}
\label{fig:expansion_of_spectrum}
\end{figure}
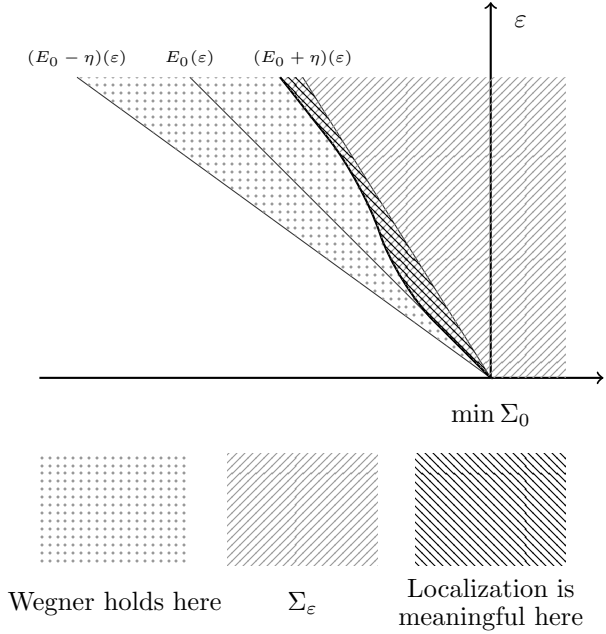

This shows that it is more natural to choose the interval $[E_0 - \eta, E_0 + \eta]$ depending on the disorder $\e$, in particular $E_0=E_0(\e)$ and $\eta = \eta(\e)$.
To make sure that this interval intersects $\Sigma_\e$ we need to have $\min \Sigma<E_0(\e)+\eta(\e)$.
Possibly adding a constant to $\Op^0$, we can assume w.l.o.g~$\min \Sigma_0=0$.
Since $\eta(\e) < \eta_{\sup}= |E_0(\e)|/2$ we have $E_0(\e)+\eta(\e) \leq E_0(\e)/2$.

Let us first consider the case that $\min\Sigma_\e$ decreases linearly for small $\e>0$, i.e.~there is an $c_{\ell}>0$ such that  $\min\Sigma_\e\leq -c_{\ell} \e$
(in analogy to Case (I) in the main body of the paper).
In order to ensure $E_0(\e)+\eta(\e) \in \Sigma_\e$,  we choose $E_0(\e) := -c_{\ell}\e$  and $\eta(\e) = \frac{c_{\ell}}{4}\e$.
Then
\[
\eta_{\sup}  = \dist(E_0(\e), 0)/2=\frac{c_{\ell}}{2}\e> \eta(\e)
\]
and, indeed, $E_0(\e)+\eta(\e) = -\frac{3c_{\ell}}{4}\e > -c_{\ell}\e\geq \min \Sigma_\e$.
Hence for this choice of $\eta(\e)$, in the light of~\eqref{eq:HK-disorder}, \cite[Theorem~6.1 (a)]{HislopK-02} allows disorder strengths
\[
 \e^2
 \leq
 \frac{1}{4 \lVert R_0(E_0(\e))^{\frac{1}{2}} \cL_2 R_0(E_0(\e))^{\frac{1}{2}} \rVert}.
\]
Let us bound the denominator, using representation (\ref{eq:A-L1_L2})
\begin{multline} \nonumber
\lVert R_0(E_0(\e))^{\frac{1}{2}} \cL_2 R_0(E_0(\e))^{\frac{1}{2}} \rVert
\leqslant
\lVert R_0(E_0(\e))^{\frac{1}{2}}\rVert^2 \lVert \cL_2 \rVert
=|E_0(\e)|^{-1} \lVert |A|^2 \rVert_\infty  
=\frac{\sup |A|^2 }{c_{\ell}\e}
\end{multline}
Thus Ineq.~\eqref{eq:HK-disorder} is satisfied if $\e^2 \leqslant    c_{\ell} \e/( 4\sup |A|^2)$.
This is true for sufficiently small $\e$.

\bigskip

Let us turn to the case that $\min\Sigma_\e$ decreases quadratically for small $\e>0$, i.e.~there is an $c_{q}>0$ such that  $\min\Sigma_\e\leq -c_{q} \e^2$
(in analogy to Case (II) in the main body of the paper).

In order to ensure $E_0(\e)+\eta(\e) \in \Sigma_\e$,  we choose $E_0(\e) := -c_{q}\epsilon^2$  and $\eta(\e) = \frac{c_{q}}{4}\epsilon^2$.
Then
\[
\eta_{\sup}  = \dist(E_0(\e), 0)/2=\frac{c_{q}}{2}\epsilon^2> \eta(\e)
\]
and, indeed, $E_0(\e)+\eta(\e) = -\frac{3c_{q}}{4}\epsilon^2 > -c_{q}\epsilon^2\geq \min \Sigma_\e$.
Hence for this choice of $\eta(\e)$, in the light of~\eqref{eq:HK-disorder}, \cite[Theorem~6.1 (a)]{HislopK-02} allows disorder strengths
\[
 \e^2
 \leq
 \frac{1}{4 \lVert R_0(E_0(\e))^{\frac{1}{2}} \cL_2 R_0(E_0(\e))^{\frac{1}{2}} \rVert}.
\]
Let us bound the denominator, using representation (\ref{eq:A-L1_L2})
\begin{multline} \nonumber
\lVert R_0(E_0(\e))^{\frac{1}{2}} \cL_2 R_0(E_0(\e))^{\frac{1}{2}} \rVert
\leqslant
\lVert R_0(E_0(\e))^{\frac{1}{2}}\rVert^2 \lVert \cL_2 \rVert
=|E_0(\e)|^{-1} \lVert |A|^2 \rVert_\infty  
=\frac{\sup |A|^2 }{c_{q}\epsilon^2}
\end{multline}
Thus Ineq.~\eqref{eq:HK-disorder} is satisfied if $\e^2 \leqslant    c_{q}\e^2 /( 4\sup |A|^2)$, i.~e.~$4\sup |A|^2 \leqslant    c_{q} $
To decide whether this condition holds, one needs to provide a lower bound on the expansion coefficient $c_{q}$.
\end{remark}

In any case, merely assuming small disorder is not sufficient to ensure that~\cite[Theorem~6.1 (a)]{HislopK-02} is a non-trivial statement. Additional arguments or assumptions are required.
To elucidate this further we want to exhibit situations where indeed we have linear expansion, i.e.~$\min\Sigma_\e\leq -c_{\ell} \e$  with $c_{\ell} >0$.


\begin{remark}
The operator (\ref{apB1}) does not fit the assumptions of the present paper.
Indeed, for $\e=0$ the unperturbed operator becomes $\Op^0=(\iu\nabla +A_0)^2$, possibly with $A_0\ne0$.
Nevertheless, the issue on how the spectrum expands can be analyzed in this situation, as well, by applying the general results of \cite{BorisovHEV-18}.
In this context Theorem \ref{th2.1} is replaced by Theorem 2.1 in \cite{BorisovHEV-18} still ensuring that there exist an almost sure spectral set $\Sigma_\e$.
To fit the setting of \cite{BorisovHEV-18}  we introduce some notation and assumptions.
We begin with the Floquet-Bloch expansion for the unperturbed operator, namely, we consider the operator $\Op^0_{per}:=(\iu\nabla +A_0)^2$
on the periodicity cell $\square$ subject to $\theta$-quasiperiodic boundary conditions on the (lateral) boundaries $\gamma$
and denote its lowest eigenvalue by $E_0(\theta)$.
We assume further that there exists a unique quasimomentum $\theta_0$ such that $E_0(\theta_0)=\min_\theta E_0(\theta)=\L_0 :=\min\Sigma_\e$,
that $\L_0$ is a simple eigenvalue of $\Op^0$ on $\square$ with $\theta_0$-periodic boundary conditions on $\g$ and $\Psi_0$ is the associated eigenfunction normalized in $L^2(\square)$.
Continuous dependence of the Floquet eigenvalues on the quasimomentum implies that there is closed ball $U$ around $\theta_0$ such that $\min_{\theta \in U}\dist(E_0(\theta), \sigma(\Op^0)\setminus \{E_0(\theta)\})>0$.
Such a scenario indeed occurs for electromagnetic Schr\"odinger operators, see e.g.~\cite{Shterenberg-05,FilonovK-18}.
Since here we focus on the effects of magnetic vector potentials we will assume in the following $V_0\equiv0$.
Assume also that the function $\varpi:=\frac{\Psi_0}{|\Psi_0|}$ belongs to $C^2(\overline{\square})$.

Then one can prove along the lines of \cite{BorisovHEV-18}
 that the bottom of the spectrum of the operator $\Op^\e(\omega)$ satisfies the asymptotic formula
\begin{equation}\label{apB2}
\inf\spec\big(\Op^\e(\omega)\big)\leqslant\L_0 + \e \min\{b \L_1, \L_1\} +O(\e^2),
\end{equation}
where the constant $\L_1$ is determined by a formula similar to (\ref{MFL1}):
\begin{equation}\label{apB3}
 \L_1
=
 \int\limits_{\square}  \overline{\Psi_0} \left[ (\iu\nabla+A_0) \cdot A + A \cdot (\iu\nabla+A_0) \right] \Psi_0 \drm x.
\end{equation}
We also note that if the function $\frac{1}{\Psi_0}\frac{\p\Psi_0}{\p\nu}$ belongs to $C^1(\p\square)$ then according Theorem~2.3 in  \cite{BorisovHEV-18}, the inequality in (\ref{apB2}) can be replaced by the identity.

Integrating by parts in (\ref{apB3}) and taking into consideration that $A$ vanishes on the boundary of $\square$, we obtain:
\begin{equation}\label{apB4}
 \L_1
=
 \int\limits_{\square}  \Big(\overline{\Psi_0 A} \cdot (\iu\nabla+A_0) +
\Psi_0 A  \cdot \overline{(\iu\nabla+A_0)\Psi_0}\Big) \drm x
=2\RE \big(\Psi_0 A,(\iu\nabla+A_0)\Psi_0 \big)_{L^2(\square)}.
\end{equation}
Thanks to the assumed smoothness of $A_0$, $A$ and $V_0$ and by the Schauder estimates, we have $\Psi_0\in C^2(\overline{\square})$. Then we can rewrite formula (\ref{apB4}) as
\begin{equation}\label{apB5}
\L_1=2\RE \big(A,\overline{\Psi_0}(\iu\nabla+A_0)\Psi_0 \big)_{L^2(\square)}.
\end{equation}
This representation for $\L_1$ shows that once $A_0$ is fixed, the vector function $\overline{\Psi_0}(\iu\nabla+A_0)\Psi_0$ is fixed as well, and we can vary the potential $A$ to achieve $\L_1\ne0$.
Let us prove this fact rigorously.

We are going to show that there exists at least one potential $A$ such the constant $\L_1$ defined by (\ref{apB5}) is non-zero. We argue by contradiction assuming that $\L_1$ vanishes for all $A$. Then formula (\ref{apB5}) implies that
\begin{equation}\label{apB6}
0\equiv \RE \overline{\Psi_0}(\iu\nabla+A_0)\Psi_0=-\IM \overline{\Psi_0}\nabla\Psi_0+ A_0|\Psi_0|^2=0\quad\text{in}\quad\square.
\end{equation}
Employing the definition of the function $\varpi$, it is straightforward to check that the above identity is rewritten equivalently as
\begin{equation}\label{apB7}
A_0=-\iu\overline{\varpi}\nabla \varpi.
\end{equation}
The right hand side in the above formula is a real-valued vector function. Indeed, since $\varpi\overline{\varpi}\equiv 1$, we easily confirm that
\begin{equation*}
\overline{\varpi}\nabla \varpi + \overline{\overline{\varpi}\nabla \varpi}=\overline{\varpi}\nabla \varpi + \varpi\nabla \overline{\varpi}=\nabla \varpi \overline{\varpi}\equiv 0
\end{equation*}
and hence, $\overline{\varpi}\nabla \varpi $ is a pure imaginary vector function.

On $L^2(\square)$, we introduce a unitary transformation by the formula
$\cU u:=\varpi u$, and $\cU^{-1}u=\overline{\varpi} u$. It is straightforward to check that the operator $\Op^0_{per}$ is unitary equivalent to
\begin{equation*}
\cU^{-1}\Op^0_{per}\cU=\big(\iu \nabla + \iu \overline{\varpi}\nabla \varpi +A_0\big)^2=-\D,
\end{equation*}
where the differential expressions in the right hand side are treated as operators in $L^2(\square)$ subject to the same boundary conditions as $\Op^0_{per}$. This means that $\L_1$  can vanish simultaneously for all $A$ only in the above discussed case $A_0=0$. Once $A_0$ describes a non-trivial magnetic field, there exists at least one potential $A$, for which $\L_1$ is non-zero. Once such potential $A$ is found and fixed, we can consider small variation of $A$ of the form $A+\d A$ and in view of the continuity of $\L_1$ in $A$, we conclude immediately that $\L_1$ is also non-zero for $A+\d A$ provide $\d A$ is small enough. Hence, there exist infinitely many examples of $A$, for which $\L_1$ is non-zero.

Moreover, given some $A_0$, $A$, for which $\L_1$ is non-zero, we easily see that replacing $A$ by $-A$, we change the sign of $\L_1$.
This means that in general, $\L_1$ is non-zero and can be both negative and positive. Hence, returning back to formula (\ref{apB2}), we conclude that in general,
the term of order $O(\e)$ in this formula is non-zero and can be both positive and negative.

Let us discuss the assumption $\frac{\Psi_0}{|\Psi_0|}=\varpi\in C^2(\overline{\square})$.
This is surely true once we know that $\Psi_0$ does not vanish in $\square$ and behaves ``well enough'' at the boundaries of $\square$.
Apriori, we failed trying to prove this fact for general $A_0$, but the set of potentials $A_0$ obeying this assumption is non-empty.
Indeed, we can start from $A_0\equiv 0$, then, as it was discussed in Appendix~\ref{appendix:V_0}, the ground state $\Psi_0$ is real and sign,
hence in this case the function $\varpi$ is identically constant: $\varpi\equiv 1$.
Now, consider a (sufficiently) small non-trivial magnetic potential $A_0$.
Its ground state will be a small perturbation of the one for $A_0\equiv0$, i.~e.~a (sufficiently) small variation of the function $\varpi\equiv 1$
The perturbed ground state will obviously still belong to $C^2(\overline{\square})$.
\end{remark}
%
%

\section*{Acknowledgments}

The scientific contribution by D.I.B. in the results presented in Sections~2,~4,~5,~7,~8 is financially supported by Russian Science Foundation (project No. 17-11-01004).


\def\polhk#1{\setbox0=\hbox{#1}{\ooalign{\hidewidth
  \lower1.5ex\hbox{`}\hidewidth\crcr\unhbox0}}}

\end{document}